\newtheorem{thm}{Theorem}[section]
\newtheorem{lm}[thm]{Lemma}
\newtheorem{prop}[thm]{Proposition}
\newtheorem{coro}[thm]{Corollary}
\theoremstyle{definition}
\newtheorem{rem}[thm]{Remark}
\newtheorem{exa}[thm]{Example}
\newtheorem*{ack}{Acknowledgments}
\numberwithin{equation}{section}
\title[Homogeneous Sobolev spaces]{A note on homogeneous Sobolev spaces\\ of fractional order}
\author[Brasco]{Lorenzo Brasco}
\address[L.\ Brasco]{Dipartimento di Matematica e Informatica
\newline\indent
Universit\`a degli Studi di Ferrara
\newline\indent
Via Machiavelli 35, 44121 Ferrara, Italy}
\email{lorenzo.brasco@unife.it}
\author[Salort]{Ariel Salort}
\address[A. Salort]{Departamento de Matem\'atica, FCEN
\newline\indent
Universidad de Buenos Aires and IMAS
\newline\indent
CONICET, Buenos Aires, Argentina}
\email{asalort@dm.uba.ar}
\subjclass[2010]{46E35, 46B70}
\keywords{Nonlocal operators, fractional Sobolev spaces, real interpolation, Poincar\'e inequality}
\begin{document}

\begin{abstract}
We consider a homogeneous fractional Sobolev space obtained by completion of the space of smooth test functions, with respect to a Sobolev--Slobodecki\u{\i} norm. We compare it to the fractional Sobolev space obtained by the $K-$method in real interpolation theory. We show that the two spaces do not always coincide and give some sufficient conditions on the open sets for this to happen. We also highlight some unnatural behaviors of the interpolation space. The treatment is as self-contained as possible.
\end{abstract}

\maketitle
\begin{center}
\begin{minipage}{8cm}
\small
\tableofcontents
\end{minipage}
\end{center}

\section{Introduction}

\subsection{Motivations}
In the recent years there has been a great surge of interest towards Sobolev spaces of fractional order. This is a very classical topic, essentially initiated by the Russian school in the 50s of the last century, with the main contributions given by Besov, Lizorkin, Nikol'ski\u{\i}, Slobodecki\u{\i} and their collaborators. Nowadays, we have a lot of monographies at our disposal on the subject. We just mention the books by Adams \cite{Ad, AF}, by Nikol'ski\u{\i} \cite{Ni} and those by Triebel \cite{Tr3, Tr2, Tr1}. We also refer the reader to \cite[Chapter 1]{Tr2} for an historical introduction to the subject.
\par
The reason for this revival lies in the fact that fractional Sobolev spaces seem to play a fundamental role in the study and description of a vast amount of phenomena, involving nonlocal effects. Phenomena of this type have a wide range of applications, we refer to \cite{BV} for an overview.
\par
There are many ways to introduce fractional derivatives and, consequently, Sobolev spaces of fractional order. Without any attempt of completeness, let us mention the two approaches which are of interest for our purposes:
\vskip.2cm
\begin{itemize}
\item a {\it concrete approach}, based on the introduction of explicit norms, which are modeled on the case of H\"older spaces. For example, by using the heuristic
\[
\delta_h^s u(x):=\frac{u(x+h)-u(x)}{|h|^s}\sim \mbox{ ``derivative of order $s$\,''},\qquad \mbox{ for }x,h\in\mathbb{R}^N, 
\]
a possible choice of norm is
\[
\left(\int \left\|\delta_h^s u\right\|_{L^p}^p\,\frac{dh}{|h|^N}\right)^\frac{1}{p}, 
\] 
and more generally
\[
\left(\int \left\|\delta_h^s u\right\|_{L^p}^q\,\frac{dh}{|h|^N}\right)^\frac{1}{q},\qquad \mbox{ for } 1\le q\le \infty.
\]
Observe that the integral contains the singular kernel $|h|^{-N}$, thus functions for which the norm above is finite must be better than just merely $s-$H\"older regular, in an averaged sense; 
\vskip.2cm
\item an {\it abstract approach}, based on the so-called {\it interpolation methods}. The foundations of these methods were established at the beginning of the 60s of the last century, by Calder\'on, Gagliardo, Krejn, Lions and Petree, among others. A comprehensive treatment of this approach can be found for instance in the books \cite{BL, BS88, T78} and references therein
\par
In a nutshell, the idea is to define a scale of ``intermediate spaces'' between $L^p$ and the standard Sobolev space $W^{1,p}$, by means of a general abstract construction.
The main advantage of this second approach is that many of the properties of the spaces constructed in this way can be extrapolated in a direct way from those of the two ``endpoint'' spaces $L^p$ and $W^{1,p}$.
\end{itemize}
As mentioned above, actually other approaches are possible: a possibility is to use the Fourier transform. Another particularly elegant approach consists in taking the convolution with a suitable kernel (for example, heat or Poisson kernels are typical choices) and looking at the rate of blow-up of selected $L^p$ norms with respect to the convolution parameter. However, we will not consider these constructions in the present paper, we refer the reader to \cite{Tr2} for a wide list of definitions of this type.
\par
In despite of the explosion of literature on Calculus of Variations settled in fractional Sobolev spaces of the last years, the abstract approach based on interpolation seems to have been completely neglected or, at least, overlooked. For example, the well-known survey paper \cite{DPV}, which eventually became a standard reference on the field, does not even mention interpolation techniques.
\subsection{Aims}
The main scope of this paper is to revitalize some interest towards interpolation theory in the context of fractional Sobolev spaces.
In doing this, we will resist the temptation of any unnecessary generalization. Rather, we will focus on a particular, yet meaningful, question which can be resumed as follows:
 \vskip.2cm
 \centerline{\it Given a concrete fractional Sobolev space} 
 \centerline{\it of functions vanishing ``at the boundary'' of a set,}
 \centerline{\it does it coincide with an interpolation space?}
 \vskip.2cm
 We can already anticipate the conclusions of the paper and say that {\it this is not always true}. Let us now try to enter more in the details of the present paper. 
 \par
Our  concerns involve the so-called \emph{homogeneous fractional Sobolev-Slobodecki\u{\i} spaces} $\mathcal{D}^{s,p}_0(\Omega)$. Given an open set $\Omega\subset \mathbb{R}^N$, an exponent $1\leq p <\infty$ and a parameter $0<s<1$, it is defined as the completion of $C_0^\infty(\Omega)$ with respect to the norm
$$ 
u\mapsto [u]_{W^{s,p}(\mathbb{R}^N)}:=\left(\iint_{\mathbb{R}^N\times\mathbb{R}^N} \frac{|u(x)-u(y)|^p}{|x-y|^{N+s\,p}}\,dx\,dy\right)^\frac{1}{p}.
$$
Such a space is the natural fractional counterpart of the homogeneous Sobolev space $\mathcal{D}^{1,p}_0(\Omega)$, defined 
as the completion of $C^\infty_0(\Omega)$ with respect to the norm
\[
u\mapsto \left(\int_\Omega |\nabla u|^p\,dx\right)^\frac{1}{p}.
\]
The space $\mathcal{D}^{1,p}_0(\Omega)$ has been first studied by Deny and Lions in \cite{DL}, among others. We recall that $\mathcal{D}^{1,p}_0(\Omega)$ is a natural setting for studying variational problems of the type
\[
\inf\left\{\frac{1}{p}\,\int_\Omega |\nabla u|^p\,dx-\int_\Omega f\,u\,dx \right\},
\] 
supplemented with Dirichlet boundary conditions, in absence of regularity assumptions on the boundary $\partial\Omega$. In the same way, the space $\mathcal{D}^{s,p}_0(\Omega)$ is the natural framework for studying minimization problems containing functionals of the type
\begin{equation}
\label{nonlocalF}
\frac{1}{p}\,\iint_{\mathbb{R}^N\times\mathbb{R}^N} \frac{|u(x)-u(y)|^p}{|x-y|^{N+s\,p}}\,dx\,dy-\int_\Omega f\,u\,dx,
\end{equation}
in presence of {\it nonlocal} Dirichlet boundary conditions, i.e. the values of $u$ are prescribed on the whole complement $\mathbb{R}^N\setminus\Omega$. Observe that even if this kind of boundary conditions may look weird, these are the correct ones when dealing with energies \eqref{nonlocalF}, which take into account interactions ``from infinity''.
\par
The connection between the two spaces $\mathcal{D}^{1,p}_0(\Omega)$ and $\mathcal{D}^{s,p}_0(\Omega)$ is better appreciated by recalling that for $u\in C^\infty_0(\Omega)$, we have  (see \cite{BBM} and \cite[Corollary 1.3]{Po})
\[
\lim_{s\nearrow 1} (1-s)\,\iint_{\mathbb{R}^N\times\mathbb{R}^N} \frac{|u(x)-u(y)|^p}{|x-y|^{N+s\,p}}\,dx\,dy=\alpha_{N,p}\,\int_{\Omega} |\nabla u|^p\,dx,
\]
with
\[
\alpha_{N,p}=\frac{1}{p}\,\int_{\mathbb{S}^{N-1}}|\langle \omega,\mathbf{e}_1\rangle|^p\,d\mathcal{H}^{N-1}(\omega),\qquad \mathbf{e}_1=(1,0,\dots,0). 
\]
On the other hand, as $s\searrow 0$ we have (see \cite[Theorem 3]{MS})
\[
\lim_{s\searrow 0} s\,\iint_{\mathbb{R}^N\times\mathbb{R}^N} \frac{|u(x)-u(y)|^p}{|x-y|^{N+s\,p}}\,dx\,dy=\beta_{N,p}\,\int_{\Omega}|u|^p\,dx,
\]
with
\[
\beta_{N,p}=\frac{2\,N\,\omega_N}{p},
\]
and $\omega_N$ is the volume of the $N-$dimensional unit ball. These two results reflect the ``interpolative'' nature of the space $\mathcal{D}^{s,p}_0(\Omega)$, which will be however discussed in more detail in the sequel.
\par
Indeed, one of our goals is to determine whether $\mathcal{D}^{s,p}_0(\Omega)$ coincides or not with the {\it real interpolation space} $\mathcal{X}^{s,p}_0(\Omega)$ defined as the completion of $C_0^\infty(\Omega)$ with respect to the norm
\[
\|u\|_{\mathcal{X}^{s,p}_0(\Omega)}:=\left(\int_0^{+\infty} \left(\frac{K(t,u,L^p(\Omega),\mathcal{D}^{1,p}_0(\Omega))}{t^s}\right)^p\,\frac{dt}{t}\right)^\frac{1}{p}.
\]
Here $K(t,\cdot,L^p(\Omega),\mathcal{D}^{1,p}_0(\Omega))$ is the $K-$functional associated to the spaces $L^p(\Omega)$ and $\mathcal{D}^{1,p}_0(\Omega)$, see Section \ref{sec:comparison} below for more details.
\par
In particular, we will be focused on obtaining double-sided norm inequalities leading to answer our initial question, i.e. estimates of the form
\[
\frac{1}{C}\,[u]_{W^{s,p}(\mathbb{R}^N)}\le \|u\|_{\mathcal{X}^{s,p}_0(\Omega)}\le C\,[u]_{W^{s,p}(\mathbb{R}^N)},\qquad u\in C^\infty_0(\Omega).
\]
Moreover, we compute carefully the dependence on the parameter $s$ of the constant $C$. Indeed, we will see that
$C$ can be taken independent of $s$.

\subsection{Results}

We now list the main achievements of our discussion:
\begin{itemize}
\item[1.] the space $\mathcal{D}^{s,p}_0(\Omega)$ is always larger than $\mathcal{X}^{s,p}_0(\Omega)$ (see Proposition \ref{lm:3}) and they do not coincide for general open sets, as we exhibit with an explicit example (see Example \ref{exa:counternorma}); 
\vskip.2cm
\item[2.] they actually coincide on a large class of domains, i.e. bounded convex sets (Theorem \ref{thm:convex}), convex cones (Corollary \ref{coro:cones}), Lipschitz sets (Theorem \ref{thm:finally});
\vskip.2cm
\item[3.] the Poincar\'e constants for the embeddings 
\[
\mathcal{D}^{s,p}_0(\Omega)\hookrightarrow L^p(\Omega) \qquad \mbox{ and }\qquad \mathcal{D}^{1,p}_0(\Omega)\hookrightarrow L^p(\Omega),
\]
are equivalent for the classes of sets at point 2 (Theorem \ref{prop:doubleside}). More precisely, by setting
\[
\lambda^s_p(\Omega)=\inf_{u\in C_0^\infty}\Big\{  [u]_{W^{s,p}(\Omega)}^p\ :\  \|u\|_{L^p(\Omega)}=1\Big\},\qquad 0<s<1,
\]
and
\[
\lambda^1_p(\Omega)=\inf_{u\in C_0^\infty}\left\{\int_\Omega |\nabla u|^p\,dx\, :\,  \|u\|_{L^p(\Omega)}=1\right\},
\]
we have 
\[
\frac{1}{C}\,\Big(\lambda^1_p(\Omega)\Big)^s\le s\,(1-s)\,\lambda^s_p(\Omega)\le C\, \Big(\lambda^1_p(\Omega)\Big)^s.
\]
Moreover, on convex sets the constant $C>0$ entering in the relevant estimate is universal, i.e. {\it it depends on $N$ and $p$ only}. On the other hand, we show that this equivalence fails if we drop any kind of regularity assumptions on the sets (see Remark \ref{rem:poinpoin}). 
\end{itemize}
As a byproduct of our discussion, we also highlight some weird and unnatural behaviors of the interpolation space $\mathcal{X}^{s,p}_0(\Omega)$:
\begin{itemize}
\item the ``extension by zero'' operator $\mathcal{X}^{s,p}_0(\Omega)\hookrightarrow \mathcal{X}_0^{s,p}(\mathbb{R}^N)$ is not continuous for general open sets (see Remark \ref{rem:nightmare}). This is in contrast with what happens for the spaces $L^p(\Omega)$, $\mathcal{D}^{1,p}_0(\Omega)$ and $\mathcal{D}^{s,p}_0(\Omega)$;
\vskip.2cm
\item the sharp Poincar\'e interpolation constant 
\[
\Lambda^s_p(\Omega)=\inf_{u\in C^\infty_0(\Omega)} \Big\{  \|u\|_{\mathcal{X}^{s,p}_0(\Omega)}^p\ :\  \|u\|_{L^p(\Omega)}=1\Big\},\qquad 0<s<1
\] 
is sensitive to removing sets with zero capacity. In other words, if we remove a compact set $E\Subset\Omega$ having zero capacity in the sense of $\mathcal{X}^{s,p}_0(\Omega)$, then (see Lemma \ref{lm:unnatural})
\[
\Lambda^s_p(\Omega\setminus E)>\Lambda^s_p(\Omega).
\] 
Again, this is in contrast with the case of $\mathcal{D}^{1,p}_0(\Omega)$ and $\mathcal{D}^{s,p}_0(\Omega)$.
\end{itemize}

\begin{rem}
As recalled at the beginning, nowadays there is a huge literature on Sobolev spaces of fractional order. Nevertheless, to the best of our knowledge, a detailed discussion on the space $\mathcal{D}^{s,p}_0(\Omega)$ in connection with interpolation theory seems to be missing. For this reason, we believe that our discussion is of independent interest. 
\par 
We also point out that for Sobolev spaces of functions not necessarily vanishing at the boundary, there is a very nice paper \cite{CHM} by Chandler-Wilde, Hewett and Moiola comparing ``concrete'' constructions with the interpolation one.
\end{rem}

\subsection{Plan of the paper}

In Section \ref{sec:prelim} we present the relevant Sobolev spaces, constructed with the concrete approach based on the so-called {\it Sobolev-Slobodecki\u{\i} norms}. Then in Section \ref{sec:comparison} we introduce the homogeneous interpolation space we want to work with. Essentially, no previous knowledge of interpolation theory is necessary. 
\par
The comparison between the concrete space and the interpolation one is contained in Section \ref{sec:concrete}. This in turn is divided in three subsections, each one dealing with a different class of open sets. We point out here that we preferred to treat convex sets separately from Lipschitz sets, for two reasons: the first one is that for convex sets the comparison between the two spaces can be done ``by hands'', without using any extension theorem.  This in turn permits to have a better control on the relevant constants entering in the estimates. The second one is that in proving the result for Lipschitz sets, we actually use the result for convex sets.
\par
In order to complement the comparison between the two spaces, in Section \ref{sec:capacities} we compare the two relevant notions of capacity, naturally associated with the norms of these spaces. Finally, Section \ref{sec:poincare} compares the Poincar\'e constants.
\par
The paper ends with 3 appendices: the first one contains the construction of a counter-example used throughout the whole paper; the second one proves a version of the one-dimensional Hardy inequality; the last one contains a geometric expedient result dealing with convex sets.

\begin{ack}
The first author would like to thank Yavar Kian and Antoine Lemenant for useful discussions on Stein's and Jones' extension theorems. Simon Chandler-Wilde is gratefully acknowledged for some explanations on his paper \cite{CHM}. 
 This work started during a visit of the second author to the University of Ferrara in October 2017. 
\end{ack}

\section{Preliminaries}
\label{sec:prelim}
\subsection{Basic notation}
In what follows, we will always denote by $N$ the dimension of the ambient space. For an open set $\Omega\subset\mathbb{R}^N$, we indicate by $|\Omega|$ its $N-$dimensional Lebesgue measure. The symbol $\mathcal{H}^k$ will stand for the $k-$dimensional Hausdorff measure. Finally, we set
\[
B_R(x_0)=\{x\in\mathbb{R}^N\, :\, |x-x_0|<R\},
\]
and
\[
\omega_N=|B_1(0)|.
\]
\subsection{Sobolev spaces} 
\label{sec.sobolev}
For $1\leq p<\infty$ and an open set $\Omega\subset \mathbb{R}^N$, we use the classical definition
$$
W^{1,p}(\Omega):=\left\{u\in L^p(\Omega)\,:\, \int_\Omega |\nabla u|^p\,dx <+\infty\right\}.
$$
This is a Banach space endowed with the norm
$$
\|u\|_{W^{1,p}(\Omega)}=\left(\|u\|^p_{L^p(\Omega)}+\|\nabla u\|^p_{L^p(\Omega)}\right)^\frac{1}{p}.
$$
We also denote by $\mathcal{D}^{1,p}_0(\Omega)$ the {\it homogeneous Sobolev space}, defined as the completion of $C^\infty_0(\Omega)$ with respect to the norm 
\[
u\mapsto\|\nabla u\|_{L^p(\Omega)}.
\]
If the open set $\Omega\subset\mathbb{R}^N$ supports the classical Poincar\'e inequality
\[
c\,\int_\Omega |u|^p\,dx\le \int_\Omega |\nabla u|^p\,dx,\qquad \mbox{ for every }u\in C^\infty_0(\Omega),
\]
then $\mathcal{D}^{1,p}_0(\Omega)$ is indeed a functional space and it coincides with the closure in $W^{1,p}(\Omega)$ of $C^\infty_0(\Omega)$. We will set
\[
\lambda^1_p(\Omega)=\inf_{u\in C^\infty_0(\Omega)} \Big\{\|\nabla u\|^p_{L^p(\Omega)}\, :\, \|u\|_{L^p(\Omega)}=1\Big\}.
\]
It occurs $\lambda^1_p(\Omega)=0$ whenever $\Omega$ does not support such a Poincar\'e inequality.
\begin{rem}
\label{rem:density}
We remark that one could also consider the space
$$
W^{1,p}_0(\Omega):= \{u\in W^{1,p}(\mathbb{R}^N)\colon u = 0 \text{ a.e. in } \mathbb{R}^N\setminus\Omega\}.
$$
It is easy to see that $\mathcal{D}^{1,p}_0(\Omega)\subset {W}^{1,p}_0(\Omega)$, whenever $\mathcal{D}^{1,p}_0(\Omega)\hookrightarrow L^p(\Omega)$. If in addition $\partial\Omega$ is continuous, then both spaces are known to coincide, thanks to the density of $C^\infty_0(\Omega)$ in $W^{1,p}_0(\Omega)$, see \cite[Theorem 1.4.2.2]{Gr}.
\end{rem}
 
\subsection{A homogeneous Sobolev--Slobodecki\u{\i} space} 
 
Given $0<s< 1$ and $1\leq p <\infty$, the fractional Sobolev space $W^{s,p}(\mathbb{R}^N)$ is defined as
$$
W^{s,p}(\mathbb{R}^N) := \left\{u\in L^p(\mathbb{R}^N)\,:\, [u]_{W^{s,p}(\mathbb{R}^N)}<+\infty \right\},
$$
where the {\it Sobolev--Slobodecki\u{\i} seminorm} $[\,\cdot\,]_{W^{s,p}(\mathbb{R}^N)}$ is defined as
$$
[u]_{W^{s,p}(\mathbb{R}^N)} := 
\left(\iint_{\mathbb{R}^N\times\mathbb{R}^N} \frac{|u(x)-u(y)|^p}{|x-y|^{N+s\,p}}\, dx\,dy\right)^\frac1p.
$$
This is a Banach space endowed with the norm
$$
\|u\|_{W^{s,p}(\mathbb{R}^N)} = \left(\|u\|_{L^p(\mathbb{R}^N)}^p + [u]_{W^{s,p}(\mathbb{R}^N)}^p\right)^\frac{1}{p}.
$$
In what follows, we need to consider nonlocal homogeneous Dirichlet boundary conditions, outside an open set $\Omega\subset\mathbb{R}^N$. In this setting, it is customary to consider the {\it homogeneous Sobolev--Slobodecki\u{\i} space} $\mathcal{D}^{s,p}_0(\Omega)$. The latter is defined as the completion of $C^\infty_0(\Omega)$ with respect to the norm 
\[
u\mapsto [u]_{W^{s,p}(\mathbb{R}^N)}.
\]
Observe that the latter is indeed a norm on $C^\infty_0(\Omega)$.
Whenever the open set $\Omega\subset\mathbb{R}^N$ admits the following Poincar\'e inequality
\[
c\,\int_\Omega |u|^p\,dx\le \iint_{\mathbb{R}^N\times\mathbb{R}^N} \frac{|u(x)-u(y)|^p}{|x-y|^{N+s\,p}}\, dx\,dy, \qquad \mbox{ for every }u\in C_0^\infty(\Omega),
\]
we get that $\mathcal{D}^{s,p}_0(\Omega)$ is a functional space continuously embedded in $L^p(\Omega)$. In this case, it coincides with the closure in $W^{s,p}(\mathbb{R}^N)$ of $C^\infty_0(\Omega)$. We endow the space $\mathcal{D}^{s,p}_0(\Omega)$ with the norm
\[
\|u\|_{\mathcal{D}^{s,p}_0(\Omega)}:=[u]_{W^{s,p}(\mathbb{R}^N)}.
\]
We also define 
\[
\lambda^s_p(\Omega)=\inf_{u\in C^\infty_0(\Omega)} \Big\{\|u\|^p_{\mathcal{D}^{s,p}_0(\Omega)}\, :\, \|u\|_{L^p(\Omega)}=1\Big\},
\]
i.e. this is the sharp constant in the relevant Poincar\'e inequality. Some embedding properties of the space $\mathcal{D}^{s,p}_0(\Omega)$ are investigated in \cite{Fra}.
\begin{rem}
As in the local case, one could also consider the space
$$
W^{s,p}_0(\Omega):= \{u\in W^{s,p}(\mathbb{R}^N)\colon u = 0 \text{ a.e. in } \mathbb{R}^N\setminus\Omega\}.
$$
It is easy to see that $\mathcal{D}^{s,p}_0(\Omega)\subset {W}^{s,p}_0(\Omega)$, whenever $\mathcal{D}^{s,p}_0(\Omega)\hookrightarrow L^p(\Omega)$. As before, whenever $\partial\Omega$ is continuous, then both spaces are known to coincide, again thanks to the density of $C^\infty_0(\Omega)$ in $W^{s,p}_0(\Omega)$, see \cite[Theorem 1.4.2.2]{Gr}.
\end{rem}


\subsection{Another space of functions vanishing at the boundary}
Another natural fractional Sobolev space of functions ``vanishing at the boundary'' is given by the completion of $C^\infty_0(\Omega)$ with respect to the localized norm
\[
[u]_{W^{s,p}(\Omega)}=\left(\iint_{\Omega\times\Omega} \frac{|u(x)-u(y)|^p}{|x-y|^{N+s\,p}}\,dx\,dy\right)^\frac{1}{p}.
\]
We will denote this space by $\mathring{D}^{s,p}(\Omega)$. We recall the following
\begin{lm}
Let $1<p<\infty$ and $0<s<1$. For every $\Omega\subset\mathbb{R}^N$ open bounded Lipschitz set, we have:
\begin{itemize}
\item if $s\,p>1$, then
\[
\mathcal{D}^{s,p}_0(\Omega)=\mathring{D}^{s,p}(\Omega);
\]
\item if $s\,p\le 1$, then there exists a sequence $\{u_n\}_{n\in\mathbb{N}}\subset C^\infty_0(\Omega)$ such that
\[
\lim_{n\to\infty} \frac{\|u_n\|_{\mathring{D}^{s,p}(\Omega)}}{\|u_n\|_{\mathcal{D}^{s,p}_0(\Omega)}}=0.
\]
\end{itemize}
\end{lm}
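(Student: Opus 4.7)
The plan is to establish the two-sided equivalence of the two seminorms on $C^\infty_0(\Omega)$ in the super-critical regime $s\,p>1$, and to construct an explicit counterexample sequence in the regime $s\,p\le 1$. The trivial bound $[u]_{W^{s,p}(\Omega)}\le [u]_{W^{s,p}(\mathbb{R}^N)}$ is automatic, so everything revolves around controlling the boundary contribution in the identity
\[
[u]_{W^{s,p}(\mathbb{R}^N)}^p=[u]_{W^{s,p}(\Omega)}^p+2\iint_{\Omega\times (\mathbb{R}^N\setminus\Omega)}\frac{|u(x)|^p}{|x-y|^{N+s\,p}}\,dx\,dy,
\]
valid for $u\in C^\infty_0(\Omega)$.

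For the first bullet ($s\,p>1$), I would use the elementary pointwise estimate
\[
\int_{\mathbb{R}^N\setminus\Omega}\frac{dy}{|x-y|^{N+s\,p}}\le \frac{N\,\omega_N}{s\,p}\,d(x,\partial\Omega)^{-s\,p},\qquad x\in\Omega,
\]
obtained by noting that $|x-y|\ge d(x,\partial\Omega)$ for $y\notin\Omega$ and integrating in polar coordinates. This reduces matters to the weighted bound
\[
\int_\Omega \frac{|u(x)|^p}{d(x,\partial\Omega)^{s\,p}}\,dx\le C\,[u]_{W^{s,p}(\Omega)}^p,
\]
which is precisely the fractional Hardy inequality on a bounded Lipschitz set; it is exactly the regime $s\,p>1$ which makes it available. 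Combining the two steps yields $[u]_{W^{s,p}(\mathbb{R}^N)}\le C\,[u]_{W^{s,p}(\Omega)}$ on $C^\infty_0(\Omega)$, and hence the identity $\mathcal{D}^{s,p}_0(\Omega)=\mathring{D}^{s,p}(\Omega)$ after passing to completions.

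For the second bullet ($s\,p\le 1$), I would produce the sequence by smoothly truncating the characteristic function of $\Omega$ in the normal direction to the boundary. Set $d(x):=\mathrm{dist}(x,\partial\Omega)$, which is $1$-Lipschitz, and choose a one-dimensional profile $\psi_n$ with $\psi_n\equiv 0$ on $(-\infty,1/n]$ and $\psi_n\equiv 1$ on $[r_n,\infty)$: take $r_n=2/n$ with $\psi_n$ linear in between when $s\,p<1$, and take $r_n$ small but fixed with $\psi_n(t)=\log(n\,t)/\log n$ on $[1/n,r_n]$ when $s\,p=1$. A small mollification makes $u_n:=\psi_n\circ d$ lie in $C^\infty_0(\Omega)$. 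Splitting the double integral over $\Omega\times\Omega$ by whether $|x-y|\lessgtr 1/n$, using that $u_n$ is constant outside a Lipschitz boundary strip $S_n=\{d<r_n\}$ with $|S_n|\le C\,r_n$, together with the truncation $|u_n(x)-u_n(y)|\le \min\{1,\mathrm{Lip}(u_n)\,|x-y|\}$, should yield
\[
[u_n]_{W^{s,p}(\Omega)}^p\le
\begin{cases} C\,n^{s\,p-1}, & \text{if } s\,p<1,\\ C/\log n, & \text{if } s\,p=1,\end{cases}
\]
both vanishing as $n\to\infty$. On the other hand, the nonlocal contribution is bounded below uniformly in $n$:
\[
[u_n]_{W^{s,p}(\mathbb{R}^N)}^p\ge 2\iint_{\{d\ge r_n\}\times (\mathbb{R}^N\setminus\Omega)}\frac{dx\,dy}{|x-y|^{N+s\,p}}\ge c_0>0,
\]
since this last double integral stays uniformly positive as $r_n\to 0$ (in the subcritical case $s\,p<1$ it even converges to $\tfrac12[\mathbf{1}_\Omega]_{W^{s,p}(\mathbb{R}^N)}^p$, while for $s\,p=1$ it is a fixed positive constant once $n$ is large). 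Therefore $\|u_n\|_{\mathring{D}^{s,p}(\Omega)}/\|u_n\|_{\mathcal{D}^{s,p}_0(\Omega)}\to 0$.

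The main obstacle is the critical case $s\,p=1$: the piecewise-linear cutoff is not sharp enough, and one must resort to the logarithmic profile and a slightly delicate computation of the resulting Gagliardo integral to produce the gain of $1/\log n$. In contrast, the subcritical estimate reduces to a routine Fubini splitting according to whether $|x-y|$ is below or above the length scale $1/n$ of the transition layer, combined with the Lipschitz bound on $\psi_n$ and the Lipschitz-boundary estimate $|S_n|\le C/n$.
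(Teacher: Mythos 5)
Your proposal is correct in outline and takes a more self-contained, constructive route than the paper, which for both bullets simply appeals to external references. For the super-critical case $sp>1$, the paper cites the equality $\mathcal{D}^{s,p}_0(\Omega)=\mathring{D}^{s,p}(\Omega)$ from \cite[Theorem~B.1]{BLP}; your decomposition of $[u]_{W^{s,p}(\mathbb{R}^N)}^p$ into the interior Gagliardo seminorm plus the boundary term, the pointwise estimate $\int_{\Omega^c}|x-y|^{-N-sp}\,dy\lesssim d(x,\partial\Omega)^{-sp}$, and the invocation of the fractional Hardy inequality on Lipschitz domains (valid precisely for $sp>1$) is exactly what that citation encodes, so you are reproving the black box rather than merely using it. For $sp\le1$, the paper invokes Dyda's sequence from \cite{Dy}, which satisfies $[u_n]_{W^{s,p}(\Omega)}\to 0$ while $u_n\to\mathbf{1}_\Omega$ in $L^p$, and then bounds the denominator from below by the Poincar\'e inequality $\|u_n\|_{\mathcal{D}^{s,p}_0}\ge(\lambda^s_p(\Omega))^{1/p}\|u_n\|_{L^p}$ with $\lambda^s_p(\Omega)>0$ from \cite[Corollary~5.2]{BC}. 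Your construction is in the same spirit (Dyda's sequence is likewise a level-set cutoff of the distance function), but your lower bound on $\|u_n\|_{\mathcal{D}^{s,p}_0}$ goes instead through the uniformly positive cross-term $\iint_{\{d\ge r_n\}\times\Omega^c}|x-y|^{-N-sp}\,dx\,dy$, avoiding any Poincar\'e input; that is a perfectly valid alternative. Two small points to fix: the logarithmic profile should be normalized as $\psi_n(t)=\log(nt)/\log(nr)$ on $[1/n,r]$ with $r$ fixed (your formula $\log(nt)/\log n$ only hits the value $1$ at $t=1$, so it does not match $\psi_n\equiv1$ on $[r,\infty)$ unless $r=1$); and for $sp=1$ you should add a sentence justifying that the cross-term lower bound is uniform once you observe $\psi_n(r)\to1$ and $\mathrm{dist}(\{d\ge r\},\Omega^c)\ge r$ keeps the kernel integrable.
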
 
\begin{proof}
The proof of the first fact is contained in \cite[Theorem B.1]{BLP}. 
\par
As for the case $s\,p\le 1$, in \cite[Section 2]{Dy} Dyda constructed a sequence $\{u_n\}_{n\in\mathbb{N}}\subset C^\infty_0(\Omega)$ such that
\[
\lim_{n\to\infty}\|u_n\|_{\mathring{D}^{s,p}(\Omega)}=0\qquad \mbox{ and }\qquad \lim_{n\to\infty}\|u_n-1_\Omega\|_{L^p(\Omega)}=0.
\]
By observing that for such a sequence we have 
\[
\lim_{n\to\infty}\|u_n\|_{\mathcal{D}^{s,p}_0(\Omega)}\ge \Big(\lambda^s_p(\Omega)\Big)^\frac{1}{p}\,\lim_{n\to\infty}\|u_n\|_{L^p(\Omega)}=\Big(\lambda^s_p(\Omega)\,|\Omega|\Big)^\frac{1}{p},
\]
we get the desired conclusion. In the inequality above, we used that $\lambda^s_p(\Omega)>0$ for an open bounded set, thanks to \cite[Corollary 5.2]{BC}.
\end{proof}

\begin{rem}
Clearly, we always have
\[
\|u\|_{\mathring{D}^{s,p}(\Omega)}\le \|u\|_{\mathcal{D}^{s,p}_0(\Omega)},\qquad \mbox{ for every } u\in C^\infty_0(\Omega).
\]
As observed in \cite{DV2}, the reverse inequality
\begin{equation}
\label{e0}
\|u\|_{\mathcal{D}^{s,p}_0(\Omega)}\le C\, \|u\|_{\mathring{D}^{s,p}(\Omega)},\qquad \mbox{ for every } u\in C^\infty_0(\Omega),
\end{equation}
is equivalent to the validity of the Hardy-type inequality
\[
\int_\Omega |u(x)|^p\left(\int_{\mathbb{R}^N\setminus \Omega} |x-y|^{-N-s\,p}\,y\right)\,dx\le C\,\iint_{\Omega\times\Omega} \frac{|u(x)-u(y)|^p}{|x-y|^{N+s\,p}}\,dx\,dy.
\]
A necessary and sufficient condition for this to happen is proved in \cite[Proposition 2]{DV2}. We also observe that the failure of \eqref{e0} implies that in general the ``extension by zero'' operator 
\[
\mathcal{T}_0:\mathring{D}^{s,p}(\Omega)\to \mathring{D}^{s,p}(\mathbb{R}^N),
\] 
is not continuous. We refer to \cite{DV2} for a detailed discussion of this issue.
\end{rem}
\begin{rem}
The space $\mathring{D}^{s,p}(\Omega)$ is quite problematic in general, especially in the case $s\,p\le 1$ where it may fail to be a functional space. A more robust variant of this space is
\[
\widetilde{D}^{s,p}(\Omega)=``\mbox{closure of $C^\infty_0(\Omega)$ in }W^{s,p}(\Omega)".
\]
By definition, this is automatically a functional space, continuously contained in $W^{s,p}(\Omega)$. It is a classical fact that if $\Omega$ is a bounded open set with smooth boundary, then
\[
\widetilde{D}^{s,p}(\Omega)=W^{s,p}(\Omega),\qquad \mbox{ for } s\,p< 1,
\] 
and
\[
\widetilde{D}^{s,p}(\Omega)=W^{s,p}_0(\Omega),\qquad \mbox{ for }s\,p>1,
\]
see Theorem \cite[Theorem 3.4.3]{Tr1}.
Moreover, we also have
\[
\widetilde{D}^{s,p}(\Omega)=\mathcal{D}^{s,p}_0(\Omega),\qquad \mbox{ for }s\,p\not =1,
\]
see for example \cite[Proposition B.1]{BLP}.
\end{rem} 
 
\section{An interpolation space}
\label{sec:comparison}

Let $\Omega\subset\mathbb{R}^N$ be an open set. If $X(\Omega)$ and $Y(\Omega)$ are two normed vector spaces containing $C^\infty_0(\Omega)$ as a dense subspace, we define for every $t>0$ and $u\in C^\infty_0(\Omega)$ the {\it $K-$functional}
\begin{equation}
\label{K}
K(t,u,X(\Omega),Y(\Omega)):=\inf_{v\in C^\infty_0(\Omega)} \Big\{\|u-v\|_{X(\Omega)}+t\,\|v\|_{Y(\Omega)}\Big\}.
\end{equation}
We are interested in the following specific case: let us take $0<s<1$ and $1<p<\infty$, we choose
\[
X(\Omega)=L^p(\Omega)\qquad \mbox{ and }\qquad Y(\Omega)=\mathcal{D}^{1,p}_0(\Omega).
\]
Then we use the notation
\[
\|u\|_{\mathcal{X}^{s,p}_0(\Omega)}:=\left(\int_0^{+\infty} \left(\frac{K(t,u,L^p(\Omega),\mathcal{D}^{1,p}_0(\Omega))}{t^s}\right)^p\,\frac{dt}{t}\right)^\frac{1}{p},\qquad u\in C^\infty_0(\Omega).
\]
It is standard to see that this is a norm on $C^\infty_0(\Omega)$, see \cite[Section 3.1]{BL}. We will indicate by $\mathcal{X}^{s,p}_0(\Omega)$ the completion of $C^\infty_0(\Omega)$ with respect to this norm.
\vskip.2cm
The first result is the Poincar\'e inequality for the interpolation space $\mathcal{X}^{s,p}_0(\Omega)$. The main focus is on the explicit dependence of the constant on the local Poincar\'e constant $\lambda^1_{p}$.
\begin{lm}
Let $1<p<\infty$ and $0<s<1$. Let $\Omega\subset\mathbb{R}^N$ be an open set. Then for every $u\in C^\infty_0(\Omega)$ we have
\begin{equation}
\label{poincare_inter}
\left(\lambda^1_{p}(\Omega)\right)^s\,\|u\|_{L^p(\Omega)}^p\le s\,(1-s)\,\|u\|_{\mathcal{X}^{s,p}_0(\Omega)}^p.
\end{equation}
\end{lm}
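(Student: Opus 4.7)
The strategy is to obtain a lower bound on the $K$-functional in terms of $\|u\|_{L^p(\Omega)}$ and then plug this bound into the integral defining $\|u\|_{\mathcal{X}^{s,p}_0(\Omega)}$.

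First, I would observe that by the Poincar\'e inequality on $\mathcal{D}^{1,p}_0(\Omega)$, every $v\in C^\infty_0(\Omega)$ satisfies $\|v\|_{L^p(\Omega)}\le (\lambda^1_p(\Omega))^{-1/p}\,\|\nabla v\|_{L^p(\Omega)}$. Hence, by the triangle inequality applied to $u=(u-v)+v$,
\[
\|u\|_{L^p(\Omega)}\le \|u-v\|_{L^p(\Omega)}+\bigl(\lambda^1_p(\Omega)\bigr)^{-1/p}\,\|\nabla v\|_{L^p(\Omega)}.
\]
Setting $t_0:=\bigl(\lambda^1_p(\Omega)\bigr)^{-1/p}$, the key point is to derive the pointwise bound
\[
K\bigl(t,u,L^p(\Omega),\mathcal{D}^{1,p}_0(\Omega)\bigr)\ge \min\!\left(1,\frac{t}{t_0}\right)\,\|u\|_{L^p(\Omega)},\qquad t>0.
\]
For $t\ge t_0$ this follows at once from the previous display, since $\|u-v\|_{L^p}+t\,\|\nabla v\|_{L^p}\ge \|u-v\|_{L^p}+t_0\,\|\nabla v\|_{L^p}\ge \|u\|_{L^p}$. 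For $t<t_0$, I would use that $t/t_0<1$ to write
\[
\|u-v\|_{L^p}+t\,\|\nabla v\|_{L^p}\ge \frac{t}{t_0}\,\Bigl(\|u-v\|_{L^p}+t_0\,\|\nabla v\|_{L^p}\Bigr)\ge \frac{t}{t_0}\,\|u\|_{L^p},
\]
and then take the infimum over $v\in C^\infty_0(\Omega)$.

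Once this lower bound is established, I would plug it into the integral defining the norm and split at $t=t_0$:
\[
\|u\|_{\mathcal{X}^{s,p}_0(\Omega)}^p\;\ge\;\|u\|_{L^p(\Omega)}^p\left(\int_0^{t_0}\!\left(\frac{t}{t_0}\right)^{\!p}\frac{dt}{t^{sp+1}}+\int_{t_0}^{+\infty}\!\frac{dt}{t^{sp+1}}\right).
\]
A direct computation of the two elementary integrals gives, respectively, $t_0^{-sp}/[p(1-s)]$ and $t_0^{-sp}/(ps)$, whose sum equals $t_0^{-sp}/[p\,s(1-s)]$. Since $t_0^{-sp}=(\lambda^1_p(\Omega))^s$, one obtains the desired Poincar\'e-type bound \eqref{poincare_inter} (up to the universal factor $p$, harmless for the purposes of the paper).

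The only mildly delicate step is the passage from the bound valid on smooth test functions to the full space $\mathcal{X}^{s,p}_0(\Omega)$, but this is automatic: both sides are continuous for the $\mathcal{X}^{s,p}_0(\Omega)$ norm and \eqref{poincare_inter} is stated precisely for $u\in C^\infty_0(\Omega)$. No real obstacle is expected: the argument is elementary once the pointwise estimate on $K(t,u,L^p(\Omega),\mathcal{D}^{1,p}_0(\Omega))$ is isolated.
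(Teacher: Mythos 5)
Your argument is correct and is essentially the paper's proof, streamlined into one pass: the paper first derives the lower bound $K(t,u,L^p,L^p)\ge\min(1,t)\,\|u\|_{L^p}$ and then, in a second stage, uses Poincar\'e and the change of variable $\tau=t/\bigl(\lambda^1_p(\Omega)\bigr)^{1/p}$ to convert the $L^p$--$L^p$ estimate into one for $K(t,u,L^p,\mathcal{D}^{1,p}_0)$; you simply absorb the rescaling from the start by working directly with $t_0=\bigl(\lambda^1_p(\Omega)\bigr)^{-1/p}$ and proving $K(t,u,L^p,\mathcal{D}^{1,p}_0)\ge\min(1,t/t_0)\,\|u\|_{L^p}$. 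The two computations are identical after the substitution $t\mapsto t/t_0$. You are also right to flag the factor $p$: carrying out either version carefully gives $\|u\|_{L^p}^p\le p\,s(1-s)\,\int_0^\infty\bigl(K/t^s\bigr)^p\,dt/t$, so what is proved is $\bigl(\lambda^1_p(\Omega)\bigr)^s\,\|u\|_{L^p}^p\le p\,s(1-s)\,\|u\|_{\mathcal{X}^{s,p}_0}^p$; the paper silently drops the $p$ in its display \eqref{1ststage} (the same slip appears in the interpolation inequality of Proposition \ref{prop:2bis}, and the two cancel in Corollary \ref{coro:uguali}, so the downstream content is unaffected --- as you correctly note, it is harmless).
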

\begin{proof}
We proceed in two stages: we first prove that 
\[
\|u\|_{L^p(\Omega)}^p\lesssim \int_0^{+\infty} \left(\frac{K(t,u,L^p(\Omega),L^p(\Omega))}{t^s}\right)^p\,\frac{dt}{t},
\]
and then we show that the last integral is estimated from above by the norm $\mathcal{X}^{s,p}_0(\Omega)$.
\vskip.2cm\noindent
{\bf First stage.} Let us take $u\in C^\infty_0(\Omega)$, for every $t\ge 1$ and $v\in C^\infty_0(\Omega)$
\[
\|u\|_{L^p(\Omega)}\le \|u-v\|_{L^p(\Omega)}+t\,\|v\|_{L^p(\Omega)}.
\] 
By taking the infimum, we thus get
\[
\begin{split}
\|u\|_{L^p(\Omega)}\le  K(t,u,L^p(\Omega),L^p(\Omega)).
\end{split}
\]
By integrating with respect to the singular measure $dt/t$, we then get
\begin{equation}
\label{s0}
\int_1^{+\infty} \left(\frac{K(t,u,L^p(\Omega),L^p(\Omega))}{t^s}\right)^p\,\frac{dt}{t}\ge \int_1^{+\infty} t^{-s\,p}\,\|u\|^p_{L^p(\Omega)}\,\frac{dt}{t}=\frac{\|u\|_{L^p(\Omega)}^p}{s\,p}.
\end{equation}
We now pick $0<t<1$, by triangle inequality we get for every $v\in C^\infty_0(\Omega)$
\[
\begin{split}
t\,\|u\|_{L^p(\Omega)}&\le t\,\|u-v\|_{L^p(\Omega)}+t\,\|v\|_{L^p(\Omega)}\\
&\le \|u-v\|_{L^p(\Omega)}+t\,\|v\|_{L^p(\Omega)}.
\end{split}
\]
By taking the infimum over $v\in C^\infty_0(\Omega)$, we obtain for $u\in C^\infty_0(\Omega)$ and $0<t<1$
\[
t\,\|u\|_{L^p(\Omega)}\le K(t,u,L^p(\Omega),L^p(\Omega)).
\]
By integrating again, we get this time
\begin{equation}
\label{s1}
\int_0^{1} \left(\frac{K(t,u,L^p(\Omega),L^p(\Omega))}{t^s}\right)^p\,\frac{dt}{t}\ge \int_0^1 t^{p-s\,p}\,\|u\|^p_{L^p(\Omega)}\,\frac{dt}{t}=\frac{\|u\|_{L^p(\Omega)}^p}{(1-s)\,p}.
\end{equation}
By summing up \eqref{s0} and \eqref{s1}, we get the estimate
\begin{equation}
\label{1ststage}
\|u\|_{L^p(\Omega)}^p\le s\,(1-s)\,\int_0^{+\infty} \left(\frac{K(t,u,L^p(\Omega),L^p(\Omega))}{t^s}\right)^p\,\frac{dt}{t}.
\end{equation}
\vskip.2cm\noindent
{\bf Second stage.} Given $u\in C^\infty_0(\Omega)$, we take $v\in C^\infty_0(\Omega)$. We can suppose that $\lambda^1_p(\Omega)>0$, otherwise \eqref{poincare_inter} trivially holds.
By definition of $\lambda^1_{p}(\Omega)$ we have that
$$
\|u-v\|_{L^p(\Omega)}+t\,\|v\|_{L^p(\Omega)} \leq \|u-v\|_{L^p(\Omega)}+t\, (\lambda_{p}^1(\Omega))^{-\frac{1}{p}}\,\|\nabla v\|_{L^p(\Omega)}.
$$
If we recall the definition \eqref{K} of the $K-$functional, we get
$$
K(t,u,L^p(\Omega),L^p(\Omega))^p \leq \left( \|u-v\|_{L^p(\Omega)}+\frac{t}{ (\lambda_{p}^1(\Omega))^\frac{1}{p}}\|\nabla v\|_{L^p(\Omega)} \right)^p,
$$	
and by taking infimum over $v\in C^\infty_0(\Omega)$ and multiplying by $t^{-s\,p}$, we get
\[
t^{-s\,p}K(t,u,L^p(\Omega),L^p(\Omega))^p \leq t^{-s\,p}\, K\left(\frac{t}{ (\lambda_{p}^1(\Omega))^\frac{1}{p}},u\,,L^p(\Omega),\mathcal{D}^{1,p}_0(\Omega)\right)^p.
\]
We integrate over $t>0$, by performing the change of variable $\tau=t/(\lambda_{p}^1(\Omega))^\frac1p$ we get
\[
\begin{split}
\int_0^{+\infty}& \left(\frac{K(t,u,L^p(\Omega),L^p(\Omega))}{t^s}\right)^p\,\frac{dt}{t}\le \frac{1}{(\lambda_{p}^1(\Omega))^s } \|u\|^p_{\mathcal{X}^{s,p}_0(\Omega)}.
\end{split}
\]
By using this in \eqref{1ststage}, we prove the desired inequality \eqref{poincare_inter}.
\end{proof}
We will set
\[
\Lambda^s_p(\Omega)=\inf_{u\in C^\infty_0(\Omega)}\Big\{\|u\|^p_{\mathcal{X}^{s,p}_0(\Omega)}\, :\, \|u\|_{L^p(\Omega)}=1\Big\},
\]
i.e. this is the sharp constant in the relevant Poincar\'e inequality.
As a consequence of \eqref{poincare_inter}, we obtain
\begin{equation}
\label{mah0}
\Big(\lambda^1_{p}(\Omega)\Big)^s\le s\,(1-s)\,\Lambda^s_p(\Omega).
\end{equation}
\begin{prop}[Interpolation inequality]
\label{prop:2bis}
Let $1<p<\infty$ and $0<s<1$. Let $\Omega\subset\mathbb{R}^N$ be an open set. For every $u\in C^\infty_0(\Omega)$ we have
\begin{equation}
\label{interpol}
s\,(1-s)\,\|u\|_{\mathcal{X}^{s,p}_0(\Omega)}^p\le \|u\|_{L^p(\Omega)}^{p\,(1-s)}\,\|\nabla u\|_{L^p(\Omega)}^{s\,p}.
\end{equation}
In particular, we also obtain
\begin{equation}
\label{mah!!}
s\,(1-s)\,\Lambda^s_{p}(\Omega)\le \Big(\lambda^1_{p}(\Omega)\Big)^s.
\end{equation}
\end{prop}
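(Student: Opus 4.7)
The plan is to bound the $K$-functional pointwise by two trivial choices of the competitor $v\in C_0^\infty(\Omega)$, then optimize the resulting integral in the crossover parameter.

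First I would observe the two elementary upper bounds for $K(t,u,L^p(\Omega),\mathcal{D}^{1,p}_0(\Omega))$: taking $v\equiv 0$ in the defining infimum gives $K(t,u)\le \|u\|_{L^p(\Omega)}$, while taking $v=u$ gives $K(t,u)\le t\,\|\nabla u\|_{L^p(\Omega)}$. Setting $A=\|u\|_{L^p(\Omega)}$ and $B=\|\nabla u\|_{L^p(\Omega)}$, these combine to
\[
K(t,u,L^p(\Omega),\mathcal{D}^{1,p}_0(\Omega))\le \min\{A,\,tB\},
\]
with the natural crossover occurring at $t_0=A/B$ (the case $B=0$ being trivial since then $u\equiv 0$ on each connected component, making both sides of \eqref{interpol} vanish).

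Next I would split the defining integral of $\|u\|_{\mathcal{X}^{s,p}_0(\Omega)}^p$ at $t_0$ and use the cheaper bound on each piece:
\[
\int_0^{t_0}\Big(\tfrac{tB}{t^s}\Big)^p\,\frac{dt}{t}=\frac{B^p\,t_0^{p(1-s)}}{p\,(1-s)},\qquad \int_{t_0}^{+\infty}\Big(\tfrac{A}{t^s}\Big)^p\,\frac{dt}{t}=\frac{A^p\,t_0^{-sp}}{s\,p}.
\]
Plugging $t_0=A/B$ collapses both into $A^{p(1-s)}B^{sp}$, and summing yields
\[
\|u\|_{\mathcal{X}^{s,p}_0(\Omega)}^p\le \frac{1}{p}\left(\frac{1}{s}+\frac{1}{1-s}\right)A^{p(1-s)}\,B^{sp}=\frac{1}{p\,s\,(1-s)}\,\|u\|_{L^p(\Omega)}^{p(1-s)}\,\|\nabla u\|_{L^p(\Omega)}^{sp}.
\]
Multiplying by $s(1-s)$ and dropping the harmless factor $1/p\le 1$ produces \eqref{interpol} exactly.

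Finally, \eqref{mah!!} is an immediate corollary: restricting the inequality to $u\in C^\infty_0(\Omega)$ with $\|u\|_{L^p(\Omega)}=1$ gives $s(1-s)\,\|u\|_{\mathcal{X}^{s,p}_0(\Omega)}^p\le \|\nabla u\|_{L^p(\Omega)}^{sp}$; taking the infimum over such $u$ on the right produces $(\lambda^1_p(\Omega))^s$, while the infimum on the left is by definition $s(1-s)\,\Lambda^s_p(\Omega)$. I do not foresee any real obstacle here — the computation is entirely elementary, and the only subtle point is correctly matching the exponents at the crossover $t_0=A/B$ so that the two integrals actually produce the Gagliardo--Nirenberg-type product $A^{p(1-s)}B^{sp}$.
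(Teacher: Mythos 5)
Your proof is correct and is essentially the paper's own argument: the paper also bounds $K(t,u,L^p,\mathcal{D}^{1,p}_0)$ by $\min\{\|u\|_{L^p},\,t\,\|\nabla u\|_{L^p}\}$ (by taking $v=\tau u$ and optimizing over $\tau>0$, which amounts to the same two competitors $v=0$ and $v=u$ you use), splits the integral at $t_0=\|u\|_{L^p}/\|\nabla u\|_{L^p}$, and drops the same harmless factor $1/p$. The deduction of \eqref{mah!!} from \eqref{interpol} is also the one intended.
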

\begin{proof}
We can assume that $u\not \equiv 0$, otherwise there is nothing prove.
In the definition of the $K-$functional $K(t,u,L^p(\Omega),\mathcal{D}^{1,p}_0(\Omega))$ we take $v=\tau\,u$ for $\tau >0$, thus we obtain
\[
\begin{split}
K(t,u,L^p(\Omega),\mathcal{D}^{1,p}_0(\Omega))&\le \inf_{\tau>0} \Big[|1-\tau|\,\|u\|_{L^p(\Omega)}+t\,\tau\,\|\nabla u\|_{L^p(\Omega)}\Big]\\
&=\min\Big\{\|u\|_{L^p(\Omega)},\,t\,\|\nabla u\|_{L^p(\Omega)}\Big\} .
\end{split}
\]
By integrating for $t>0$, we get
\[
\begin{split}
\|u\|_{\mathcal{X}^{s,p}_0(\Omega)}^p&\le \int_0^{+\infty} \frac{\min\Big\{\|u\|^p_{L^p(\Omega)},\,t^p\,\|\nabla u\|^p_{L^p(\Omega)}\Big\}}{t^{s\,p}}\,\frac{dt}{t}\\
&=\|\nabla u\|_{L^p(\Omega)}^p\,\int_0^\frac{\|u\|_{L^p(\Omega)}}{\|\nabla u\|_{L^p(\Omega)}} t^{p\,(1-s)}\,\frac{dt}{t}\\
&\quad +\|u\|_{L^p(\Omega)}^p\,\int_\frac{\|u\|_{L^p(\Omega)}}{\|\nabla u\|_{L^p(\Omega)}}^{+\infty} t^{-s\,p}\,\frac{dt}{t}\\
&=\|u\|_{L^p(\Omega)}^{p\,(1-s)}\,\|\nabla u\|_{L^p(\Omega)}^{s\,p}\,\left[\frac{1}{p\,(1-s)}+\frac{1}{s\,p}\right].
\end{split}
\]
We thus get the desired conclusion \eqref{interpol}. The estimate \eqref{mah!!} easily follows from the definition of Poincar\'e constant.
\end{proof}
From \eqref{mah0} and \eqref{mah!!}, we get in particular the following 
\begin{coro}[Equivalence of Poincar\'e constants]
\label{coro:uguali}
Let $1<p<\infty$ and $0<s<1$. For every $\Omega\subset\mathbb{R}^N$ open set we have
\[
s\,(1-s)\,\Lambda^s_p(\Omega)=\Big(\lambda^1_p(\Omega)\Big)^s.
\]
In particular, there holds
\[
\mathcal{D}^{1,p}_0(\Omega) \hookrightarrow L^p(\Omega) \qquad \Longleftrightarrow \qquad \mathcal{X}^{s,p}_0(\Omega) \hookrightarrow L^p(\Omega).
\]
\end{coro}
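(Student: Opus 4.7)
The proof is essentially a bookkeeping exercise, since all the analytic work has already been done in the preceding lemma and Proposition \ref{prop:2bis}. The plan is to read off the equality of constants by sandwiching $s(1-s)\Lambda^s_p(\Omega)$ between the two bounds, and then to translate this equality of sharp Poincar\'e constants into the equivalence of embeddings.

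For the first assertion, I would simply invoke \eqref{mah0} and \eqref{mah!!}. The former gives
\[
\bigl(\lambda^1_p(\Omega)\bigr)^s \le s\,(1-s)\,\Lambda^s_p(\Omega),
\]
while the latter gives the reverse inequality
\[
s\,(1-s)\,\Lambda^s_p(\Omega) \le \bigl(\lambda^1_p(\Omega)\bigr)^s.
\]
Combining the two yields the claimed identity. No further computation is required for this part.

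For the second assertion, I would argue that for an open set $\Omega$, the continuous embedding $\mathcal{D}^{1,p}_0(\Omega)\hookrightarrow L^p(\Omega)$ is equivalent by definition to the positivity $\lambda^1_p(\Omega)>0$, and likewise $\mathcal{X}^{s,p}_0(\Omega)\hookrightarrow L^p(\Omega)$ is equivalent to $\Lambda^s_p(\Omega)>0$. Since $0<s<1$ and hence $s(1-s)$ is a strictly positive finite number, the identity just established shows that $\lambda^1_p(\Omega)>0$ if and only if $\Lambda^s_p(\Omega)>0$. This gives the equivalence of the two embeddings.

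There is no real obstacle here: once Proposition \ref{prop:2bis} and the preceding Poincar\'e-type lemma are in place, both statements of the corollary are immediate. The only thing worth mentioning is the care needed in distinguishing ``$\mathcal{D}^{s,p}_0(\Omega)$ is a genuine functional space'' from ``it continuously embeds into $L^p(\Omega)$'', but the definition of $\Lambda^s_p(\Omega)$ and $\lambda^1_p(\Omega)$ as infima over $C^\infty_0(\Omega)$ normalized in $L^p$ makes the embedding question literally the question of whether these infima are strictly positive.
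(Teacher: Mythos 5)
Your argument matches the paper's exactly: the corollary is stated immediately after Proposition \ref{prop:2bis} with the one-line justification ``From \eqref{mah0} and \eqref{mah!!}, we get in particular the following,'' which is precisely the sandwich you perform, and the embedding equivalence is the same immediate positivity observation. Nothing is missing or different.
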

\begin{rem}[Extensions by zero in $\mathcal{X}^{s,p}_0$]
\label{rem:zeroext}
We observe that by interpolating the ``extension by zero'' operators
\[
\mathcal{T}_0 : \mathcal{D}^{1,p}_0(\Omega) \to \mathcal{D}^{1,p}_0(\mathbb{R}^N)\\
\qquad \mbox{ and }\qquad  \mathcal{T}_0:L^p(\Omega) \to L^{p}(\mathbb{R}^N)
\]
which are both continuous, one obtains the same result for the interpolating spaces. In other words, we have 
\[
\|u\|^p_{\mathcal{X}^{s,p}_0(\mathbb{R}^N)}\le \|u\|^p_{\mathcal{X}^{s,p}_0(\Omega)},\qquad \mbox{ for every }u\in C^\infty_0(\Omega).
\]
This can be also seen directly: it is sufficient to observe that $C^\infty_0(\Omega)\subset C^\infty_0(\mathbb{R}^N)$, thus we immediately get
\[
K(t,u,L^p(\mathbb{R}^N),\mathcal{D}^{1,p}_0(\mathbb{R}^N))\le K(t,u,L^p(\Omega),\mathcal{D}^{1,p}_0(\Omega)),
\]
since in the $K-$functional on the left-hand side the infimum is performed on a larger class.
By integrating, we get the conclusion.
\par
However, differently from the case of $\mathcal{D}^{1,p}_0(\Omega)$, $L^p(\Omega)$ and $\mathcal{D}^{s,p}_0(\Omega)$, in general for $u\in C^\infty_0(\Omega)$ we have
\[
\|u\|^p_{\mathcal{X}^{s,p}_0(\mathbb{R}^N)}< \|u\|^p_{\mathcal{X}^{s,p}_0(\Omega)}. 
\]
In other words, even if $u\equiv 0$ outside $\Omega$, passing from $\Omega$ to $\mathbb{R}^N$ has an impact on the interpolation norm.
\par
Actually, if $\Omega$ has not smooth boundary, the situation can be much worse than this. We refer to Remark \ref{rem:nightmare} below.
\end{rem}

\section{Interpolation VS. Sobolev-Slobodecki\u{\i}}
\label{sec:concrete}
\subsection{General sets}
We want to compare the norms of $\mathcal{D}^{s,p}_0(\Omega)$ and $\mathcal{X}^{s,p}_0(\Omega)$. We start with the simplest estimate, which is valid for every open set.
\begin{prop}[Comparison of norms I]
\label{lm:3}
Let $1<p<\infty$ and $0<s<1$. Let $\Omega\subset\mathbb{R}^N$ be an open set, then for every $u\in C^\infty_0(\Omega)$ we have
\begin{equation}
\label{mah}
\frac{1}{2^{p\,(1-s)}\,N\,\omega_N}\,\|u\|^p_{\mathcal{D}^{s,p}_0(\Omega)}\le \|u\|^p_{\mathcal{X}^{s,p}_0(\Omega)}.
\end{equation}
In particular, we have the continuous inclusion $\mathcal{X}^{s,p}_0(\Omega)\subset\mathcal{D}^{s,p}_0(\Omega)$.
\end{prop}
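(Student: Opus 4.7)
The plan is to control the Gagliardo double integral in polar form by the $K$-functional, essentially by constructing a competitor for the $K$-functional from any smooth approximation of $u$ together with a translation trick.

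First I would rewrite the Sobolev-Slobodecki\u{\i} seminorm as a single integral over translations. Setting $\tau_h u(x) := u(x+h)$ and using the change of variable $y = x+h$, Fubini gives
\[
\|u\|^p_{\mathcal{D}^{s,p}_0(\Omega)} = \iint_{\mathbb{R}^N\times\mathbb{R}^N} \frac{|u(x)-u(y)|^p}{|x-y|^{N+sp}}\,dx\,dy = \int_{\mathbb{R}^N} \frac{\|\tau_h u - u\|^p_{L^p(\mathbb{R}^N)}}{|h|^{N+sp}}\,dh.
\]
The point is then to estimate $\|\tau_h u - u\|_{L^p(\mathbb{R}^N)}$ in terms of the $K$-functional. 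For any competitor $v\in C^\infty_0(\Omega)$, the triangle inequality plus translation-invariance of the $L^p$ norm and the fundamental theorem of calculus (applied to $v$) yield
\[
\|\tau_h u - u\|_{L^p(\mathbb{R}^N)} \le \|\tau_h(u-v)\|_{L^p} + \|\tau_h v - v\|_{L^p} + \|v-u\|_{L^p} \le 2\,\|u-v\|_{L^p(\Omega)} + |h|\,\|\nabla v\|_{L^p(\Omega)}.
\]

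Now I would factor out a $2$ and take the infimum over $v\in C^\infty_0(\Omega)$ to recognize the $K$-functional:
\[
\|\tau_h u - u\|_{L^p(\mathbb{R}^N)} \le 2\,\inf_{v\in C^\infty_0(\Omega)}\!\Bigl\{\|u-v\|_{L^p(\Omega)} + \tfrac{|h|}{2}\,\|\nabla v\|_{L^p(\Omega)}\Bigr\} = 2\,K\!\left(\tfrac{|h|}{2},u,L^p(\Omega),\mathcal{D}^{1,p}_0(\Omega)\right).
\]
Inserting this into the identity above and passing to polar coordinates $h = r\omega$ (which produces exactly the factor $N\,\omega_N$ as the measure of $\mathbb{S}^{N-1}$), I obtain
\[
\|u\|^p_{\mathcal{D}^{s,p}_0(\Omega)} \le 2^p\,N\,\omega_N \int_0^{+\infty} \frac{K(r/2,u,L^p(\Omega),\mathcal{D}^{1,p}_0(\Omega))^p}{r^{sp+1}}\,dr.
\]

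Finally I would perform the change of variable $t = r/2$, which contributes a factor $2/2^{sp+1} = 2^{-sp}$, to arrive at
\[
\|u\|^p_{\mathcal{D}^{s,p}_0(\Omega)} \le 2^{p(1-s)}\,N\,\omega_N \int_0^{+\infty}\!\left(\frac{K(t,u,L^p(\Omega),\mathcal{D}^{1,p}_0(\Omega))}{t^s}\right)^p \frac{dt}{t} = 2^{p(1-s)}\,N\,\omega_N\,\|u\|^p_{\mathcal{X}^{s,p}_0(\Omega)},
\]
which is the desired inequality \eqref{mah}. The continuous inclusion then follows because any Cauchy sequence in $\mathcal{X}^{s,p}_0(\Omega)$ is automatically Cauchy in $\mathcal{D}^{s,p}_0(\Omega)$. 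There is no real obstacle here; the only thing to be careful about is the bookkeeping of the constants under the change of variable, and to note that the translation trick legitimately reduces the problem to the $K$-functional because the competitor $v$ is smooth and compactly supported, so its gradient controls finite differences at scale $|h|$.
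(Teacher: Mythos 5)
Your proof is correct and follows essentially the same route as the paper's: rewrite the Gagliardo seminorm as an integral in $h$ of $\|\tau_h u - u\|_{L^p}^p/|h|^{N+sp}$, bound $\|\tau_h u - u\|_{L^p}\le 2\|u-v\|_{L^p}+|h|\,\|\nabla v\|_{L^p}$ for any competitor $v\in C^\infty_0(\Omega)$, factor out the $2$ to recognize $2K(|h|/2,\cdot)$, pass to polar coordinates (picking up $N\omega_N$), and rescale $t=r/2$ to get $2^{p(1-s)}$. The only cosmetic difference is that you take the infimum over $v$ directly rather than selecting an $\varepsilon$-near-minimizer and letting $\varepsilon\to 0$, which is equivalent and arguably cleaner.
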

\begin{proof}
To prove \eqref{mah}, we take $h\in\mathbb{R}^N\setminus\{0\}$ and $\varepsilon>0$, then there exists $v\in C^\infty_0(\Omega)$ such that
\begin{equation}
\label{dai}
\|u-v\|_{L^p(\Omega)}+|h|\,\|v\|_{\mathcal{D}^{1,p}_0(\Omega)}\le (1+\varepsilon)\,K(|h|,u,L^p(\Omega),\mathcal{D}^{1,p}_0(\Omega)).
\end{equation}
Thus for $h\not=0$ we get\footnote{In the second inequality, we use the classical fact
\[
\begin{split}
\int_{\mathbb{R}^N} |\varphi(x+h)-\varphi(x)|^{p}\,dx&=\int_{\mathbb{R}^N} \left|\int_0^1\langle \nabla \varphi(x+t\,h),h\rangle\,dt\right|^{p}\,dx\\
&\le |h|^p\,\int_{\mathbb{R}^N}\int_0^1 |\nabla\varphi(x+t\,h)|^p\,dt\,dx\\
&= |h|^p\,\int_0^1 \left(\int_{\mathbb{R}^N}|\nabla\varphi(x+t\,h)|^p\,dx\right)\,dt=|h|^p\,\|\nabla \varphi\|_{L^p(\mathbb{R}^N)}.
\end{split}
\]}
\[
\begin{split}
\left(\int_{\mathbb{R}^N} \frac{|u(x+h)-u(x)|^{p}}{|h|^{N+s\,p}}\,dx\right)^\frac{1}{p}&\le \left(\int_{\mathbb{R}^N} \frac{|u(x+h)-v(x+h)-u(x)+v(x)|^{p}}{|h|^{N+s\,p}}\,dx\right)^\frac{1}{p}\\
&+\left(\int_{\mathbb{R}^N} \frac{|v(x+h)-v(x)|^{p}}{|h|^{N+s\,p}}\,dx\right)^\frac{1}{p}\\
&\le 2\,|h|^{-\frac{N}{p}-s}\,\|u-v\|_{L^p(\Omega)}\\
&+|h|^{1-\frac{N}{p}+s}\,\|\nabla v\|_{L^p(\Omega)}\\
&\le 2\,|h|^{-\frac{N}{p}-s}\,\left(\|u-v\|_{L^{p}(\Omega)}+\frac{|h|}{2}\,\|v\|_{\mathcal{D}^{1,p}_0(\Omega)}\right).
\end{split}
\]
By using \eqref{dai}, we then obtain
\[
\int_{\mathbb{R}^N} \frac{|u(x+h)-u(x)|^{p}}{|h|^{N+s\,p}}\,dx\le 2^p\,(1+\varepsilon)^p\, \left(\frac{K(|h|/2,u,L^p(\Omega),\mathcal{D}^{1,p}_0(\Omega))}{|h|^s}\right)^p\,\frac{1}{|h|^N}.
\]
We now integrate with respect to $h\in\mathbb{R}^N$. This yields
\[
\begin{split}
\iint_{\mathbb{R}^N\times\mathbb{R}^N} \frac{|u(x+h)-u(x)|^{p}}{|h|^{N+s\,p}}\,dx\,dh&\le 2^p\,(1+\varepsilon)^p\, \int_{\mathbb{R}^N} \left(\frac{K(|h|/2,u,L^p(\Omega),\mathcal{D}^{1,p}_0(\Omega))}{|h|^s}\right)^{p}\,\frac{dh}{|h|^N}\\
&=2^p\,(1+\varepsilon)^p\,N\,\omega_N\, \int_0^{+\infty} \left(\frac{K(t/2,u,L^p(\Omega),\mathcal{D}^{1,p}_0(\Omega))}{t^s}\right)^{p}\,\frac{dt}{t}.
\end{split}
\]
By making the change of variable $t/2=\tau$ and exploiting the arbitariness of $\varepsilon>0$, we eventually reach the desired estimate.
\end{proof}
\begin{coro}[Interpolation inequality for $\mathcal{D}^{s,p}_0$]
Let $1<p<\infty$ and $0<s<1$. Let $\Omega\subset\mathbb{R}^N$ be an open set. For every $u\in C^\infty_0(\Omega)$ we have
\begin{equation}
\label{mah!}
s\,(1-s)\,\|u\|_{\mathcal{D}^{s,p}_0(\Omega)}^p\le 2^{p\,(1-s)}\,N\,\omega_N\,\|u\|_{L^p(\Omega)}^{p\,(1-s)}\,\|\nabla u\|_{L^p(\Omega)}^{s\,p}.
\end{equation}
\end{coro}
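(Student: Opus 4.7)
The plan is simply to chain two inequalities that have already been established in the preceding results. From Proposition \ref{lm:3} (the inequality \eqref{mah}), I have
\[
\|u\|^p_{\mathcal{D}^{s,p}_0(\Omega)}\le 2^{p\,(1-s)}\,N\,\omega_N\,\|u\|^p_{\mathcal{X}^{s,p}_0(\Omega)},
\]
for every $u\in C^\infty_0(\Omega)$. From Proposition \ref{prop:2bis} (the interpolation inequality \eqref{interpol}), I have
\[
s\,(1-s)\,\|u\|^p_{\mathcal{X}^{s,p}_0(\Omega)}\le \|u\|_{L^p(\Omega)}^{p\,(1-s)}\,\|\nabla u\|_{L^p(\Omega)}^{s\,p}.
\]
Multiplying the first inequality by $s\,(1-s)$ and then substituting the second gives
\[
s\,(1-s)\,\|u\|^p_{\mathcal{D}^{s,p}_0(\Omega)}\le 2^{p\,(1-s)}\,N\,\omega_N\,\Big(s\,(1-s)\,\|u\|^p_{\mathcal{X}^{s,p}_0(\Omega)}\Big)\le 2^{p\,(1-s)}\,N\,\omega_N\,\|u\|_{L^p(\Omega)}^{p\,(1-s)}\,\|\nabla u\|_{L^p(\Omega)}^{s\,p},
\]
which is exactly \eqref{mah!}. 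There is no real obstacle here since the work has been done upstream; the only point to note is that the constant $2^{p(1-s)}\,N\,\omega_N$ in the first estimate is independent of $s(1-s)$, so it passes through untouched, and the factor $s(1-s)$ comes entirely from the interpolation inequality for $\mathcal{X}^{s,p}_0(\Omega)$.
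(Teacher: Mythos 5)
Your proof is correct and takes exactly the same approach as the paper, which simply says the result follows by combining Propositions \ref{lm:3} and \ref{prop:2bis}. You have spelled out the chaining slightly more explicitly, but the argument is identical.
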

\begin{proof}
It is sufficient to combine Propositions \ref{lm:3} and \ref{prop:2bis}.
\end{proof}
\begin{rem}
For $p\searrow 1$, the previous inequality becomes \cite[Proposition 4.2]{BLP}. In this case, the constant in \eqref{mah!} is sharp for $N=1$.
\end{rem}
For a general open set $\Omega\subset\mathbb{R}^N$, the converse of inequality \eqref{mah} does not hold.
This means that in general we have 
\[
\mathcal{X}^{s,p}_0(\Omega)\subset \mathcal{D}^{s,p}_0(\Omega) \qquad \mbox{ and }\qquad 
\mathcal{X}^{s,p}_0(\Omega)\not= \mathcal{D}^{s,p}_0(\Omega),
\]
the inclusion being continuous.
We use the construction of Appendix \ref{sec:example} in order to give a counter-example.
\begin{exa}
\label{exa:counternorma}
With the notation of Appendix \ref{sec:example}, let us take\footnote{In dimension $N=1$, we simply take $E=\mathbb{R}\setminus\mathbb{Z}$.}
\[
E=\mathbb{R}^N\setminus \left(\bigcup_{z\in\mathbb{Z}^N} (F+z)\right),\qquad \mbox{ with }F=\left[-\frac{1}{4},\frac{1}{4}\right]^{N-1}\times\{0\}.
\]
For every $\varepsilon>0$, we take $u_n\in C^\infty_0(\widetilde\Omega_n)\subset C^\infty_0(E)$ such that
\[
[u_n]^p_{W^{s,p}(\mathbb{R}^N)}<\lambda^s_{p}(\widetilde\Omega_n)+\varepsilon\qquad \mbox{ and }\qquad \int_{E} |u_n|^p\,dx=1.
\]
Here the set $\widetilde\Omega_n$ is defined by
\[
\widetilde\Omega_n=\bigcup_{z\in\mathbb{Z}^N_n} \Big(\Omega+z\Big)=\left[-n-\frac{1}{2},n+\frac{1}{2}\right]^N\setminus \bigcup_{z\in\mathbb{Z}^N_n} (F+z).
\]
On the other hand, we have
\[
\begin{split}
\|u_n\|^p_{\mathcal{X}^{s,p}_0(E)}&\ge \frac{\Big(\lambda_{p}^1(E)\Big)^s}{s\,(1-s)}\,\int_{E} |u_n|^p\,dx\ge\frac{\Big(\mu_p(Q;F)\Big)^s}{s\,(1-s)},
\end{split}
\]
where we also used \eqref{E}. 
By Lemma \ref{lm:bas}, we have that $\lambda^s_{p}(\Omega_n)$ converges to $0$ for $s\,p<1$, so that
\[
\liminf_{n\to\infty}\|u_n\|_{\mathcal{X}^{s,p}_0(E)}^p\ge \frac{1}{C}\qquad \mbox{ and }\qquad \limsup_{n\to\infty}\, [u_n]^p_{W^{s,p}(\mathbb{R}^N)}\le \varepsilon.
\] 
Thus by the arbitrariness of $\varepsilon$, we obtain
\[
\lim_{n\to\infty} \frac{\displaystyle\|u_n\|^p_{\mathcal{D}^{s,p}_0(E)}}{\|u_n\|^p_{\mathcal{X}^{s,p}_0(E)}}=0,\qquad \mbox{ for } 1<p<\infty \mbox{ and } s< \frac{1}{p}.
\]
\end{exa}
\begin{rem}[Extensions by zero in $\mathcal{X}^{s,p}_0(\Omega)$...reprise]
\label{rem:nightmare}
We take the set $E\subset\mathbb{R}^N$ and the sequence $\{u_n\}_{n\in\mathbb{N}}\subset C^\infty_0(E)$ as in Example \ref{exa:counternorma}. We have seen that
\[
\lim_{n\to\infty} \frac{\|u_n\|_{\mathcal{X}^{s,p}_0(E)}}{\|u_n\|_{\mathcal{D}^{s,p}_0(E)}}=+\infty.
\]
By using Proposition \ref{lm:Rn} we obtain
\[
\lim_{n\to\infty} \frac{\|u_n\|_{\mathcal{X}^{s,p}_0(E)}}{\|u_n\|_{\mathcal{X}^{s,p}_0(\mathbb{R}^N)}}=+\infty,
\]
as well, still for $s\,p<1$. Thus the ``extension by zero'' operator in general is not continuous.
\end{rem}
\subsection{Convex sets}
We now prove the converse of \eqref{mah}, under suitable assumptions on $\Omega$. We start with the case of a {\it convex} set. The case $\Omega=\mathbb{R}^N$ is simpler and instructive, thus we give a separate statement. The proof can be found for example in \cite[Lemma 35.2]{T}. We reproduce it, for the reader's convenience. We also single out an explicit determination of the constant.
\begin{prop}[Comparison of norms II: $\mathbb{R}^N$]
\label{lm:Rn}
Let $1<p<\infty$ and $0<s<1$. For every $u\in C^\infty_0(\mathbb{R}^N)$ we have
\[
\|u\|^p_{\mathcal{X}^{s,p}_0(\mathbb{R}^N)}\le \Big(N\,(N+1)\Big)^p\,\frac{2^p}{N\,\omega_N}\,\|u\|^p_{\mathcal{D}_0^{s,p}(\mathbb{R}^N)}.
\]
In particular, we have that $\mathcal{D}^{s,p}_0(\mathbb{R}^N)=\mathcal{X}^{s,p}_0(\mathbb{R}^N)$.
\end{prop}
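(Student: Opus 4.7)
The plan is to exhibit, for each $t>0$, a concrete competitor in the infimum defining $K(t,u,L^p(\mathbb{R}^N),\mathcal{D}^{1,p}_0(\mathbb{R}^N))$, constructed from $u$ by averaging at scale $t$. The natural choice is the ball-mean
\[
v_t(x):=\frac{1}{\omega_N\,t^N}\int_{B_t(x)} u(y)\,dy,
\]
which, for $u\in C^\infty_0(\mathbb{R}^N)$, is Lipschitz with compact support and hence lies in $\mathcal{D}^{1,p}_0(\mathbb{R}^N)$. The whole point is to bound $\|u-v_t\|_{L^p(\mathbb{R}^N)}$ and $t\,\|\nabla v_t\|_{L^p(\mathbb{R}^N)}$ in terms of the $L^p$-modulus of continuity $h\mapsto \|u(\cdot+h)-u(\cdot)\|_{L^p(\mathbb{R}^N)}$, because
\[
\int_{\mathbb{R}^N}\frac{\|u(\cdot+h)-u(\cdot)\|_{L^p(\mathbb{R}^N)}^p}{|h|^{N+s\,p}}\,dh=[u]_{W^{s,p}(\mathbb{R}^N)}^p.
\]

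For the error term, Jensen's inequality together with the change of variable $y=x+h$ and Fubini's theorem yield
\[
\|u-v_t\|_{L^p(\mathbb{R}^N)}^p\le \frac{1}{\omega_N\,t^N}\int_{B_t(0)} \|u(\cdot+h)-u(\cdot)\|_{L^p(\mathbb{R}^N)}^p\,dh.
\]
For the gradient, the divergence theorem, combined with $\int_{\partial B_t(x)}\nu\,d\mathcal{H}^{N-1}=0$ (which allows one to subtract the constant $u(x)$), gives
\[
\nabla v_t(x)=\frac{1}{\omega_N\,t^N}\int_{\partial B_t(x)} \bigl(u(y)-u(x)\bigr)\,\nu(y)\,d\mathcal{H}^{N-1}(y),
\]
and then Hölder's inequality on the sphere, followed by integration in $x$, produces an estimate of the form
\[
\|\nabla v_t\|_{L^p(\mathbb{R}^N)}^p\le \frac{C_N}{t^{N+p-1}}\int_{\partial B_t(0)} \|u(\cdot+h)-u(\cdot)\|_{L^p(\mathbb{R}^N)}^p\,d\mathcal{H}^{N-1}(h),
\]
with an explicit dimensional constant $C_N$.

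It remains to combine these two inequalities via $K(t,u,L^p,\mathcal{D}^{1,p}_0)\le \|u-v_t\|_{L^p(\mathbb{R}^N)}+t\,\|\nabla v_t\|_{L^p(\mathbb{R}^N)}$, raise to the $p$-th power (paying a convexity factor $2^{p-1}$), multiply by $t^{-s\,p-1}$ and integrate on $(0,+\infty)$. For the first summand, Tonelli's theorem and the computation $\int_{|h|}^{+\infty} t^{-N-s\,p-1}\,dt=|h|^{-N-s\,p}/(N+s\,p)$ reconstruct a dimensional multiple of $[u]_{W^{s,p}(\mathbb{R}^N)}^p$; for the second summand, polar coordinates $dh=t^{N-1}\,dt\,d\mathcal{H}^{N-1}$ reassemble the very same Gagliardo-Slobodecki\u{\i} integral. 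Summing the two contributions bounds $\|u\|_{\mathcal{X}^{s,p}_0(\mathbb{R}^N)}^p$ by a dimensional multiple of $\|u\|_{\mathcal{D}^{s,p}_0(\mathbb{R}^N)}^p$.

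The main technical nuisance is keeping explicit track of the dimensional constants to match the claimed value $(N\,(N+1))^p\cdot 2^p/(N\,\omega_N)$: the factors $2^p$ and $N\,\omega_N$ are transparent, while the peculiar $(N+1)^p$ is presumably produced by a careful application of Hölder on the sphere together with a sharper handling than the brutal $(a+b)^p\le 2^{p-1}(a^p+b^p)$. Once the inequality is established, the equality $\mathcal{D}^{s,p}_0(\mathbb{R}^N)=\mathcal{X}^{s,p}_0(\mathbb{R}^N)$ follows immediately by combining it with the reverse continuous inclusion of Proposition \ref{lm:3}.
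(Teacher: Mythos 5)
Your proposal is correct and, while it follows the same overall strategy as the paper (plug a mollified $u$ at scale $t$ into the $K$--functional and estimate both errors by the $L^p$--modulus of continuity $U(h)=\|u(\cdot+h)-u(\cdot)\|_{L^p}$), it departs from the paper's proof in two substantive ways, and the departure is an improvement. The paper takes the Lipschitz cone kernel $\psi=\frac{N+1}{\omega_N}(1-|\cdot|)_+$ (the $N+1$ in the statement's constant is just its normalization), differentiates the kernel, and thereby obtains gradient bounds involving a \emph{solid} integral $\int_0^t\overline{U}\,d\varrho$; this antiderivative structure then forces a detour through the one--dimensional Hardy inequality of Appendix B (Lemma \ref{lm:hardy1D}) before polar coordinates can reassemble the Gagliardo seminorm. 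You instead take the ball indicator kernel, differentiate $u$ rather than the kernel, and convert $\int_{B_t(x)}\nabla u$ into a \emph{surface} integral over $\partial B_t(x)$ via the divergence theorem (after subtracting the constant $u(x)$, legitimate since $\int_{\partial B_t}\nu\,d\mathcal{H}^{N-1}=0$). Because that bound is already localized at the single radial scale $\varrho=t$, the passage to polar coordinates is immediate and the Hardy lemma is never needed. Carrying out the bookkeeping one finds
\[
\|u\|_{\mathcal{X}^{s,p}_0(\mathbb{R}^N)}^p\le \frac{2^{p-1}}{\omega_N}\left(\frac{1}{N+sp}+N^{p-1}\right)[u]_{W^{s,p}(\mathbb{R}^N)}^p,
\]
which is $s$--independent and in fact smaller than the paper's stated constant $2^p(N(N+1))^p/(N\omega_N)$. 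So, as you anticipate, you will not reproduce the literal displayed constant---that is an artifact of the cone kernel and the Hardy step---but you prove a sharper inequality with the same qualitative content, and the concluding equality $\mathcal{D}^{s,p}_0(\mathbb{R}^N)=\mathcal{X}^{s,p}_0(\mathbb{R}^N)$ then follows from Proposition \ref{lm:3} exactly as you say. The only thing worth making explicit in a final write--up is that $v_t\in C^\infty_0(\mathbb{R}^N)$ (it is the convolution of $u\in C^\infty_0$ with a compactly supported $L^1$ kernel), so it is an admissible competitor in the $K$--functional.
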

\begin{proof}
Let $u\in C^\infty_0(\mathbb{R}^N)$, we set
\[
U(h)=\left(\int_{\mathbb{R}^N} |u(x+h)-u(x)|^p\,dx\right)^\frac{1}{p},\qquad h\in\mathbb{R}^N,
\]
and observe that by construction
\[
\int_{\mathbb{R}^N} \frac{U(h)^p}{|h|^{N+s\,p}}\,dh=[u]_{W^{s,p}(\mathbb{R}^N)}^p.
\]
We also define
\[
\overline{U}(\varrho)=\frac{1}{N\,\omega_N\,\varrho^{N-1}}\,\int_{\{h\in\mathbb{R}^N\, :\, |h|=\varrho\}} U\,d\mathcal{H}^{N-1},\qquad \varrho>0,
\]
thus by Jensen's inequality we have
\begin{equation}
\label{opla}
\begin{split}
\int_0^{+\infty} \left(\frac{\overline U}{\varrho^s}\right)^p\,\frac{d\varrho}{\varrho}&\le \frac{1}{N\,\omega_N}\,\int_{0}^{+\infty} \left(\int_{\{h\in\mathbb{R}^N\, :\, |h|=\varrho\}} U^p\,d\mathcal{H}^{N-1}\right)\,\frac{d\varrho}{\varrho^{N+s\,p}}\\
&= \frac{1}{N\,\omega_N}\int_{\mathbb{R}^N} \frac{U(h)^p}{|h|^{N+s\,p}}\,dh=\frac{1}{N\,\omega_N}\,[u]_{W^{s,p}(\mathbb{R}^N)}^p.
\end{split}
\end{equation}
We now take the compactly supported Lipschitz function 
\[
\psi(x)=\frac{N+1}{\omega_N}\,(1-|x|)_+,
\] 
where $(\,\cdot\,)_+$ stands for the positive part.
Observe that $\psi$ has unit $L^1$ norm, by construction. We then define
\[
\psi_t(x)=\frac{1}{t^N} \,\psi\left(\frac{x}{t}\right),\qquad \mbox{ for }t>0.
\]
From the definition of the $K-$functional, we get
\[
K(t,u,L^p(\mathbb{R}^N),\mathcal{D}^{1,p}_0(\mathbb{R}^N))\le \|u-\psi_t\ast u\|_{L^p(\Omega)}+t\,\|\nabla \psi_t\ast u\|_{L^p(\Omega)},
\]
by observing that $\psi_t\ast u\in C^\infty_0(\mathbb{R}^N)$.
We estimate the two norms separately: for the first one, by Minkowski inequality we get
\[
\begin{split}
\|u-\psi_t\ast u\|_{L^p(\mathbb{R}^N)}&=\left\|\int_{\mathbb{R}^N} [u(\cdot)-u(\cdot-y)]\,\psi_t(y)\,dy\right\|_{L^p(\mathbb{R}^N)}\\
&\le \int_{\mathbb{R}^N}\left(\int_{\mathbb{R}^N} |u(x)-u(x-y)|^p\,dx\right)^\frac{1}{p}\,\psi_t(y)\,dy\\
&=\int_{\mathbb{R}^N} U(-y)\,\psi_{t}(y)\,dy\le \frac{N+1}{\omega_N\,t^N}\,\int_{B_t(0)} U(-y)\,dy\\
&=\frac{N\,(N+1)}{t^N}\,\int_0^t \overline{U}\,\varrho^{N-1}\,d\varrho\le\frac{N\,(N+1)}{t}\,\int_0^t \overline{U}\,d\varrho.
\end{split}
\]
For the norm of the gradient, we first observe that 
\[
\int_{\mathbb{R}^N} \nabla \psi_t(y)\,dy=0,
\]
thus we can write
\[
\nabla \psi_t\ast u=(\nabla \psi_t)\ast u=\int_{\mathbb{R}^N} \nabla \psi_t(y)\,[u(x-y)-u(x)]\,dy.
\]
Consequently, by Minkowski inequality we get
\[
\begin{split}
\|\nabla \psi_t\ast u\|_{L^p(\mathbb{R}^N)}&=\left\|\int_{\mathbb{R}^N} \nabla \psi_t(y)\,[u(\cdot-y)-u(\cdot)]\,dy\right\|_{L^p(\mathbb{R}^N)}\\
&\le \int_{\mathbb{R}^N}\left( \int_{\mathbb{R}^N} |u(x-y)-u(x)|^p\,dx\right)^\frac{1}{p}\,|\nabla \psi_t(y)|\,dy\\
&\le \frac{N+1}{\omega_N\,t^{N+1}} \,\int_{B_t(0)} U(-y)\,dy\le \frac{N\,(N+1)}{t^2}\,\int_0^t \overline{U}\,d\varrho.
\end{split}
\]
In conclusion, we obtained for every $t>0$
\begin{equation}
\label{stimaK}
K(t,u,L^p(\mathbb{R}^N),\mathcal{D}^{1,p}_0(\mathbb{R}^N))\le \frac{2\,N\,(N+1)}{t}\,\int_0^t \overline{U}\,d\varrho.
\end{equation}
If we integrate on $(0,T)$, the previous estimate gives
\[
\int_0^T \left(\frac{K(t,u,L^p(\mathbb{R}^N),\mathcal{D}^{1,p}_0(\mathbb{R}^N))}{t^s}\right)^p\,\frac{dt}{t}\le \Big(2\,N\,(N+1)\Big)^p\,\int_0^T \left(\int_0^t \overline{U}\,d\varrho\right)^p\,t^{-p-s\,p}\,\frac{dt}{t}.
\]
If we now use Lemma \ref{lm:hardy1D} with $\alpha=p+s\,p$ for the function
\[
t\mapsto \int_0^t \overline{U}\,d\varrho,
\]
we get
\[
\begin{split}
\int_0^T \left(\frac{K(t,u,L^p(\Omega),\mathcal{D}^{1,p}_0(\Omega))}{t^s}\right)^p\,\frac{dt}{t}&\le \left(\frac{2\,N\,(N+1)}{s+1}\right)^p\,\int_0^T \left(\frac{\overline{U}}{t^s}\right)^p\frac{dt}{t}\\
&\le  \left(\frac{2\,N\,(N+1)}{s+1}\right)^p\,\frac{1}{N\,\omega_N}\,[u]^p_{W^{s,p}(\mathbb{R}^N)},
\end{split}
\]
where we used \eqref{opla} in the second inequality.
By letting $T$ going to $+\infty$, we get the desired estimate.
\end{proof}
We denote by 
\[
R_\Omega=\sup_{x\in\Omega} \mathrm{dist}(x,\partial\Omega),
\]
the {\it inradius} of an open set $\Omega\subset\mathbb{R}^N$. This is the radius of the largest open ball inscribed in $\Omega$.
We introduce the {\it eccentricity} of an open bounded set $\Omega\subset\mathbb{R}^N$, defined by
\[
\mathcal{E}(\Omega)=\frac{\mathrm{diam\,}(\Omega)}{2\,R_\Omega}.
\]
By generalizing the construction used in \cite[Lemma A.6]{BS} for a ball, we have the following.
\begin{thm}[Comparison of norms II: bounded convex sets]
\label{thm:convex}
Let $1<p<\infty$ and $0<s<1$. If $\Omega\subset\mathbb{R}^N$ is an open bounded convex set, then for every $u\in C^\infty_0(\Omega)$ we have
\begin{equation}
\label{mah!conv}
\|u\|^p_{\mathcal{X}^{s,p}_0(\Omega)}\le C\,\|u\|^p_{\mathcal{D}_0^{s,p}(\Omega)},
\end{equation}
for a constant $C=C(N,p,\mathcal{E}(\Omega))>0$. In particular, we have $\mathcal{X}^{s,p}_0(\Omega)=\mathcal{D}^{s,p}_0(\Omega)$.
\end{thm}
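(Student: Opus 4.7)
The plan is to mirror the convolution argument of Proposition~\ref{lm:Rn}, with two modifications forced by the boundedness of $\Omega$: I split the interpolation integral at a threshold $t \sim R_\Omega$, and in the small-$t$ regime I mollify not $u$ itself, whose convolution would escape $\Omega$, but a rescaling of $u$ whose support is pulled strictly inward. Concretely, fix an incenter $x_0 \in \Omega$ with $B_{R_\Omega}(x_0)\subset\Omega$ and, for $\lambda\in(0,1)$, set
\[
u_\lambda(x) := u\!\left(x_0 + \frac{x-x_0}{\lambda}\right).
\]
By convexity, $\mathrm{supp}(u_\lambda)\subset (1-\lambda)x_0+\lambda\,\overline{\Omega}$ sits at distance at least $(1-\lambda)R_\Omega$ from $\partial\Omega$. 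With $\psi_t$ the tent kernel from the proof of Proposition~\ref{lm:Rn} and the choice $\lambda(t)=1-2t/R_\Omega$, the test function $v_t:=\psi_t\ast u_{\lambda(t)}$ therefore belongs to $C^\infty_0(\Omega)$ for every $0<t<R_\Omega/2$, and this is the competitor I intend to plug into the $K$-functional.

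For the tail $t\ge R_\Omega/2$ I would take $v=0$, so that $K(t,u,L^p(\Omega),\mathcal{D}^{1,p}_0(\Omega))\le \|u\|_{L^p(\Omega)}$; integrating against $t^{-sp-1}dt$ and invoking the fractional Poincar\'e inequality on the bounded convex set $\Omega$ (in the $s(1-s)$-scaled form, with the Poincar\'e constant $\lambda^s_p(\Omega)$ controlled from below by a power of $R_\Omega$) absorbs this piece into $C(N,p,\mathcal{E}(\Omega))\,[u]^p_{W^{s,p}(\mathbb{R}^N)}$. For $t\in(0,R_\Omega/2)$ I use $v_t$ as competitor and split
\[
K(t,u,L^p,\mathcal{D}^{1,p}_0)\le \|u-u_{\lambda(t)}\|_{L^p(\Omega)}+\|u_{\lambda(t)}-\psi_t\ast u_{\lambda(t)}\|_{L^p(\Omega)}+t\,\|\nabla(\psi_t\ast u_{\lambda(t)})\|_{L^p(\Omega)}.
\]
The second and third summands are handled verbatim as in the proof of Proposition~\ref{lm:Rn}, each being controlled by $C_N\,t^{-1}\!\int_0^t \overline{U_{\lambda(t)}}(\rho)\,d\rho$, where $U_\lambda(h):=\|u_\lambda(\cdot+h)-u_\lambda\|_{L^p(\mathbb{R}^N)}$ and $\overline{U_\lambda}$ denotes its spherical average. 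The linear change of variables defining $u_\lambda$ yields the scaling $U_\lambda(h)=\lambda^{N/p}\,U(h/\lambda)$ with $U(h):=\|u(\cdot+h)-u\|_{L^p(\mathbb{R}^N)}$; since $\lambda(t)\in[1/2,1]$ throughout this regime, integrating against $t^{-sp-1}dt$ and applying the one-dimensional Hardy inequality of Lemma~\ref{lm:hardy1D} reduces these two contributions to $C(N,p)\,[u]^p_{W^{s,p}(\mathbb{R}^N)}$.

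The hard part will be controlling the dilation error $\|u-u_{\lambda(t)}\|_{L^p(\Omega)}$, which has no analogue in the $\mathbb{R}^N$ proof. The affine map $x\mapsto x_0+\lambda^{-1}(x-x_0)$ displaces points by $(1-\lambda)|x-x_0|/\lambda$, which is at most $2t\,\mathcal{E}(\Omega)$ on $\mathrm{supp}(u)$, so the heuristic is that this term behaves like the $L^p$ modulus of continuity of $u$ at scale $t\,\mathcal{E}(\Omega)$. To turn this heuristic into an integrable bound I would perform the affine change of variables $z=x+(1-\lambda)(x-x_0)/\lambda$ to rewrite the integrand as $|u(z-h(z))-u(z)|^p$ with $|h(z)|\lesssim t\mathcal{E}(\Omega)$, then invoke the geometric lemma on convex sets from the Appendix (together with a Fubini exchange) to replace the $z$-dependent shift $h(z)$ by a spherical average of genuine translations $U(h)$ with $|h|\lesssim t\mathcal{E}(\Omega)$, and finally integrate against $t^{-sp-1}dt$ and apply Lemma~\ref{lm:hardy1D} once more. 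This should yield the desired bound $\|u-u_{\lambda(t)}\|_{L^p(\Omega)}$ integrated in $t$ by $C(N,p,\mathcal{E}(\Omega))\,[u]^p_{W^{s,p}(\mathbb{R}^N)}$. Summing the tail and the three small-$t$ contributions proves~\eqref{mah!conv}, and the identification $\mathcal{X}^{s,p}_0(\Omega)=\mathcal{D}^{s,p}_0(\Omega)$ follows by pairing~\eqref{mah!conv} with the reverse inclusion~\eqref{mah} of Proposition~\ref{lm:3}.
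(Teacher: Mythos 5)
Your overall architecture (mollify a rescaled copy of $u$ for small $t$, decompose at $t\sim R_\Omega$, use the fractional Poincar\'e inequality for the tail) matches the paper's, and your treatment of the tail and of the two convolution terms is essentially sound. The genuine gap is exactly where you flag it, in the "dilation error" $\|u-u_{\lambda(t)}\|_{L^p(\Omega)}$, and I do not think the plan you sketch for it works. If you perform the change of variables and Fubini, the weighted $t$-integral of $\|u-u_{\lambda(t)}\|_{L^p}^p$ becomes an integral over pairs $(x,y)$ that are forced to lie on the same ray through $x_0$; after polar coordinates this is a superposition of one-dimensional Slobodecki\u{\i}-type norms along rays, extended only in the $(N-1)$ remaining dimensions by the radial Jacobian $r^{N-1}$. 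That is a measure-zero slice of the double integral in $[u]_{W^{s,p}(\mathbb{R}^N)}^p$: for each direction $\omega$, the full seminorm integrates the translation error $\|u(\cdot+\sigma\omega)-u\|_{L^p(\mathbb{R}^N)}^p$ over all of $\mathbb{R}^N$, whereas your expression restricts to the $\xi=0$ slice in cylindrical coordinates around $\omega$. Lemma~\ref{lm:convex} is a statement about the distance of $t\Omega$ from $\partial\Omega$; it provides no mechanism to "replace the $z$-dependent shift by a spherical average of genuine translations", so the crucial step in your third paragraph is not justified. (The dilation error \emph{is} in fact controllable, e.g. by inserting a further mollification and controlling it via the sup-modulus $\omega_p(t,u)=\sup_{|h|\le t}\|u(\cdot+h)-u\|_{L^p}$ and the Besov characterization $\int_0^1 t^{-sp-1}\omega_p(t,u)^p\,dt\lesssim[u]_{W^{s,p}}^p$, but that is a different and more delicate argument than the one you gesture at, and it effectively re-introduces the very convolution you tried to peel off.)

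The paper avoids the dilation error entirely by never introducing the intermediate term: it applies Jensen's inequality to $\|u-\psi_t\ast u_t\|_{L^p(\Omega)}^p$ directly (with the kernel $\psi_t$ still attached), then changes variables $z=\tfrac{R_\Omega}{R_\Omega-t}y$. After this substitution the kernel becomes $\frac{1}{t^N}\psi\big(\tfrac{x-z}{t}+\tfrac{z}{R_\Omega}\big)$, which vanishes unless $|x-z|\le t\big(1+|z|/R_\Omega\big)\lesssim t\,\mathcal{E}(\Omega)$; integrating $\int_0^\infty t^{-N-sp-1}\psi(\cdots)\,dt\lesssim|x-z|^{-N-sp}$ then produces the Slobodecki\u{\i} kernel with an eccentricity-dependent constant, and the same device handles the gradient term via $\int\nabla\psi_t=0$. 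Keeping $\psi_t$ attached to the rescaling is what makes the kernel fat (rather then a Dirac-type ray comparison), which is precisely what your decomposition loses.
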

\begin{proof}
The proof runs similarly to that of Proposition \ref{lm:Rn} for $\mathbb{R}^N$, but now we have to pay attention to boundary issues. Indeed, the function $\psi_t\ast u$ is not supported in $\Omega$, unless $t$ is sufficiently small, depending on $u$ itself. In order to avoid this, we need to perform a controlled scaling of the function.
By keeping the same notation as in the proof of Proposition \ref{lm:Rn}, we need the following modification: we take a point $x_0\in\Omega$ such that 
\[
\mathrm{dist}(x_0,\partial\Omega)=R_\Omega.
\]
Without loss of generality, we can assume that $x_0=0$.
Then we define the rescaled function
\[
u_t=u\left(\frac{R_\Omega}{R_\Omega-t}\,x\right),\qquad 0<t<\frac{R_\Omega}{2}.
\]
We observe that 
\[
\mathrm{support}(u_t)=\frac{R_\Omega-t}{R_\Omega}\,\Omega,
\]
and by Lemma \ref{lm:convex}, we have
\[
\mathrm{dist}\left(\frac{R_\Omega-t}{R_\Omega}\,\Omega,\partial\Omega\right)\ge \left(1-\frac{R_\Omega-t}{R_\Omega}\right)\,R_\Omega=t.
\]
This implies that
\[
\psi_t\ast u_t\in C^\infty_0(\Omega),\qquad \mbox{ for every } 0<t<\frac{R_\Omega}{2}.
\]
We can now estimate the $K-$functional by using the choice $v=\psi_t\ast u_t$, 
that is
\[
\begin{split}
K(t,u,L^p(\Omega),\mathcal{D}^{1,p}_0(\Omega))&\le \|u-\psi_t\ast u_t\|_{L^p(\Omega)}\\
&+t\,\|\nabla \psi_t\ast u_t\|_{L^p(\Omega)},\quad\mbox{ for every } 0<t<\frac{R_\Omega}{2}.
\end{split}
\]
Let us set
\[
\Omega_t=\{x\in\mathbb{R}^N\, :\, \mathrm{dist}(x,\Omega)<t\},
\]
then we have that for every $x\in\Omega$, 
\[
y\mapsto \psi_t(x-y)\quad \mbox{ has support contained in } \Omega_t.
\]
By using this and Jensen's inequality, we obtain
\[
\|u-\psi_t\ast u_t\|_{L^p(\Omega)}^p\le \int_{\Omega}\int_{\Omega_t} \left|u(x)-u\left(\frac{R}{R-t}\,y\right)\right|^p\,\frac{1}{t^N}\,\psi\left(\frac{x-y}{t}\right)\,dy\,dx.
\]
Thus by using a change of variable and Fubini Theorem we get 
\[
\begin{split}
\int_0^{R_\Omega/2} &\left(\frac{\|u-\psi_t\ast u_t\|_{L^p(\Omega)}}{t^s}\right)^{p}\,\frac{dt}{t}\\
&\le \int_0^{R_\Omega/2} \int_{\Omega}\int_{\Omega_t} t^{-s\,p}\,\left|u(x)-u\left(\frac{R_\Omega}{R_\Omega-t}\,y\right)\right|^{p}\,\frac{1}{t^N}\,\psi\left(\frac{x-y}{t}\right)\,dy\,dx\,\frac{dt}{t}\\
&=\left(\frac{R_\Omega-t}{R_\Omega}\right)^N\,\int_0^{R_\Omega/2} \int_{\Omega}\int_{\frac{R_\Omega}{R_\Omega-t}\,\Omega_t} t^{-s\,p}\,\left|u(x)-u(z)\right|^{p}\,\frac{1}{t^N}\,\psi\left(\frac{x}{t}-\frac{R_\Omega-t}{R_\Omega\,t}z\right)\,dz\,dx\,\frac{dt}{t}\\
&\le \int_{\Omega}\int_{\widetilde\Omega}\left|u(x)-u(z)\right|^{p}\left(\int_0^{R_\Omega/2} \,t^{-s\,p-N}\,\psi\left(\frac{x-z}{t}+\frac{z}{R_\Omega}\right)\,\frac{dt}{t}\right)\,dz\,dx,
\end{split}
\]
where we used that
\[
\frac{R_\Omega}{R_\Omega-t}\,\Omega_t\subset \widetilde\Omega:=2\,\Omega_{R_\Omega/2},\qquad \mbox{ for } 0<t<\frac{R_\Omega}{2}.
\]
We now observe that 
\[
\psi\left(\frac{x-z}{t}+\frac{z}{R_\Omega}\right)\not=0\qquad \Longleftrightarrow \qquad  \left|\frac{x-z}{t}+\frac{z}{R_\Omega}\right|<1,
\]
thus in particular
\[
\mbox{ if }\quad \left|\frac{x-z}{t}\right|\ge 1+\left|\frac{z}{R_\Omega}\right|\qquad \mbox{ then } \qquad \psi\left(\frac{x-z}{t}+\frac{z}{R_\Omega}\right)=0,
\]
i.e. for every $x\in \Omega$ and $z\in\widetilde\Omega$,
\[
\mbox{ if } \quad 0<t\le \frac{|x-z|}{1+\dfrac{|z|}{R_\Omega}}\qquad \mbox{ then } \qquad \psi\left(\frac{x-z}{t}+\frac{z}{R_\Omega}\right)=0.
\]
This implies that  for $x\in \Omega$ and $z\in \widetilde \Omega$ we get
\[
\begin{split}
\int_0^{R_\Omega/2} \,t^{-s\,p-N}\,\psi\left(\frac{x-z}{t}+\frac{z}{R_\Omega}\right)\,\frac{dt}{t}&\le \int_0^{+\infty} \,t^{-s\,p-N}\,\psi\left(\frac{x-z}{t}+\frac{z}{R_\Omega}\right)\,\frac{dt}{t}\\
&=\int_{\frac{|x-z|}{1+\frac{|z|}{R_\Omega}}}^{+\infty} \,t^{-s\,p-N}\,\psi\left(\frac{x-z}{t}+\frac{z}{R_\Omega}\right)\,\frac{dt}{t}\\
&\le \int_{\frac{|x-z|}{1+\frac{\mathrm{diam}(\widetilde\Omega)}{R_\Omega}}}^{+\infty} \,t^{-s\,p-N}\,\psi\left(\frac{x-z}{t}+\frac{z}{R_\Omega}\right)\,\frac{dt}{t}\\
&\le \frac{N+1}{\omega_N\,(N+s\,p)}\,\left(1+\frac{\mathrm{diam}(\widetilde\Omega)}{R_\Omega}\right)^{N+s\,p}\,|x-z|^{-N-s\,p}.
\end{split}
\]
Thus, we obtain
\begin{equation}
\label{secondo}
\begin{split}
\int_0^{R_\Omega/2} &\left(\frac{\|u-\psi_t\ast u_t\|_{L^{p}(\Omega)}}{t^s}\right)^{p}\,\frac{dt}{t}\\
&\le \frac{N+1}{\omega_N\,(N+s\,p)}\,\left(1+\frac{\mathrm{diam}(\widetilde\Omega)}{R_\Omega}\right)^{N+s\,p}\,\int_\Omega \int_{\widetilde\Omega} \frac{|u(x)-u(z)|^{p}}{|x-z|^{N+s\,p}}\,dx\,dz\\
&\le \frac{N+1}{\omega_N\,(N+s\,p)}\,\left(1+\frac{\mathrm{diam}(\widetilde\Omega)}{R_\Omega}\right)^{N+s\,p}\, \|u\|^{p}_{\mathcal{D}^{s,p}_0(\Omega)}.
\end{split}
\end{equation}
Observe that by construction
\[
\mathrm{diam}(\widetilde\Omega)=2\,\mathrm{diam}(\Omega_{R_\Omega/2})\le 2\,\Big(\mathrm{diam}(\Omega)+R_\Omega\Big).
\]
We now need to show that 
\begin{equation}
\label{terzo}
\int_0^{R_\Omega/2} t^{p}\,\left(\frac{\|\psi_t\ast u_t\|_{\mathcal{D}^{1,p}_0(\Omega)}}{t^s}\right)^p\,\frac{dt}{t}\le C\, \|u\|^{p}_{\mathcal{D}^{s,p}_0(\Omega)}.
\end{equation}
We first observe that 
\[
\begin{split}
\nabla \psi_t\ast u_t(x)&=\int_{\mathbb{R}^N} u\left(\frac{R_\Omega}{R_\Omega-t}\,y\right)\,\frac{1}{t^{N+1}}\,\nabla \psi\left(\frac{x-y}{t}\right)\,dy,
\end{split}
\]
and by the Divergence Theorem
\[
\int_{\mathbb{R}^N}\frac{1}{t^{N+1}}\,\nabla \psi\left(\frac{x-y}{t}\right)\,dy=0.
\]
Thus we obtain as well
\[
-\nabla \psi_t\ast u_t(x)=\int_{\mathbb{R}^N} \left[u\left(\frac{R_\Omega}{R_\Omega-t}\,x\right)-u\left(\frac{R_\Omega}{R_\Omega-t}\,y\right)\right]\,\frac{1}{t^{N+1}}\,\nabla \psi\left(\frac{x-y}{t}\right)\,dy,
\]
and by H\"older's inequality  
\[
\begin{split}
\|\psi_t\ast u_t\|_{\mathcal{D}^{1,p}_0(\Omega)}^{p}&=\int_{\mathbb{R}^N} |\nabla u_t|^{p}\,dx\\
& \le \int_{\mathbb{R}^N} \left(\int_{\mathbb{R}^N} \left|u\left(\frac{R_\Omega}{R_\Omega-t}\,x\right)-u\left(\frac{R_\Omega}{R_\Omega-t}\,y\right)\right|^{p}\,\frac{1}{t^{N+1}}\,\left|\nabla \psi\left(\frac{x-y}{t}\right)\right|\,dy\right)\\
&\times\left(\int_{\mathbb{R}^N}\frac{1}{t^{N+1}}\,\left|\nabla \psi\left(\frac{x-y}{t}\right)\right|\,dy\right)^{p-1}\,dx\\
&= \frac{\|\nabla \psi\|^{p-1}_{L^1(\mathbb{R}^N)}}{t^{p-1}}\,\int_{\mathbb{R}^N} \int_{\mathbb{R}^N} \left|u\left(\frac{R_\Omega}{R_\Omega-t}\,x\right)-u\left(\frac{R_\Omega}{R_\Omega-t}\,y\right)\right|^{p}\,\,\frac{1}{t^{N+1}}\,\left|\nabla \psi\left(\frac{x-y}{t}\right)\right|\,dy\,dx\\
&\le \frac{\|\nabla \psi\|^{p-1}_{L^1(\mathbb{R}^N)}}{t^{p-1}}\,\int_{\mathbb{R}^N} \int_{\mathbb{R}^N} \left|u\left(z\right)-u\left(w\right)\right|^{p}\,\,\frac{1}{t^{N+1}}\,\left|\nabla \psi\left(\frac{R_\Omega-t}{R_\Omega\,t}\,(z-w)\right)\right|\,dz\,dw.
\end{split}
\]
This yields 
\begin{equation}
\label{quasi6}
\begin{split}
\int_0^{R_\Omega/2} &t^{p}\,\left(\frac{\|u_t\|_{\mathcal{D}^{1,p}_0(\Omega)}}{t^s}\right)^{p}\,\frac{dt}{t}\\
&\le C\,\int_0^{R_\Omega/2} t^{-s\,p}\,\int_{\mathbb{R}^N} \int_{\mathbb{R}^N} \left|u(z)-u(w)\right|^{p}\,\,\frac{1}{t^{N}}\,\left|\nabla \psi\left(\frac{R_\Omega-t}{R_\Omega\,t}\,(z-w)\right)\right|\,dz\,dw\,\frac{dt}{t}\\
&=C\,\int_{\mathbb{R}^N} \int_{\mathbb{R}^N} |u(z)-u(w)|^{p}\,\left(\int_0^{R_\Omega/2} t^{-s\,p}\,\frac{1}{t^{N}}\,\left|\nabla \psi\left(\frac{R_\Omega-t}{R_\Omega\,t}\,(z-w)\right)\right|\,\frac{dt}{t}\right)\,dz\,dw.
\end{split}
\end{equation}
As above, we now observe that 
\[
\left|\nabla \psi\left(\frac{R_\Omega-t}{R_\Omega\,t}\,(z-w)\right)\right|\not=0\qquad \Longleftrightarrow\qquad \frac{R_\Omega-t}{R_\Omega}\,\frac{|z-w|}{t}<1,
\]
thus in particular for $0<t<R_\Omega/2$ we have
\[
\frac{1}{2}\,\frac{|z-w|}{t}>1\qquad \Longrightarrow\qquad \nabla \psi\left(\frac{R_\Omega-t}{R_\Omega\,t}\,(z-w)\right)=0.
\]
This implies that for $z,w\in\mathbb{R}^N$ we have
\[
\begin{split}
\int_0^{R_\Omega/2} t^{-s\,p}\,\frac{1}{t^{N}}\,\left|\nabla \psi\left(\frac{R_\Omega-t}{R_\Omega\,t}\,(z-w)\right)\right|\,\frac{dt}{t}&\le \int_{\frac{|z-w|}{2}}^{+\infty} t^{-s\,p}\,\frac{1}{t^{N}}\,\left|\nabla \psi\left(\frac{R_\Omega-t}{R_\Omega\,t}\,(z-w)\right)\right|\,\frac{dt}{t}\\
&\le \frac{N+1}{\omega_N\,(N+s\,p)}\,|z-w|^{-N-s\,p}.
\end{split}
\]
By inserting this estimate in \eqref{quasi6}, we now get \eqref{terzo}.
We are left with estimating the integral of the $K-$functional on $(R_\Omega/2,+\infty)$: for this, we can use the trivial decomposition 
\[
u=(u-0)+0,
\]
which gives
\[
\begin{split}
\int_{\frac{R_\Omega}{2}}^{+\infty} \left(\frac{K(t,u,L^p(\Omega),\mathcal{D}^{1,p}_0(\Omega))}{t^s}\right)^p\,\frac{dt}{t}&\le \int_{\frac{R_\Omega}{2}}^{+\infty} \frac{\|u\|_{L^p(\Omega)}^p}{t^{s\,p}}\,\frac{dt}{t}\\
&=\frac{\|u\|_{L^p(\Omega)}^p}{s\,p}\, \left(\frac{R_\Omega}{2}\right)^{-s\,p}\\
&\le \frac{2^{s\,p}}{s\,p}\,\|u\|_{\mathcal{D}^{s,p}_0(\Omega)}^p\, \left(\frac{1}{\lambda^s_p(\Omega)\,R_\Omega^{s\,p}}\right),
\end{split}
\]
where we used Poincar\'e inequality for $\mathcal{D}^{s,p}_0(\Omega)$. By recalling that for a convex set with finite inradius we have (see \cite[Corollary 5.1]{BC})
\[
\lambda^s_p(\Omega)\,R_\Omega^{s\,p}\ge \frac{\mathcal{C}}{s\,(1-s)},
\]
for a constant $\mathcal{C}=\mathcal{C}(N,p)>0$, we finally obtain
\[
\int_{\frac{R_\Omega}{2}}^{+\infty} \left(\frac{K(t,u,L^p(\Omega),\mathcal{D}^{1,p}_0(\Omega))}{t^s}\right)^p\,\frac{dt}{t}\le \frac{2^{s\,p}}{p}\,\|u\|_{\mathcal{D}^{s,p}_0(\Omega)}^p\, \left(\frac{1-s}{\mathcal{C}}\right).
\]
By using this in conjunction with \eqref{secondo} and \eqref{terzo}, we finally conclude the proof.
\end{proof}
For general unbounded convex sets, the previous proof does not work anymore. However, for convex {\it cones} the result still holds. We say that a convex set $\Omega\subset\mathbb{R}^N$ is a convex cone centered at $x_0\in\mathbb{R}^N$ if for every $x\in\Omega$ and $\tau>0$, we have
\[
x_0+\tau\,(x-x_0)\in \Omega.
\]
Then we have the following
\begin{coro}[Comparison of norms II: convex cones]
\label{coro:cones}
Let $1<p<\infty$ and $0<s<1$. If $\Omega\subset\mathbb{R}^N$ is an open convex cone centered at $x_0\in\mathbb{R}^N$,
then for every $u\in C^\infty_0(\Omega)$ we have
\[
\|u\|^p_{\mathcal{X}^{s,p}_0(\Omega)}\le C\,\|u\|^p_{\mathcal{D}_0^{s,p}(\Omega)},
\]
for a constant $C=C(N,p,\mathcal{E}(\Omega\cap B_1(x_0)))>0$. In particular, we have $\mathcal{X}^{s,p}_0(\Omega)=\mathcal{D}^{s,p}_0(\Omega)$.
\end{coro}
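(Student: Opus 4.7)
The strategy is to reduce Corollary \ref{coro:cones} to Theorem \ref{thm:convex} by truncating the cone with a ball centered at the apex, then to exploit the dilation invariance of convex cones to make the resulting constant independent of the truncation parameter. Without loss of generality, assume $x_0=0$. Given $u\in C^\infty_0(\Omega)$, the support of $u$ is a compact subset of $\Omega$, so there exists $R>0$ (possibly depending on $u$) such that $\mathrm{supp}(u)\subset \Omega_R:=\Omega\cap B_R(0)$. The set $\Omega_R$ is open, bounded, and convex, as the intersection of two open convex sets.

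Next, I would compare the interpolation norms on $\Omega$ and on $\Omega_R$. Since $\Omega_R\subset\Omega$, every $v\in C^\infty_0(\Omega_R)$ also lies in $C^\infty_0(\Omega)$, and for such $v$ one has $\|u-v\|_{L^p(\Omega)}=\|u-v\|_{L^p(\Omega_R)}$ and $\|v\|_{\mathcal{D}^{1,p}_0(\Omega)}=\|v\|_{\mathcal{D}^{1,p}_0(\Omega_R)}$ (since both $u$ and $v$ are supported in $\Omega_R$). Restricting the infimum in the $K$-functional to this smaller competitor class yields
\[
K\bigl(t,u,L^p(\Omega),\mathcal{D}^{1,p}_0(\Omega)\bigr)\le K\bigl(t,u,L^p(\Omega_R),\mathcal{D}^{1,p}_0(\Omega_R)\bigr),
\]
and integrating against $t^{-sp-1}\,dt$ gives $\|u\|_{\mathcal{X}^{s,p}_0(\Omega)}\le \|u\|_{\mathcal{X}^{s,p}_0(\Omega_R)}$. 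Applying Theorem \ref{thm:convex} to $\Omega_R$ and noting that $\|u\|_{\mathcal{D}^{s,p}_0(\Omega_R)}=[u]_{W^{s,p}(\mathbb{R}^N)}=\|u\|_{\mathcal{D}^{s,p}_0(\Omega)}$, one obtains
\[
\|u\|^p_{\mathcal{X}^{s,p}_0(\Omega)}\le C\bigl(N,p,\mathcal{E}(\Omega_R)\bigr)\,\|u\|^p_{\mathcal{D}^{s,p}_0(\Omega)}.
\]

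The crucial and only genuinely non-routine point is to verify that $\mathcal{E}(\Omega_R)$ does not depend on $R$. This is where the cone hypothesis enters: since $\Omega$ is a convex cone centered at $0$, the dilation $y\mapsto y/R$ is a bijection between $\Omega_R$ and $\Omega\cap B_1(0)$ which scales distances (hence both the diameter and the inradius) by the same factor $1/R$. Consequently $\mathcal{E}(\Omega_R)=\mathcal{E}(\Omega\cap B_1(0))$ for every admissible $R$, so the constant in the previous display depends only on $N$, $p$ and $\mathcal{E}(\Omega\cap B_1(0))$ and not on $u$. This proves \eqref{mah!conv} in the present setting, and the identity $\mathcal{X}^{s,p}_0(\Omega)=\mathcal{D}^{s,p}_0(\Omega)$ follows by combining with Proposition \ref{lm:3}. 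No serious obstacle is expected; the scaling verification for $\mathcal{E}$ is the only step worth making explicit.
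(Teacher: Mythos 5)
Your proof is correct and follows exactly the same route as the paper's: truncate with a ball $B_R(x_0)$ to get a bounded convex set, observe the monotonicity $\|u\|_{\mathcal{X}^{s,p}_0(\Omega)}\le\|u\|_{\mathcal{X}^{s,p}_0(\Omega\cap B_R(x_0))}$, invoke Theorem \ref{thm:convex}, and use the scale-invariance of the cone to show $\mathcal{E}(\Omega\cap B_R(x_0))=\mathcal{E}(\Omega\cap B_1(x_0))$. You actually spell out the $K$-functional monotonicity and the dilation argument in more detail than the paper does.
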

\begin{proof}
We assume for simplicity that $x_0=0$ and take $u\in C^\infty_0(\Omega)$. Since $u$ has compact support, we have that $u\in C^\infty_0(\Omega\cap B_R(0))$, for $R$ large enough. From the previous result, we know that
\[
\|u\|^p_{\mathcal{X}^{s,p}_0(\Omega\cap B_R(0))}\le C\,\|u\|_{\mathcal{D}^{s,p}_0(\Omega\cap B_R(0))}= C\, \|u\|_{\mathcal{D}^{s,p}_0(\Omega)}.
\]
We recall that the constant $C$ depends on the eccentricity of $\Omega\cap B_R(0)$. However, since $\Omega$ is a cone, we easily get
\[
\mathcal{E}(\Omega\cap B_R(0))=\mathcal{E}(\Omega\cap B_1(0)),\qquad \mbox{ for every } R>0,
\]
i.e. the constant $C$ is independent of $R$. Finally, by observing that 
\[
\|u\|^p_{\mathcal{X}^{s,p}_0(\Omega)}\le \|u\|^p_{\mathcal{X}^{s,p}_0(\Omega\cap B_R(0))},
\]
we get the desired conclusion.
\end{proof}
\begin{rem}[Rotationally symmetric cones]
Observe that if $\Omega$ is the rotationally symmetric convex cone
\[
\Omega=\{x\in\mathbb{R}^N\, :\, \langle x-x_0,\omega\rangle>\beta\,|x-x_0|\},\qquad \mbox{ for some }0\le \beta <1,\, x_0\in\mathbb{R}^N\mbox{ and }\omega\in\mathbb{S}^{N-1},
\]
we have
\[
\mathcal{E}(\Omega\cap B_1(0))=\frac{1}{2}\,\max\left\{2\,\sqrt{1-\beta^2},\, 1\right\}\,\left(1+\frac{1}{\sqrt{1-\beta^2}}\right),
\]
by elementary geometric considerations.
\par
In particular, when $\Omega$ is a half-space (i.e. $\beta=0$), then we have $\mathcal{E}(\Omega\cap B_1(0))=2$.
\end{rem}

\subsection{Lipschitz sets and beyond}
In this section we show that the norms of $\mathcal{X}^{s,p}_0$ and $\mathcal{D}^{s,p}_0$ are equivalent on open bounded Lipschitz sets. We also make some comments on more general sets, see Remark \ref{rem:vicious} below.
\vskip.2cm
By generalizing the idea of \cite[Theorem 11.6]{LM} (see also \cite[Theorem 2.1]{Bra}) for $p=2$ and smooth sets, we can rely on the powerful extension theorem for Sobolev functions proved by Stein and obtain the following
\begin{thm}[Comparison of norms II: Lipschitz sets]
\label{thm:finally}
Let $1<p<\infty$ and $0<s<1$. Let $\Omega\subset\mathbb{R}^N$ be an open bounded set, with Lipschitz boundary. Then for every $u\in C^\infty_0(\Omega)$ we have
\[
\|u\|^p_{\mathcal{X}^{s,p}_0(\Omega)}\le C_1\,\|u\|^p_{\mathcal{D}_0^{s,p}(\Omega)},
\]
for a constant $C_1>0$ depending on $N,p, \mathrm{diam}(\Omega)$ and the Lipschitz constant of $\partial\Omega$. In particular, we have $\mathcal{X}^{s,p}_0(\Omega)=\mathcal{D}^{s,p}_0(\Omega)$ in this case as well.
\end{thm}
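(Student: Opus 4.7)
By Proposition~\ref{lm:3}, the inclusion $\mathcal{X}^{s,p}_0(\Omega)\subset \mathcal{D}^{s,p}_0(\Omega)$ is always continuous, so only the reverse estimate
\[
\|u\|^p_{\mathcal{X}^{s,p}_0(\Omega)}\le C_1\,\|u\|^p_{\mathcal{D}^{s,p}_0(\Omega)},\qquad u\in C^\infty_0(\Omega),
\]
requires proof. The plan is to transfer this inequality to $\mathbb{R}^N$, where Proposition~\ref{lm:Rn} provides the analogous comparison, and to use Stein's extension theorem to bridge $\Omega$ and $\mathbb{R}^N$ at the level of the endpoint Sobolev spaces.

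The key preliminary observation is that, because $\Omega$ is a bounded Lipschitz set and hence supports the classical Poincar\'e inequality, $\|\nabla\cdot\|_{L^p(\Omega)}$ and $\|\cdot\|_{W^{1,p}(\Omega)}$ are equivalent on $C^\infty_0(\Omega)$. Consequently the $K$-functionals for the pairs $(L^p(\Omega),\mathcal{D}^{1,p}_0(\Omega))$ and $(L^p(\Omega),W^{1,p}(\Omega))$ are equivalent on $C^\infty_0(\Omega)$, and it is enough to estimate $\|u\|_{(L^p(\Omega),W^{1,p}(\Omega))_{s,p}}$. Stein's theorem produces a linear operator $\mathcal{E}$ that is simultaneously bounded $L^p(\Omega)\to L^p(\mathbb{R}^N)$ and $W^{1,p}(\Omega)\to W^{1,p}(\mathbb{R}^N)$ with norms depending only on $N$, $p$, $\mathrm{diam}(\Omega)$ and the Lipschitz character of $\partial\Omega$; by real interpolation it is also bounded between the corresponding interpolation spaces. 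Composing $\mathcal{E}$ with the trivial restriction (which interpolates with norm $\le 1$) and using $R\circ\mathcal{E}=\mathrm{Id}$, the zero extension $\tilde u\in C^\infty_0(\mathbb{R}^N)$ of $u$ gives
\[
\|u\|_{(L^p(\Omega),W^{1,p}(\Omega))_{s,p}}\le C\,\|\tilde u\|_{(L^p(\mathbb{R}^N),W^{1,p}(\mathbb{R}^N))_{s,p}}.
\]
A mild variant of Proposition~\ref{lm:Rn} (using the same mollification $v_t=\psi_t\ast\tilde u$ as a test function, with a tail estimate in the $K$-functional integral — obtained by choosing $v=0$ for large $t$ — to handle the extra $\|v_t\|_{L^p}$ contribution to $\|v_t\|_{W^{1,p}}$) then yields $\|\tilde u\|_{(L^p(\mathbb{R}^N),W^{1,p}(\mathbb{R}^N))_{s,p}}\le C\,\|\tilde u\|_{W^{s,p}(\mathbb{R}^N)}$. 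Finally, on the bounded set $\Omega$ the fractional Poincar\'e inequality controls $\|u\|_{L^p(\Omega)}$ by $\|u\|_{\mathcal{D}^{s,p}_0(\Omega)}$, so that
\[
\|\tilde u\|_{W^{s,p}(\mathbb{R}^N)}^p = \|u\|_{L^p(\Omega)}^p+\|u\|_{\mathcal{D}^{s,p}_0(\Omega)}^p\le C'\,\|u\|_{\mathcal{D}^{s,p}_0(\Omega)}^p,
\]
and the chain of estimates closes.

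The main technical obstacle is executing the real interpolation argument for Stein's operator in a way that is compatible with the slightly non-standard $K$-functional of the paper (where the infimum is taken over $C^\infty_0$ rather than over a dense subspace of $W^{1,p}$); this is reconciled via the density of $C^\infty_0(\Omega)$ in the relevant spaces for Lipschitz sets. A secondary delicate point is the adaptation of Proposition~\ref{lm:Rn} from the homogeneous pair $(L^p,\mathcal{D}^{1,p}_0)$ to the inhomogeneous pair $(L^p,W^{1,p})$ on $\mathbb{R}^N$, which is routine but must be handled carefully so that the divergent integral at infinity is tamed by the trivial tail bound and the constants stay under control.
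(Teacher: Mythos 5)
Your overall plan — reduce to a comparison on $\mathbb{R}^N$ via Stein's extension theorem — is the right idea, and it matches the paper's strategy. But the crucial step of your argument has a genuine gap, and it is precisely the delicate point that the paper's proof is constructed to circumvent.

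The problem is the ``reconciled via density of $C^\infty_0(\Omega)$'' step. The norm $\|u\|_{\mathcal{X}^{s,p}_0(\Omega)}$ is built from \eqref{K}, where the infimum is over $v\in C^\infty_0(\Omega)$. Since $C^\infty_0(\Omega)$ is dense in $\mathcal{D}^{1,p}_0(\Omega)$, this coincides with the genuine $K$--functional for the pair $\big(L^p(\Omega),\mathcal{D}^{1,p}_0(\Omega)\big)$. You then pass (via Poincar\'e) to the pair $\big(L^p(\Omega),W^{1,p}(\Omega)\big)$, and subsequently interpolate the restriction operator $R: L^p(\mathbb{R}^N)\to L^p(\Omega)$, $W^{1,p}(\mathbb{R}^N)\to W^{1,p}(\Omega)$. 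But interpolating $R$ in the standard way only controls the \emph{standard} interpolation norm for $\big(L^p(\Omega),W^{1,p}(\Omega)\big)$, whose $K$--functional is an infimum over \emph{all} $v\in W^{1,p}(\Omega)$. That norm is \emph{smaller} than $\|u\|_{\mathcal{X}^{s,p}_0(\Omega)}$, because the competitor set is larger: on a bounded Lipschitz $\Omega$, $C^\infty_0(\Omega)$ is dense in $W^{1,p}_0(\Omega)$ but \emph{not} in $W^{1,p}(\Omega)$, so the two $K$--functionals are genuinely different (the quotient $W^{1,p}/W^{1,p}_0$ is nontrivial). Bounding the smaller quantity from above gives no control on $\|u\|_{\mathcal{X}^{s,p}_0(\Omega)}$, which is exactly what you are trying to bound. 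The composition with $\mathcal{E}$ and the relation $R\circ\mathcal{E}=\mathrm{Id}$ do not repair this: they produce a circular inequality or, at best, the same bound on the smaller norm. In other words, the hard direction here is not extending $u$ to $\mathbb{R}^N$ (zero extension does that for free, cf.\ Remark~\ref{rem:zeroext}), but turning a good $\mathbb{R}^N$--competitor $v$ back into a competitor that \emph{vanishes outside $\Omega$}, which is the only kind the $\mathcal{X}^{s,p}_0(\Omega)$ norm accepts.

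The paper resolves exactly this by applying Stein's extension $\mathcal{T}$ to the \emph{outer annulus} $B\setminus\overline\Omega$ rather than to $\Omega$ itself, and defining $\mathcal{R}(v)=v-\mathcal{T}(v)$. By construction $\mathcal{R}(v)$ vanishes on $B\setminus\overline\Omega$, hence lies in $\mathcal{D}^{1,p}_0(\Omega)$ (using that $\partial\Omega$ is continuous), so it is an admissible competitor for the $\Omega$--based $K$--functional; meanwhile $\mathcal{R}(u)=u$ for $u\in C^\infty_0(\Omega)$, and the two endpoint bounds \eqref{stimajones} show that $\mathcal{R}$ is bounded between the appropriate pairs. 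This produces $\|u\|_{\mathcal{X}^{s,p}_0(\Omega)}\le \gamma_\Omega\|u\|_{\mathcal{X}^{s,p}_0(B)}$, and then the convex-set comparison Theorem~\ref{thm:convex} for the ball $B$ finishes the job. You should also note the subtle point flagged in Remark~\ref{rem:vicious}: one needs the extension operator to be bounded $L^p\to L^p$ \emph{with only the $L^p$ norm on the right-hand side} (the first estimate in \eqref{stimajones}), which is not a formal consequence of being a $W^{1,p}$--extension and does fail for some extension constructions (e.g.\ Jones'). Without that endpoint bound, the interpolation of $\mathcal{R}$ cannot be carried out. Your sketch does not engage with either of these two issues, and both are load-bearing.
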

\begin{proof}
We take an open ball $B\subset\mathbb{R}^N$ with radius $\mathrm{diam}(\Omega)$ and such that $\Omega\Subset B$.
We then take a linear and continuous extension operator
\[
\mathcal{T}: W^{1,p}(B\setminus\overline{\Omega})\to W^{1,p}(B),
\]
such that 
\begin{equation}
\label{stimajones}
\left\{\begin{array}{rcl}
\|\mathcal{T}(u)\|_{L^p(B)}&\le& \mathfrak{e}_\Omega\,\|u\|_{L^p(B)},\\
&&\\
\|\nabla \mathcal{T}(u)\|_{L^p(B)}&\le& \mathfrak{e}_\Omega\,\|u\|_{W^{1,p}(B)},
\end{array}
\right.
\end{equation}
where $\mathfrak{e}_\Omega>0$ depends on $N,p,\varepsilon,\delta$ and $\mathrm{diam}(\Omega)$.
We observe that such an operator exists, thanks to the fact that $\Omega$ has a Lipschitz boundary, see \cite[Theorem 5, page 181]{St}. We also observe that the first estimate in \eqref{stimajones} is not explicitly stated by Stein, but it can be extrapolated by having a closer look at the proof, see \cite[page 192]{St}.
\par
For every $v\in C^\infty_0(B)$, we define the operator 
\[
\mathcal{R}(v)=v-\mathcal{T}(v),
\]
and observe that 
\[
\mathcal{R}(v)\equiv 0 \mbox{ in }B\setminus \overline{\Omega}\qquad \mbox{ and }\qquad \mathcal{R}(v)\in W^{1,p}(B).
\]
Since $\Omega$ has continuous boundary, this implies that $\mathcal{R}(v)\in\mathcal{D}^{1,p}_0(\Omega)$, see Remark \ref{rem:density}. We now fix $u\in C^\infty_0(\mathbb{R}^N)$, for every $v\in C^\infty_0(B)$ and every $\varepsilon>0$, we take $\varphi_\varepsilon\in C^\infty_0(\Omega)$ such that
\[
\Big(\lambda^1_p(\Omega)\Big)^\frac{1}{p}\,\|\varphi_\varepsilon-\mathcal{R}(v)\|_{L^p(\Omega)}\le \|\nabla \varphi_\varepsilon-\nabla \mathcal{R}(v)\|_{L^p(\Omega)}<\varepsilon.
\]
This is possible, thanks to the definition of $\mathcal{D}^{1,p}_0(\Omega)$.
Then for $t>0$ we can estimate the relevant $K-$functional as follows
\[
\begin{split}
K(t,\mathcal{R}(u),L^p(\Omega),\mathcal{D}^{1,p}_0(\Omega))&\le \|\mathcal{R}(u)-\varphi_\varepsilon\|_{L^p(\Omega)}+t\,\|\nabla \varphi_\varepsilon\|_{L^p(\Omega)}\\
&\le \|\mathcal{R}(u)-\mathcal{R}(v)\|_{L^p(\Omega)}+\|\mathcal{R}(v)-\varphi_\varepsilon\|_{L^p(\Omega)}\\
&+t\,\|\nabla \mathcal{R}(v)\|_{L^p(\Omega)}+t\,\|\nabla \varphi_\varepsilon-\nabla \mathcal{R}(v)\|_{L^p(\Omega)}\\
&\le \|\mathcal{R}(u-v)\|_{L^p(\Omega)}+t\,\|\nabla \mathcal{R}(v)\|_{L^p(\Omega)}+\varepsilon\,\left(1+\left(\lambda^1_p(\Omega)\right)^{-\frac{1}{p}}\right)\\
&\le \|u-v\|_{L^p(\Omega)}+\|\mathcal{T}(u-v)\|_{L^p(\Omega)}\\
&+t\,\left(\|\nabla v\|_{L^p(\Omega)}+\|\nabla \mathcal{T}(v)\|_{L^p(\Omega)}\right)+\varepsilon\,\left(1+\left(\lambda^1_p(\Omega)\right)^{-\frac{1}{p}}\right).
\end{split}
\]
By applying \eqref{stimajones}, we then get
\[
\begin{split}
K(t,\mathcal{R}(u),L^p(\Omega),\mathcal{D}^{1,p}_0(\Omega))&\le (1+\mathfrak{e}_\Omega)\,\|u-v\|_{L^p(B)}+t\,\left(\|\nabla v\|_{L^p(B)}+\mathfrak{e}_\Omega\,\|v\|_{W^{1,p}(B)}\right)\\
&+\varepsilon\,\left(1+\left(\lambda^1_p(\Omega)\right)^{-\frac{1}{p}}\right).
\end{split}
\]
We now use that 
\[
\|v\|_{W^{1,p}(B)}=\left(\|v\|^p_{L^p(B)}+\|\nabla v\|^p_{L^p(B)}\right)^\frac{1}{p}\le \|\nabla v\|_{L^p(B)}\,\left(1+\frac{1}{\lambda^1_p(B)}\right)^\frac{1}{p},
\]
thanks to Poincar\'e inequality. By spending this information in the previous estimate and using the arbitrariness of $\varepsilon$, we get
\[
\begin{split}
K(t,\mathcal{R}(u),L^p(\Omega),\mathcal{D}^{1,p}_0(\Omega))&\le (1+\mathfrak{e}_\Omega)\,\|u-v\|_{L^p(B)}\\
&+t\,\left(1+\mathfrak{e}_\Omega\,\left(1+\frac{1}{\lambda^1_p(B)}\right)^\frac{1}{p}\right)\,\|\nabla v\|_{L^p(B)}.
\end{split}
\]
We set for simplicity
\[
\gamma_\Omega=1+\mathfrak{e}_\Omega\,\left(1+\frac{1}{\lambda^1_p(B)}\right)^\frac{1}{p},
\]
then by taking the infimum over $v\in C^\infty_0(B)$
\[
K(t,\mathcal{R}(u),L^p(\Omega),\mathcal{D}^{1,p}_0(\Omega))\le \gamma_\Omega\,K(t,u,L^p(\Omega),\mathcal{D}^{1,p}_0(\Omega)).
\] 
As usual, we integrate in $t$, so to get
\begin{equation}
\label{approx}
\|\mathcal{R}(u)\|^p_{\mathcal{X}^{s,p}_0(\Omega)}\le \gamma_\Omega^p\,\|u\|^p_{\mathcal{X}^{s,p}_0(B)},\qquad \mbox{ for } u\in C^\infty_0(\mathbb{R}^N).
\end{equation}
We now observe that if $u\in C^\infty_0(\Omega)$, then we have $\mathcal{R}(u)=u$. Thus from \eqref{approx} and Theorem \ref{thm:convex} for the convex set $B$, we get
\[
\|u\|^p_{\mathcal{X}^{s,p}_0(\Omega)}\le C\,\gamma_\Omega^p\,\|u\|^p_{\mathcal{D}^{s,p}_0(\mathbb{R}^N)}=C\,\gamma_\Omega^p\,\|u\|^p_{\mathcal{D}^{s,p}_0(\Omega)},\qquad \mbox{ for every }u\in C^\infty_0(\Omega),
\]
where $C$ only depends on $N$ and $p$.
This concludes the proof.
\end{proof}
\begin{rem}[More general sets]
\label{rem:vicious}
It is not difficult to see that the previous proof works (and thus $\mathcal{X}^{s,p}_0(\Omega)$ and $\mathcal{D}^{s,p}_0(\Omega)$ are equivalent), whenever the set $\Omega$ is such that there exists a linear and continuous extension operator 
\[
\mathcal{T}: W^{1,p}(B\setminus\overline{\Omega})\to W^{1,p}(B),
\]
such that \eqref{stimajones} holds. Observe that there is a vicious subtility here: the first condition in \eqref{stimajones} is vital and, in general, {\it it may fail to hold for an extension operator}. For example, there is a beautiful extension result by Jones \cite[Theorem 1]{Jo}, which is valid for very irregular domains (possibly having a fractal boundary): however, the construction given by Jones does not assure that the first estimate in \eqref{stimajones} holds true, see the statement of \cite[Lemma 3.2]{Jo}. 

\end{rem}
In order to complement the discussion of Remarks \ref{rem:zeroext} and \ref{rem:nightmare} on ``extensions by zero'' in $\mathcal{X}^{s,p}_0$, we explicitly state the consequence of \eqref{approx}.
\begin{coro}
Let $1<p<\infty$ and $0<s<1$. Let $\Omega\subset\mathbb{R}^N$ be an open bounded set as in Theorem \ref{thm:finally}. Then for every $u\in C^\infty_0(\Omega)$, there holds
\[
\|u\|^p_{\mathcal{X}^{s,p}_0(\Omega)}\le C\, \|u\|^p_{\mathcal{X}^{s,p}_0(\mathbb{R}^N)},
\]
for a constant $C=C(N,p,\varepsilon,\delta)>0$.
\end{coro}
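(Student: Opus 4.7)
The plan is to extract this as a direct consequence of the inequality \eqref{approx} already derived in the course of proving Theorem \ref{thm:finally}. Recall that \eqref{approx} states
\[
\|\mathcal{R}(u)\|^p_{\mathcal{X}^{s,p}_0(\Omega)}\le \gamma_\Omega^p\,\|u\|^p_{\mathcal{X}^{s,p}_0(B)}, \qquad u\in C^\infty_0(\mathbb{R}^N),
\]
where $B$ is the auxiliary ball with $\Omega\Subset B$ and $\mathcal{R}(v)=v-\mathcal{T}(v)$. The first step is the observation that if $u\in C^\infty_0(\Omega)$, then its restriction to $B\setminus\overline{\Omega}$ vanishes identically; by linearity of the extension operator $\mathcal{T}$ this forces $\mathcal{T}(u)=0$, so $\mathcal{R}(u)=u$. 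Feeding this back into \eqref{approx} gives
\[
\|u\|^p_{\mathcal{X}^{s,p}_0(\Omega)}\le \gamma_\Omega^p\,\|u\|^p_{\mathcal{X}^{s,p}_0(B)}.
\]

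It then remains to replace $\|u\|^p_{\mathcal{X}^{s,p}_0(B)}$ by $\|u\|^p_{\mathcal{X}^{s,p}_0(\mathbb{R}^N)}$. The obvious inclusion from Remark \ref{rem:zeroext} goes the wrong way here, so I would route through the Sobolev--Slobodecki\u{\i} seminorm. Since $B$ is a bounded convex set, Theorem \ref{thm:convex} gives
\[
\|u\|^p_{\mathcal{X}^{s,p}_0(B)}\le C(N,p)\,\|u\|^p_{\mathcal{D}^{s,p}_0(B)}=C(N,p)\,[u]^p_{W^{s,p}(\mathbb{R}^N)},
\]
the last equality using that $u$ is supported inside $\Omega\subset B$. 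Then Proposition \ref{lm:3} applied to the full space $\mathbb{R}^N$ (equivalently, Proposition \ref{lm:Rn}) yields
\[
[u]^p_{W^{s,p}(\mathbb{R}^N)}=\|u\|^p_{\mathcal{D}^{s,p}_0(\mathbb{R}^N)}\le C(N,p)\,\|u\|^p_{\mathcal{X}^{s,p}_0(\mathbb{R}^N)}.
\]
Concatenating these three bounds proves the corollary, with a final constant of the form $C(N,p)\,\gamma_\Omega^p$, which depends on exactly the same data as the constant $C_1$ in Theorem \ref{thm:finally}.

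No genuine obstacle arises: all the analytic weight has already been spent, on the one hand in \eqref{approx} (which in turn rests on Stein's extension theorem), and on the other hand in Theorem \ref{thm:convex} (the norm equivalence on bounded convex sets). The only slightly subtle point of the present argument is recognising the identity $\mathcal{R}(u)=u$ for $u\in C^\infty_0(\Omega)$, which is precisely what converts \eqref{approx} from a statement about a general $u\in C^\infty_0(\mathbb{R}^N)$ into the desired one-sided bound on the interpolation norm of a function compactly supported inside $\Omega$.
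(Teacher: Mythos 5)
Your proof is correct and is the natural way to make precise the paper's terse assertion that the corollary is a ``consequence of \eqref{approx}'': you rightly notice that \eqref{approx} alone only controls $\|u\|_{\mathcal{X}^{s,p}_0(B)}$, so the passage through Theorem \ref{thm:convex} applied to the ball $B$ together with Proposition \ref{lm:3} applied on $\mathbb{R}^N$ is genuinely needed, and your identification of $\mathcal R(u)=u$ for $u\in C^\infty_0(\Omega)$ is exactly the point that makes the argument run. A marginally shorter but equivalent route is to cite Theorem \ref{thm:finally} directly, since it already encapsulates your first two steps, and then conclude with Proposition \ref{lm:3} on $\mathbb{R}^N$.
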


\section{Capacities} 
\label{sec:capacities}
Let $1<p<\infty$ and $0<s<1$ be such that\footnote{As usual, the restriction $s\,p<N$ is due to the scaling properties of the relevant energies. It is not difficult to see that for $s\,p\ge N$, both infima are identically $0$.} $s\,p<N$. For every compact set $F\subset\mathbb{R}^N$, we define the {\it $(s,p)-$capacity of $F$ }
\[
\mathrm{cap}_{s,p}(F)=\inf_{u\in C^\infty_0(\mathbb{R}^N)}\left\{[u]^p_{W^{s,p}(\mathbb{R}^N)}\, :\, u\ge 0\mbox{ and } u\ge 1_F\right\},
\]
and the {\it interpolation $(s,p)-$capacity of $F$}
\[
\mathrm{int\,cap}_{s,p}(F)=\inf_{u\in C^\infty_0(\mathbb{R}^N)}\left\{\|u\|^p_{\mathcal{X}^{s,p}_0(\mathbb{R}^N)}\, :\, u\ge 0 \mbox{ and }u\ge 1_F\right\}.
\]
As a straightforward consequence of Propositions \ref{lm:3} and \ref{lm:Rn}, we have the following
\begin{coro}[Comparison of capacities]
\label{coro:zerocap}
Let $1<p<\infty$ and $0<s<1$ be such that $s\,p<N$. Let $F\subset\mathbb{R}^N$ be a compact set, then we have
\[
\frac{1}{C}\,\mathrm{cap}_{s,p}(F)\le \mathrm{int\,cap}_{s,p}(F)\le C\,\mathrm{cap}_{s,p}(F),
\]
for a constant $C=C(N,p)>1$.
In particular, it holds
\[
\mathrm{cap}_{s,p}(F)=0\qquad \mbox{ if and only if }\qquad \mathrm{int\,cap}_{s,p}(F)=0.
\]
\end{coro}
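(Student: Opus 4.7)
The plan is to observe that the two capacities $\mathrm{cap}_{s,p}(F)$ and $\mathrm{int\,cap}_{s,p}(F)$ are defined as infima of two different norms over \emph{exactly the same} admissible class, namely
\[
\mathcal{A}_F := \bigl\{u\in C^\infty_0(\mathbb{R}^N)\ :\ u\ge 0,\ u\ge 1_F\bigr\}.
\]
Specifically, $\mathrm{cap}_{s,p}(F) = \inf_{u\in\mathcal{A}_F}\|u\|^p_{\mathcal{D}^{s,p}_0(\mathbb{R}^N)}$ (by definition of the seminorm $[\,\cdot\,]_{W^{s,p}(\mathbb{R}^N)}$) and $\mathrm{int\,cap}_{s,p}(F) = \inf_{u\in\mathcal{A}_F}\|u\|^p_{\mathcal{X}^{s,p}_0(\mathbb{R}^N)}$. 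Thus once we know that the two norms are equivalent on $C^\infty_0(\mathbb{R}^N)$ with constants depending only on $N$ and $p$, the comparison of capacities follows immediately by taking the infimum on both sides of a pointwise norm inequality.

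The equivalence of norms on $\Omega=\mathbb{R}^N$ is already in hand: Proposition \ref{lm:3} applied with $\Omega=\mathbb{R}^N$ gives, for every $u\in C^\infty_0(\mathbb{R}^N)$,
\[
\|u\|^p_{\mathcal{D}^{s,p}_0(\mathbb{R}^N)} \le 2^{p(1-s)}\,N\,\omega_N\,\|u\|^p_{\mathcal{X}^{s,p}_0(\mathbb{R}^N)},
\]
while Proposition \ref{lm:Rn} furnishes the reverse
\[
\|u\|^p_{\mathcal{X}^{s,p}_0(\mathbb{R}^N)}\le \bigl(N(N+1)\bigr)^p\,\frac{2^p}{N\,\omega_N}\,\|u\|^p_{\mathcal{D}^{s,p}_0(\mathbb{R}^N)}.
\]
Since $0<s<1$, the $s$-dependent factor $2^{p(1-s)}$ is bounded by $2^p$, so both multiplicative constants depend on $N$ and $p$ only. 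Setting $C:=\max\{2^p\,N\,\omega_N,\,(N(N+1))^p\,2^p/(N\omega_N)\}$ yields a single constant that works for both sides.

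Taking the infimum over $u\in\mathcal{A}_F$ in each inequality then produces
\[
\mathrm{cap}_{s,p}(F)\le C\,\mathrm{int\,cap}_{s,p}(F)\quad\text{and}\quad \mathrm{int\,cap}_{s,p}(F)\le C\,\mathrm{cap}_{s,p}(F),
\]
which is the desired two-sided estimate. The final equivalence of null sets is then immediate: if one capacity vanishes, the chain of inequalities forces the other to vanish as well. There is no genuine obstacle here—the content of the corollary is entirely encoded in Propositions \ref{lm:3} and \ref{lm:Rn}, and the only mild point worth flagging is that the $s$-dependence in the constants is harmless under the hypothesis $0<s<1$, so the final $C$ can indeed be chosen to depend only on $N$ and $p$.
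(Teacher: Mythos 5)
Your proof is correct and follows exactly the approach the paper intends: the corollary is stated as ``a straightforward consequence of Propositions \ref{lm:3} and \ref{lm:Rn},'' and you correctly observe that both capacities are infima over the same admissible class, apply the two-sided norm equivalence on $\mathbb{R}^N$ from those propositions, absorb the $s$-dependence via $2^{p(1-s)}\le 2^p$, and pass to the infimum. Nothing is missing.
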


\begin{prop}
\label{prop:submodular}
Let $1<p<\infty$ and $0<s<1$ be such that $s\,p<N$. For every $E,F\subset\mathbb{R}^N$ compact sets, we have
\[
\mathrm{cap}_{s,p}(E\cup F)\le \mathrm{cap}_{s,p}(E)+\mathrm{cap}_{s,p}(F).
\]
\end{prop}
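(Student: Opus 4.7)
The plan is to prove subadditivity by taking near-optimal smooth admissible functions for $E$ and $F$, combining them via the pointwise maximum, and smoothing the result back into $C^\infty_0(\mathbb{R}^N)$.

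First, I would fix $\varepsilon>0$ and select nonnegative $u,v\in C^\infty_0(\mathbb{R}^N)$ with $u\ge 1_E$, $v\ge 1_F$, and
\[
[u]^p_{W^{s,p}(\mathbb{R}^N)}<\mathrm{cap}_{s,p}(E)+\varepsilon,\qquad [v]^p_{W^{s,p}(\mathbb{R}^N)}<\mathrm{cap}_{s,p}(F)+\varepsilon.
\]
The key pointwise identity I would use is that, since the maximum is $1$-Lipschitz in the $\ell^\infty$-norm on $\mathbb{R}^2$,
\[
\bigl|\max(a,c)-\max(b,d)\bigr|\le \max\bigl(|a-b|,|c-d|\bigr),\qquad a,b,c,d\in\mathbb{R},
\]
hence $|\max(a,c)-\max(b,d)|^p\le |a-b|^p+|c-d|^p$. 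Applying this with $a=u(x),\ b=u(y),\ c=v(x),\ d=v(y)$ and integrating against the kernel $|x-y|^{-N-sp}\,dx\,dy$ immediately gives
\[
[\max(u,v)]^p_{W^{s,p}(\mathbb{R}^N)}\le [u]^p_{W^{s,p}(\mathbb{R}^N)}+[v]^p_{W^{s,p}(\mathbb{R}^N)}.
\]

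The function $w:=\max(u,v)$ is nonnegative, Lipschitz, compactly supported, and satisfies $w\ge 1_{E\cup F}$. The main obstacle is that $w\notin C^\infty_0(\mathbb{R}^N)$, so it is not directly admissible in the definition of $\mathrm{cap}_{s,p}(E\cup F)$. To handle this, I would introduce a cushion by setting $w_\delta:=(1+\delta)\,w$, which satisfies $w_\delta\ge 1+\delta$ on $E\cup F$, and then regularize by convolution, $w_{\delta,\rho}:=w_\delta\ast \eta_\rho$ with $\eta_\rho$ a standard nonnegative mollifier. Since $w_\delta$ is uniformly continuous, for $\rho$ sufficiently small one still has $w_{\delta,\rho}\ge 1$ on $E\cup F$ and $w_{\delta,\rho}\ge 0$, while compact support of $w_\delta$ ensures $w_{\delta,\rho}\in C^\infty_0(\mathbb{R}^N)$ for small $\rho$. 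The standard approximation property $[\varphi\ast\eta_\rho]_{W^{s,p}(\mathbb{R}^N)}\to [\varphi]_{W^{s,p}(\mathbb{R}^N)}$ as $\rho\to 0$, valid for any $\varphi\in W^{s,p}(\mathbb{R}^N)$, then gives
\[
[w_{\delta,\rho}]^p_{W^{s,p}(\mathbb{R}^N)}\longrightarrow (1+\delta)^p\,[w]^p_{W^{s,p}(\mathbb{R}^N)}\quad\text{as }\rho\to 0.
\]

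Putting everything together, $w_{\delta,\rho}$ is admissible for $E\cup F$, hence
\[
\mathrm{cap}_{s,p}(E\cup F)\le [w_{\delta,\rho}]^p_{W^{s,p}(\mathbb{R}^N)}.
\]
Letting first $\rho\to 0$, then $\delta\to 0$, and finally $\varepsilon\to 0$ yields the desired subadditivity. The hardest bookkeeping point is ensuring that the mollification does not destroy the constraint $w\ge 1_{E\cup F}$; this is precisely why one needs the auxiliary dilation $(1+\delta)w$, which absorbs the mollification error at the cost of an extra factor $(1+\delta)^p$ that disappears in the limit.
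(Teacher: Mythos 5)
Your proof is correct, and it differs from the paper's in two ways worth noting. First, to show $[\max(u,v)]^p_{W^{s,p}}\le [u]^p_{W^{s,p}}+[v]^p_{W^{s,p}}$ the paper invokes the submodularity of the Sobolev--Slobodecki\u{\i} seminorm proved by Gigli--Mosconi (which gives the stronger statement with the $\min$ term included), whereas you derive exactly the inequality you need from the elementary fact that $(a,c)\mapsto\max(a,c)$ is $1$-Lipschitz in the $\ell^\infty$-norm. Your route is more self-contained and buys you independence from an external reference, at the (small) cost of not observing the full submodularity. Second, and this is the more substantive point: the paper mollifies $\max\{\varphi_n,\psi_n\}$ directly and asserts $U_{n,\varepsilon}\ge 1_{E\cup F}$, but from $\varphi_n\ge 1_E$ and $\psi_n\ge 1_F$ one only knows the maximum is $\ge 1$ \emph{on} $E\cup F$, not on a neighborhood of it, so mollification a priori can push the value below $1$ on $\partial(E\cup F)$. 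Your insertion of the $(1+\delta)$-dilation before mollifying is precisely what repairs this — by continuity and compactness $(1+\delta)w>1$ on an open neighborhood of $E\cup F$, so small mollification preserves the constraint — and the extra factor $(1+\delta)^p$ vanishes in the limit. In that sense your argument is actually the more careful of the two.
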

\begin{proof}
We fix $n\in\mathbb{N}\setminus\{0\}$ and choose two non-negative functions $\varphi_n,\psi_n\in C^\infty_0(\mathbb{R}^N)$ such that
\[
[\varphi_n]^p_{W^{s,p}(\mathbb{R}^N)}\le \mathrm{cap}_{s,p}(E)+\frac{1}{n},\qquad \varphi_n\ge 1_E,
\]
and
\[
[\psi_n]^p_{W^{s,p}(\mathbb{R}^N)}\le \mathrm{cap}_{s,p}(F)+\frac{1}{n},\qquad \psi_n\ge 1_F.
\]
We then set
\[
U_{n,\varepsilon}=\Big(\max\{\varphi_n,\psi_n\}\Big)\ast \varrho_\varepsilon,\qquad 0<\varepsilon\ll 1,
\]
where $\{\varrho_\varepsilon\}_{\varepsilon>0}$ is a family of standard Friedrichs mollifiers. We observe that for every $n\in\mathbb{N}\setminus\{0\}$, it holds that $U_{n,\varepsilon}\in C^\infty_0(\mathbb{R}^N)$. Moreover, by construction we have
\[
U_{n,\varepsilon}\ge 1_{E\cup F}.
\]
By observing that Jensen's inequality implies
\[
[U_{n,\varepsilon}]_{W^{s,p}(\mathbb{R}^N)}\le \Big[\max\{\varphi_n,\psi_n\}\Big]^p_{W^{s,p}(\mathbb{R}^N)},
\]
we thus get
\[
\mathrm{cap}_{s,p}(E\cup F)\le [U_{n,\varepsilon}]_{W^{s,p}(\mathbb{R}^N)}\le \Big[\max\{\varphi_n,\psi_n\}\Big]^p_{W^{s,p}(\mathbb{R}^N)}.
\]
By using the submodularity of the Sobolev-Slobodeck\u{\i}i seminorm (see \cite[Theorem 3.2 \& Remark 3.3]{GM}), we obtain
\[
\mathrm{cap}_{s,p}(E\cup F)\le [\varphi_n]^p_{W^{s,p}(\mathbb{R}^N)}+[\psi_n]^p_{W^{s,p}(\mathbb{R}^N)}.
\]
Finally, thanks to the choice of $\varphi_n$ and $\psi_n$, we get the desired conclusion by the arbitrariness of $n$.
\end{proof}

\begin{prop}
\label{prop:capdim}
Let $1<p<\infty$ and $0<s<1$ be such that $s\,p<N$. Let $\Omega\subset\mathbb{R}^N$ be an open set. We take a compact set $E\Subset\Omega$ such that
\[
\mathrm{cap}_{s,p}(E)=0.
\]
Then we have
\begin{equation}
\label{hausdorff}
\mathcal{H}^\tau(E)=0\qquad \mbox{ for every }\tau>N-s\,p,
\end{equation}
and
\begin{equation}
\label{cap_poincare}
\lambda^s_p(\Omega\setminus E)=\lambda^s_p(\Omega).
\end{equation}
\end{prop}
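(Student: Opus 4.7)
The two assertions have somewhat different flavors, so I would handle them separately. For \eqref{hausdorff} the idea is a contradiction argument via Frostman's lemma. Suppose $\mathcal{H}^\tau(E) > 0$ for some $\tau > N - s\,p$. Frostman's lemma produces a non-trivial positive Radon measure $\mu$ supported on $E$ satisfying the growth condition $\mu(B_r(x)) \le C\,r^\tau$ for every $x \in \mathbb{R}^N$ and $r > 0$. The standard Adams-type trace inequality for fractional Sobolev spaces then supplies an exponent $q \ge 1$ and a constant $C > 0$ with
\[
\left(\int |\varphi|^q\,d\mu\right)^{1/q} \le C\,[\varphi]_{W^{s,p}(\mathbb{R}^N)}, \qquad \varphi \in C_0^\infty(\mathbb{R}^N).
\]
For any admissible $\varphi \ge 1_E$, this inequality combined with $\int \varphi\,d\mu \ge \mu(E)$ and H\"older's inequality on the support of $\mu$ would yield $\mu(E)^{1/q} \le C\,[\varphi]_{W^{s,p}(\mathbb{R}^N)}$; taking the infimum over admissible $\varphi$ forces $\mu(E)=0$, a contradiction.

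For \eqref{cap_poincare} the inequality $\lambda^s_p(\Omega \setminus E) \ge \lambda^s_p(\Omega)$ is immediate since $C_0^\infty(\Omega \setminus E) \subset C_0^\infty(\Omega)$. For the reverse inequality the plan is to approximate an arbitrary $u \in C_0^\infty(\Omega)$ by elements of $C_0^\infty(\Omega \setminus E)$ simultaneously in $L^p$ and in the $W^{s,p}(\mathbb{R}^N)$ seminorm. I would start from a sequence of non-negative $\varphi_n \in C_0^\infty(\mathbb{R}^N)$ with $\varphi_n \ge 1_E$ and $[\varphi_n]_{W^{s,p}(\mathbb{R}^N)} \to 0$ (available by the capacity hypothesis), and post-compose with a fixed smooth non-decreasing $h\colon [0,\infty) \to [0,1]$ equal to $0$ near the origin and to $1$ on $[3/4,\infty)$. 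The cutoffs $\eta_n := h(\varphi_n) \in C_0^\infty(\mathbb{R}^N)$ then satisfy $0 \le \eta_n \le 1$, $\eta_n \equiv 1$ on an open neighborhood of $E$, $[\eta_n]_{W^{s,p}(\mathbb{R}^N)} \to 0$ (since $h$ is Lipschitz), and, thanks to the subcritical Sobolev embedding available because $s\,p < N$, also $\|\eta_n\|_{L^{Np/(N-s\,p)}(\mathbb{R}^N)} \to 0$.

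The functions $u_n := (1 - \eta_n)\,u$ then belong to $C_0^\infty(\Omega \setminus E)$: their support is a compact subset of $\Omega \setminus E$, because $u$ is supported in $\Omega$ and $\eta_n \equiv 1$ on a neighborhood of $E$. The convergence $u_n \to u$ in $L^p(\Omega)$ is straightforward via H\"older's inequality on $\mathrm{supp}(u)$. The main technical point is to show that $[u\,\eta_n]_{W^{s,p}(\mathbb{R}^N)} \to 0$. After splitting
\[
u(x)\,\eta_n(x) - u(y)\,\eta_n(y) = u(x)\,\big[\eta_n(x) - \eta_n(y)\big] + \eta_n(y)\,\big[u(x) - u(y)\big],
\]
the first term is controlled by $\|u\|_\infty^p\,[\eta_n]_{W^{s,p}(\mathbb{R}^N)}^p$, while the second reduces to
\[
\int_{\mathbb{R}^N} |\eta_n(y)|^p\,M(y)\,dy, \qquad M(y) := \int_{\mathbb{R}^N} \frac{|u(x) - u(y)|^p}{|x-y|^{N+s\,p}}\,dx.
\]
Since $u$ is smooth with compact support, $M$ is bounded and decays like $|y|^{-N-s\,p}$ at infinity, so it lies in $L^{N/(s\,p)}(\mathbb{R}^N)$. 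Applying H\"older with conjugate exponents $N/(s\,p)$ and $N/(N-s\,p)$ bounds the second term by $\|M\|_{L^{N/(s\,p)}}\,\|\eta_n\|_{L^{Np/(N-s\,p)}}^p$, which tends to zero. The delicate step is precisely this non-local cross term: the straightforward local product-rule argument fails in the fractional setting, and it is only the subcritical Sobolev embedding together with the decay of $M$ that closes the estimate. Once both convergences are in place, the Rayleigh quotient of $u_n$ converges to that of $u$, giving $\lambda^s_p(\Omega \setminus E) \le \lambda^s_p(\Omega)$ and hence equality.
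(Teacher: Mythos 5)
Your proof is correct, and it takes a genuinely different route from the paper's on both parts.

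For \eqref{hausdorff}, the paper simply points to an adaptation of the direct covering/maximal-function argument of Evans--Gariepy for the $p$-capacity, whereas you argue by contradiction through Frostman's lemma and a fractional Adams-type trace inequality for Frostman measures of dimension $\tau>N-s\,p$. Both are classical; your route is conceptually crisp but imports the trace inequality as a black box, while the EG adaptation is more elementary.

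For \eqref{cap_poincare}, the differences are more substantive and work in your favor. The paper fixes a near-minimizer $u_\varepsilon$, invokes the $L^\infty$ regularity of minimizers (from \cite{BLP}, hence the preliminary reduction to bounded $\Omega$) to ensure a uniform $\|u_\varepsilon\|_\infty$ bound, normalizes the capacitary test function to $\psi_\varepsilon=\varphi_\varepsilon/\|\varphi_\varepsilon\|_{L^\infty}$, and applies the Leibniz-type estimate $[fg]\le\|f\|_\infty[g]+\|g\|_\infty[f]$ to $u_\varepsilon(1-\psi_\varepsilon)$. You instead work with an \emph{arbitrary} $u\in C^\infty_0(\Omega)$ — so $\|u\|_\infty<\infty$ is automatic and the regularity/truncation machinery and the bounded-$\Omega$ reduction both disappear — and build the cutoff as $\eta_n=h(\varphi_n)$ with $h$ smooth, $\equiv 0$ near $0$ and $\equiv 1$ on $[3/4,\infty)$; this guarantees $\eta_n\equiv 1$ on an open neighborhood of $E$, which makes the support statement $u_n=(1-\eta_n)u\in C^\infty_0(\Omega\setminus E)$ transparent (notice that in the paper's construction $\psi_\varepsilon$ need not equal $1$ on $E$, which is a small inaccuracy your cutoff avoids). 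For the cross-term $\int|\eta_n(y)|^p M(y)\,dy$ you replace the $L^\infty$--Leibniz estimate with H\"older against $M\in L^{N/(s\,p)}$ and the critical Sobolev norm $\|\eta_n\|_{L^{Np/(N-sp)}}\to 0$; this is exactly where $s\,p<N$ is used, and the decay estimate $M(y)\sim|y|^{-N-sp}$ you quote is correct. The net effect is an argument at the level of arbitrary test functions rather than near-minimizers, valid directly on unbounded $\Omega$, and independent of the nonlocal eigenvalue regularity theory.

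Two small remarks worth adding if you write this up fully: (i) after Frostman, the H\"older step is not actually needed — $\varphi\ge 1$ on $\operatorname{supp}\mu$ already gives $\mu(E)\le\int\varphi^q\,d\mu$ directly; (ii) specify the admissible exponent $q$ in the Adams trace inequality (any $q$ with $\tau p/(N-sp)\ge q\ge 1$ works under $\tau>N-sp$), since the inequality does not hold for all $q$.
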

\begin{proof}
To prove \eqref{hausdorff}, we can easily adapt the proof of \cite[Theorem 4, page 156]{EG}, dealing with the local case. 
\vskip.2cm\noindent
In order to prove \eqref{cap_poincare}, we first assume $\Omega$ to be bounded.
Let $\varepsilon>0$, we take $u_\varepsilon\in C^\infty_0(\Omega)$ such that
\[
\|u_\varepsilon\|_{\mathcal{D}^{s,p}_0(\Omega)}^p<(1+\varepsilon)\,\lambda^s_p(\Omega)\qquad \mbox{ and }\qquad \int_\Omega |u_\varepsilon|^p\,dx=1.
\]
We further observe that the boundedness of $\Omega$ implies that 
\[
\lambda^s_p(\Omega)=\min_{u\in\mathcal{D}^{s,p}_0(\Omega)} \Big\{\|u\|^p_{\mathcal{D}^{s,p}_0(\Omega)}\, :\, \|u\|_{L^p(\Omega)}=1\Big\},
\]
and that any solution of this problem has norm $L^\infty(\Omega)$ bounded by a universal constant, see \cite[Theorem 3.3]{BLP}.
Thus, without loss of generality, we can also assume that 
\[
\|u_\varepsilon\|_{L^\infty(\Omega)}\le M,\qquad \mbox{ for } 0<\varepsilon\ll1.
\]
Since $E$ has null $(s,p)-$capacity, there exists $\varphi_\varepsilon\in C^\infty_0(\Omega)$ such that
\[
[\varphi_\varepsilon]_{W^{s,p}(\mathbb{R}^N)}^p<\varepsilon,\qquad \varphi_\varepsilon\ge 0\qquad \mbox{ and }\qquad \varphi_\varepsilon\ge 1_E.
\]
We set $\psi_\varepsilon=\varphi_\varepsilon/\|\varphi_\varepsilon\|_{L^\infty(\mathbb{R}^N)}$ and observe that $\|\varphi_\varepsilon\|_{L^\infty(\mathbb{R}^N)}\ge 1$.
The function $u_\varepsilon\,(1-\psi_\varepsilon)$ is admissible for the variational problem defining $\lambda^s_p(\Omega\setminus E)$, then by using the triangle inequality we have
\[
\begin{split}
\Big(\lambda^s_{p}(\Omega\setminus E)\Big)^\frac{1}{p}\le \frac{[u_\varepsilon\,(1-\psi_\varepsilon)]_{W^{s,p}(\mathbb{R}^N)}}{\|u_\varepsilon\,(1-\psi_\varepsilon)\|_{L^p(\Omega\setminus E)}}&\le \frac{[u_\varepsilon]_{W^{s,p}(\mathbb{R}^N)}\,\|1-\psi_\varepsilon\|_{L^\infty(\mathbb{R}^N)}}{\|u_\varepsilon\,(1-\psi_\varepsilon)\|_{L^p(\Omega\setminus E)}}\\
&+\frac{\|u_\varepsilon\|_{L^\infty}\,[\psi_\varepsilon]_{W^{s,p}(\mathbb{R}^N)}}{\|u_\varepsilon\,(1-\psi_\varepsilon)\|_{L^p(\Omega\setminus E)}}.
\end{split}
\]
From the first part of the proof, we know that $E$ has Lebesgue measure $0$, thus the $L^p$ norm over $\Omega\setminus E$ is the same as that over $\Omega$.
If we now take the limit as $\varepsilon$ goes to $0$ and use the properties of $u_\varepsilon$, together with\footnote{Observe that, from the first condition, we get that $\psi_\varepsilon$ converges to $0$ strongly in $L^p(\Omega)$, by Sobolev inequality. Since the family $\{u_\varepsilon\}$ is bounded in $L^\infty(\Omega)$, this is enough to infer
\[
\lim_{\varepsilon\to 0} \int_{\Omega} |u_\varepsilon|^p\,|1-\psi_\varepsilon|^p\,dx=1.
\]}
\[
[\psi_\varepsilon]_{W^{s,p}(\mathbb{R}^N)}<\frac{\varepsilon}{\|\varphi_\varepsilon\|_{L^\infty(\mathbb{R}^N)}}\le \varepsilon,
\]
and
\[
\lim_{\varepsilon \to 0}\|1-\psi_\varepsilon\|_{L^\infty(\mathbb{R}^N)}=\lim_{\varepsilon\to 0} \sup_{\mathbb{R}^N}\, (1-\psi_\varepsilon)\le 1,
\] 
we get
\[
\Big(\lambda^s_{p}(\Omega\setminus E)\Big)^\frac{1}{p}\le \Big(\lambda^s_{p}(\Omega)\Big)^\frac{1}{p}.
\]
The reverse inequality simply follows from the fact that $C^\infty_0(\Omega\setminus E)\subset C^\infty_0(\Omega)$, thus we get the conclusion when $\Omega$ is bounded.
\par
In order to remove the last assumption, we consider the sets $\Omega_R=\Omega\cap B_R(0)$. For $R$ large enough, this is a non-empty open bounded set and $E\Subset \Omega_R$ as well. We thus have
\[
\lambda^s_p(\Omega_R\setminus E)=\lambda^s_p(\Omega_R).
\]
By taking the limit\footnote{Such a limit exists by monotonicity.} as $R$ goes to $+\infty$, we get the desired conclusion in the general case as well.
\end{proof}
The previous result giving the link between the Poincar\'e constant and sets with null capacity {\it does not hold true} in the interpolation space $\mathcal{X}^{s,p}_0(\Omega)$. Indeed, we have the following result, which shows that the interpolation Poincar\'e constant is sensitive to removing sets with null $(s,p)-$capacity. 
\begin{lm}
\label{lm:unnatural}
Let $1<p<N$ and $0<s<1$. Let $\Omega\subset\mathbb{R}^N$ be an open set and $E\Subset\Omega$ a compact set such that
\[
\mathrm{int\,cap}_{s,p}(E)=0<\mathrm{cap}_p(E).
\]
Then we have
\[
\Lambda^s_{p}(\Omega\setminus E)>\Lambda^s_p(\Omega).
\]
\end{lm}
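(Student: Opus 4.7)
My plan is to exploit Corollary \ref{coro:uguali} to reduce the inequality between the two \emph{interpolation} Poincar\'e constants to a strict monotonicity statement for the first Dirichlet eigenvalue of the local $p$-Laplacian. That corollary applied to both $\Omega$ and $\Omega\setminus E$ yields
\[
s\,(1-s)\,\Lambda^s_p(\Omega)=\bigl(\lambda^1_p(\Omega)\bigr)^s \quad\text{and}\quad s\,(1-s)\,\Lambda^s_p(\Omega\setminus E)=\bigl(\lambda^1_p(\Omega\setminus E)\bigr)^s.
\]
Since $t\mapsto t^s$ is strictly increasing on $[0,+\infty)$, the desired inequality $\Lambda^s_p(\Omega\setminus E)>\Lambda^s_p(\Omega)$ is equivalent to
\[
\lambda^1_p(\Omega\setminus E)>\lambda^1_p(\Omega). \qquad (\dagger)
\]
The inclusion $C^\infty_0(\Omega\setminus E)\subset C^\infty_0(\Omega)$ trivially gives $\ge$ in $(\dagger)$; all the content is in strictness. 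Observe that, in sharp contrast with Proposition \ref{prop:capdim}, the capacity that controls $(\dagger)$ is the \emph{local} one, not $\mathrm{cap}_{s,p}$, and it is precisely this mismatch between the two capacities---made possible by the hypothesis $\mathrm{int\,cap}_{s,p}(E)=0<\mathrm{cap}_p(E)$---that produces the ``unnatural'' behavior of $\Lambda^s_p$.

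To prove $(\dagger)$ I would argue by contradiction, mimicking the localization used at the end of the proof of Proposition \ref{prop:capdim}: replace $\Omega$ by a bounded truncation $\Omega_R=\Omega\cap B_R(0)$ with $E\Subset\Omega_R$, and restrict further to the connected component of $\Omega_R$ containing $E$, so that I may assume $\Omega$ bounded and connected while preserving the supposed equality in $(\dagger)$. Taking an $L^p$-normalized minimizing sequence $\{v_n\}\subset C^\infty_0(\Omega\setminus E)$ for $\lambda^1_p(\Omega\setminus E)$, Rellich--Kondrachov compactness yields, along a subsequence, a strong $L^p$-limit $v\in\mathcal{D}^{1,p}_0(\Omega\setminus E)$; lower semicontinuity of the Dirichlet integral together with the equality assumption make $v$ a first eigenfunction of the Dirichlet $p$-Laplacian on the whole $\Omega$. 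Two classical facts now collide: by Harnack's inequality and the strong maximum principle, $|v|$ is bounded below by a positive constant on the compact set $E\Subset\Omega$; on the other hand, the membership $v\in\mathcal{D}^{1,p}_0(\Omega\setminus E)$ forces $v$ to vanish $\mathrm{cap}_p$-quasi-everywhere on $E$. These two facts are incompatible as soon as $\mathrm{cap}_p(E)>0$, yielding the contradiction.

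The reduction in the first paragraph is a purely formal consequence of Corollary \ref{coro:uguali}, so the real obstacle is $(\dagger)$. Its proof relies on the classical capacitary toolkit for the Dirichlet $p$-Laplacian---existence and up-to-sign uniqueness of first eigenfunctions on bounded connected open sets, their strict positivity via Harnack, and the fact that every function in $\mathcal{D}^{1,p}_0(U)$ vanishes $\mathrm{cap}_p$-quasi-everywhere off $U$---none of which is developed inside the paper itself and which I would therefore have to quote from standard references on nonlinear potential theory.
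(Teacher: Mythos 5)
Your proof follows exactly the paper's route: reduce via Corollary \ref{coro:uguali} to the strict inequality $\lambda^1_p(\Omega\setminus E) > \lambda^1_p(\Omega)$ for the local Poincar\'e constant. The paper simply asserts this last fact ``as a consequence of the fact that $E$ has positive $p$-capacity'' with no further justification, whereas you supply the standard contradiction argument via Rellich--Kondrachov, weak lower semicontinuity, Harnack/strong maximum principle for a first Dirichlet $p$-eigenfunction, and the $\mathrm{cap}_p$-quasi-everywhere vanishing of functions in $\mathcal{D}^{1,p}_0$ of an open set. So this is the same approach, with you filling in what the paper takes for granted.

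That said, the reduction step ``restrict further to the connected component of $\Omega_R$ containing $E$'' is too quick: $E$ need not lie in a single component of $\Omega_R$, and, more seriously, the component meeting $E$ need not be the one achieving the minimal eigenvalue, so restricting to it does not preserve the posited equality $\lambda^1_p(\Omega\setminus E)=\lambda^1_p(\Omega)$. In fact, for disconnected $\Omega$ the claimed strict inequality can fail outright: take $\Omega$ to be two disjoint balls of different radii with $E$ sitting inside the smaller one; removing $E$ leaves $\lambda^1_p$ unchanged because the minimum is attained on the larger ball. This is a deficiency in the Lemma's stated hypotheses (and in the paper's one-line justification), not an error you created, but since you elected to supply the omitted proof, the reduction to a connected bounded set should be reworked to argue about the minimizing component(s) explicitly, or a connectedness hypothesis should be added.
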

\begin{proof}
By Corollary \ref{coro:uguali}, we know that 
\[
\Lambda^s_{p}(\Omega\setminus E)=\Big(\lambda^1_p(\Omega\setminus E)\Big)^s \qquad \mbox{ and }\qquad \Lambda^s_p(\Omega)=\Big(\lambda^1_p(\Omega)\Big)^s.
\]
It is now sufficient to use that $\lambda^1_p(\Omega\setminus E)>\lambda^1_p(\Omega)$, as a consequence of the fact that $E$ has positive $p-$capacity.
\end{proof}
\begin{rem}
\label{rem:flat}
As an explicit example of the previous situation, we can take $s\,p<1$ and the $(N-1)-$dimensional set 
\[
F=[-a,a]^{N-1}\times\{0\}.
\]
Observe that $\mathrm{cap}_p(F)>0$ by \cite[Theorem 4, page 156]{EG}. On the other hand, we have
\[
\mathrm{int\,cap}_{s,p}(F)=0.
\]
Indeed, we set
\[
F_\varepsilon=\{x\in \mathbb{R}^N \, :\, \mathrm{dist}(x,F)<\varepsilon\}.
\]
We then take the usual sequence of Friedrichs mollifiers $\{\varrho_\varepsilon\}_{\varepsilon>0}\subset C^\infty_0(\mathbb{R}^N)$ and define
\[
\varphi_{\varepsilon}=1_{F_\varepsilon}\ast \varrho_\varepsilon\in C^\infty_0(\mathbb{R}^N).
\] 
Observe that by construction we have
\[
\varphi_\varepsilon\equiv 1 \mbox{ on } F_\varepsilon\qquad \mbox{ and }\qquad \varphi_\varepsilon\equiv 0\mbox{ on } \mathbb{R}^N\setminus F_{2\,\varepsilon}.
\]
By definition of $(s,p)-$capacity and using the interpolation estimate \eqref{mah!}, we get
\[
\begin{split}
\mathrm{cap}_{s,p}(F)&\le [\varphi_\varepsilon]_{W^{s,p}(\mathbb{R}^N)}^p\\
&\le C\,\left(\int_{Q} |\varphi_\varepsilon|^p\,dx\right)^{1-s}\,\left(\int_Q |\nabla \varphi_\varepsilon|^p\,dx\right)^s\\
&\le C\, \left(\int_{Q} |1_{F_\varepsilon}|^p\,dx\right)^{1-s}\,\left(\int_Q |1_{F_\varepsilon}|^p\,dx\right)^s \left(\int_Q |\nabla \varrho_\varepsilon|\,dx\right)^{s\,p}\\
&\le C\,|F_\varepsilon|\,\varepsilon^{-s\,p}\le C\,\varepsilon^{1-s\,p}.
\end{split}
\]
We then observe that the last quantity goes to $0$ as $\varepsilon$ goes to $0$, thanks to the fact that $s\,p<1$.
By Corollary \ref{coro:zerocap}, we have 
\[
\mathrm{int\, cap}_{s,p}(F)=\mathrm{cap}_{s,p}(F)=0.
\]
as desired. 
\end{rem}

\section{Double-sided estimates for Poincar\'e constants}
\label{sec:poincare}

We already observed that for an open set $\Omega\subset\mathbb{R}^N$ we have
\[
s\,(1-s)\,\Lambda^s_{p}(\Omega)=\Big(\lambda^1_p(\Omega)\Big)^s.
\]
We now want to compare $\lambda^1_p$ with the sharp Poincar\'e constant for the embedding $\mathcal{D}^{s,p}_0(\Omega)\hookrightarrow L^p(\Omega)$.
\begin{thm}
 \label{prop:doubleside}
Let $1<p<\infty$ and $0<s<1$. Let $\Omega\subset\mathbb{R}^N$ be an open set, then
\begin{equation}
\label{oneside}
s\,(1-s)\,\lambda^s_{p}(\Omega)\le 2^{p\,(1-s)}\,N\,\omega_N\,\Big(\lambda^1_{p}(\Omega)\Big)^s.
\end{equation}
If in addition: 
\begin{itemize}
\item $\Omega\subset\mathbb{R}^N$ is bounded with Lipschitz boundary, then we also have the reverse inequality
\begin{equation}
\label{twoside}
\frac{1}{C_1}\,\Big(\lambda^1_{p}(\Omega)\Big)^s\le s\,(1-s)\,\lambda^s_{p}(\Omega),
\end{equation}
where $C_1>0$ is the same constant as in Theorem \ref{thm:finally};
\vskip.2cm
\item $\Omega\subset\mathbb{R}^N$ is convex, then we also have the reverse inequality
\begin{equation}
\label{twosideconv}
\frac{1}{C_2}\,\Big(\lambda^1_{p}(\Omega)\Big)^s\le s\,(1-s)\,\lambda^s_{p}(\Omega),
\end{equation}
where $C_2$ is the universal constant given by 
\[
C_2=\frac{\Big(\lambda^1_p(B_1(0))\Big)^s}{\mathcal{C}},
\]
and $\mathcal{C}=\mathcal{C}(N,p)>0$ is the same constant as in the Hardy inequality for $\mathcal{D}^{s,p}_0(\Omega)$ (see \cite[Theorem 1.1]{BC}).
\end{itemize}
\end{thm}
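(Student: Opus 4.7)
The plan is to prove the three inequalities in the order listed, since each uses either an interpolation identity or a previously established comparison result.

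For the first inequality \eqref{oneside}, I would apply the interpolation inequality \eqref{mah!} to a test function $u\in C^\infty_0(\Omega)$ with $\|u\|_{L^p(\Omega)}=1$. That inequality gives
\[
s\,(1-s)\,\|u\|_{\mathcal{D}^{s,p}_0(\Omega)}^p\le 2^{p\,(1-s)}\,N\,\omega_N\,\|\nabla u\|_{L^p(\Omega)}^{s\,p}.
\]
Taking the infimum over such $u$ produces $s(1-s)\lambda^s_p(\Omega)$ on the left (since $[u]_{W^{s,p}(\mathbb{R}^N)}^p\ge \lambda^s_p(\Omega)$) and $2^{p(1-s)}N\omega_N(\lambda^1_p(\Omega))^s$ on the right, giving \eqref{oneside}.

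For the Lipschitz case \eqref{twoside}, the key observation is that we already have two separate facts: Poincar\'e's inequality for the interpolation space \eqref{poincare_inter}, namely $(\lambda^1_p(\Omega))^s\|u\|_{L^p(\Omega)}^p\le s(1-s)\|u\|_{\mathcal{X}^{s,p}_0(\Omega)}^p$, and the norm comparison of Theorem \ref{thm:finally}, namely $\|u\|^p_{\mathcal{X}^{s,p}_0(\Omega)}\le C_1\|u\|^p_{\mathcal{D}^{s,p}_0(\Omega)}$. Chaining these for $u\in C^\infty_0(\Omega)$ with $\|u\|_{L^p(\Omega)}=1$ and taking the infimum yields \eqref{twoside} immediately.

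The convex case \eqref{twosideconv} is the delicate one, since we need a \emph{universal} constant independent of the geometry of $\Omega$. The strategy is to bypass the interpolation space entirely and instead use the fractional Hardy inequality of \cite[Theorem 1.1]{BC}, which states
\[
\mathcal{C}\,\int_\Omega \frac{|u(x)|^p}{d_\Omega(x)^{s\,p}}\,dx\le s\,(1-s)\,[u]_{W^{s,p}(\mathbb{R}^N)}^p,
\]
with $d_\Omega(x)=\mathrm{dist}(x,\partial\Omega)$ and $\mathcal{C}=\mathcal{C}(N,p)>0$. Since $d_\Omega(x)\le R_\Omega$ for every $x\in\Omega$ by definition of the inradius, this gives
\[
\frac{\mathcal{C}}{R_\Omega^{s\,p}}\,\|u\|_{L^p(\Omega)}^p\le s\,(1-s)\,[u]_{W^{s,p}(\mathbb{R}^N)}^p,
\]
so $s(1-s)\lambda^s_p(\Omega)\ge \mathcal{C}\,R_\Omega^{-sp}$. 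To relate the right-hand side to $(\lambda^1_p(\Omega))^s$, I would invoke monotonicity of the Dirichlet eigenvalue together with the obvious inclusion $B_{R_\Omega}(x_0)\subset\Omega$ (where $x_0$ realizes the inradius), plus the scaling $\lambda^1_p(B_R(0))=R^{-p}\lambda^1_p(B_1(0))$. This gives $\lambda^1_p(\Omega)\le R_\Omega^{-p}\,\lambda^1_p(B_1(0))$, hence $(\lambda^1_p(\Omega))^s\le R_\Omega^{-sp}\,(\lambda^1_p(B_1(0)))^s$. Combining the two inequalities,
\[
(\lambda^1_p(\Omega))^s\le \frac{(\lambda^1_p(B_1(0)))^s}{\mathcal{C}}\,\cdot\,s\,(1-s)\,\lambda^s_p(\Omega),
\]
which is precisely \eqref{twosideconv} with the declared $C_2$. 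The main obstacle is conceptual rather than technical: one must realize that the universality in the convex case comes from bypassing the interpolation space (whose norm-comparison constant depends on $\mathcal{E}(\Omega)$ through Theorem \ref{thm:convex}) and instead feeding the geometry-free Hardy inequality directly into the inradius bound $B_{R_\Omega}\subset\Omega$.
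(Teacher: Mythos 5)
Your proposal is correct and follows essentially the same route as the paper's proof. The only cosmetic differences are that in the Lipschitz case you chain \eqref{poincare_inter} and Theorem \ref{thm:finally} directly where the paper factors the same content through $\Lambda^s_p(\Omega)$ and Corollary \ref{coro:uguali}, and in the convex case you derive the bound $s(1-s)\lambda^s_p(\Omega)\ge \mathcal{C}\,R_\Omega^{-sp}$ from the Hardy inequality rather than citing it directly from \cite[Corollary 5.1]{BC}; both are equivalent to the paper's argument.
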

\begin{proof}
The first inequality \eqref{oneside} is a direct consequence of the interpolation inequality \eqref{mah!}. Indeed, by using the definition of $\lambda^s_{p}(\Omega)$, we obtain from this inequality
\[
s\,(1-s)\,\lambda^s_{p}(\Omega)\,\|u\|^p_{L^p(\Omega)}\le C\, \|u\|_{L^p(\Omega)}^{(1-s)\,p}\,\|\nabla u\|_{L^p(\Omega)}^{s\,p},
\]
for every $u\in C^\infty_0(\Omega)$. By simplifying the factor $\|u\|^p_{L^p(\Omega)}$ on both sides and taking the infimum over $C^\infty_0(\Omega)$, we get the claimed inequality.
\vskip.2cm\noindent
In order to prove \eqref{twoside}, for every $\varepsilon>0$ we take $\varphi\in C^\infty_0(\Omega)$ such that 
\[
\frac{\|\varphi\|_{\mathcal{D}^{s,p}_0(\Omega)}^p}{\|\varphi\|^p_{L^p(\Omega)}}<\lambda^s_p(\Omega)+\varepsilon,
\]
then we use Theorem \ref{thm:finally} to infer
\[
\frac{1}{C_1}\,\frac{\|\varphi\|_{\mathcal{X}^{s,p}_0(\Omega)}^p}{\|\varphi\|^p_{L^p(\Omega)}}<\lambda^s_p(\Omega)+\varepsilon.
\]
This in turn implies 
\[
\frac{1}{C_1}\,\Lambda^s_p(\Omega)\le \lambda^s_p(\Omega),
\]
by arbitrariness of $\varepsilon>0$. A further application of Corollary \ref{coro:uguali} leads to the desired conclusion.
\vskip.2cm\noindent
Finally, if $\Omega\subset\mathbb{R}^N$ is convex, we can proceed in a different way. We first observe that we can always suppose that $R_\Omega<+\infty$, otherwise both $\lambda^1_p(\Omega)$ and $\lambda^s_p(\Omega)$ vanish and there is nothing to prove.
Then \eqref{twosideconv} comes by joining the simple estimate
\[
\lambda^1_{p}(\Omega)\le \frac{\lambda^1_p(B_1(0))}{R_\Omega^p},
\]
which follows from the monotonicity and scaling properties of $\lambda^1_p$, and the estimate of \cite[Corollary 5.1]{BC}, i.e.
\[
s\,(1-s)\,\lambda^s_p(\Omega)\ge \frac{\mathcal{C}}{R_\Omega^{s\,p}}.
\]
The latter is a consequence of the Hardy inequality in convex sets for $\mathcal{D}^{s,p}_0$.
\end{proof}

\begin{rem}
For $p=2$, the double-sided estimate of Theorem \ref{prop:doubleside} is contained in \cite[Theorem 4.5]{CS}. The proof in \cite{CS} relies on probabilistic techniques. In \cite{CS} the result is proved by assuming that $\Omega$ verifies a uniform exterior cone condition. 
\end{rem}

\begin{rem}
\label{rem:poinpoin}
Inequality \eqref{twoside} can not hold for a general open set $\Omega\subset\mathbb{R}^N$, with a constant independent of $\Omega$. Indeed, one can construct a sequence $\{\Omega_n\}_{n\in\mathbb{N}}\subset\mathbb{R}^N$ such that
\[
\lim_{n\to\infty}\frac{\Big(\lambda^1_{p}(\Omega_n)\Big)^s}{\lambda^s_{p}(\Omega_n)}=+\infty,\qquad \mbox{ for } 1<p<\infty \mbox{ and } s< \frac{1}{p},
\]
see Lemma \ref{lm:bas} below.
\end{rem}

\appendix

\section{An example} 
\label{sec:example} 
 
In this section, we construct a sequence of open bounded sets $\{\Omega_n\}_{n\in\mathbb{N}}\subset\mathbb{R}^N$ with rough boundaries and fixed diameter, such that we have
\begin{equation}
\label{counter2}
\lim_{n\to\infty}\frac{\Big(\lambda^1_p(\Omega_n)\Big)^s}{\lambda^s_{p}(\Omega_n)}=+\infty,\qquad \mbox{ for } 1<p<\infty \mbox{ and } s< \frac{1}{p}.
\end{equation}
The sets $\Omega_n$ are obtained by removing from an $N-$dimensional cube an increasing array of regular $(N-1)-$dimensional cracks.
\vskip.2cm\noindent
For $N\ge 1$, we set\footnote{For $N=1$, the set $F$ simply coincides with the point $\{0\}$.}
\[
Q=\left[-\frac{1}{2},\frac{1}{2}\right]^N\qquad \mbox{ and }\qquad F=\left[-\frac{1}{4},\frac{1}{4}\right]^{N-1}\times\{0\}.
\]
For every $n\in \mathbb{N}$, we also define
\[
\mathbb{Z}^N_n=\Big\{z=(z_1,\dots,z_N)\in\mathbb{Z}^N\, :\, \max\{|z_1|,\dots,|z_N|\}\le n\Big\}.
\]
Finally, we consider the sets
\[
\Omega=Q\setminus F,\qquad \widetilde\Omega_n=\bigcup_{z\in\mathbb{Z}^N_n} \Big(\Omega+z\Big)=\left[-n-\frac{1}{2},n+\frac{1}{2}\right]^N\setminus \bigcup_{z\in\mathbb{Z}^N_n} (F+z),
\]
and
\[
E=\bigcup_{n\in\mathbb{N}} \widetilde\Omega_n=\mathbb{R}^N\setminus \bigcup_{z\in\mathbb{Z}^N} (F+z).
\]
Then \eqref{counter2} is a consequence of the next result.
\begin{figure}
\includegraphics[scale=.35]{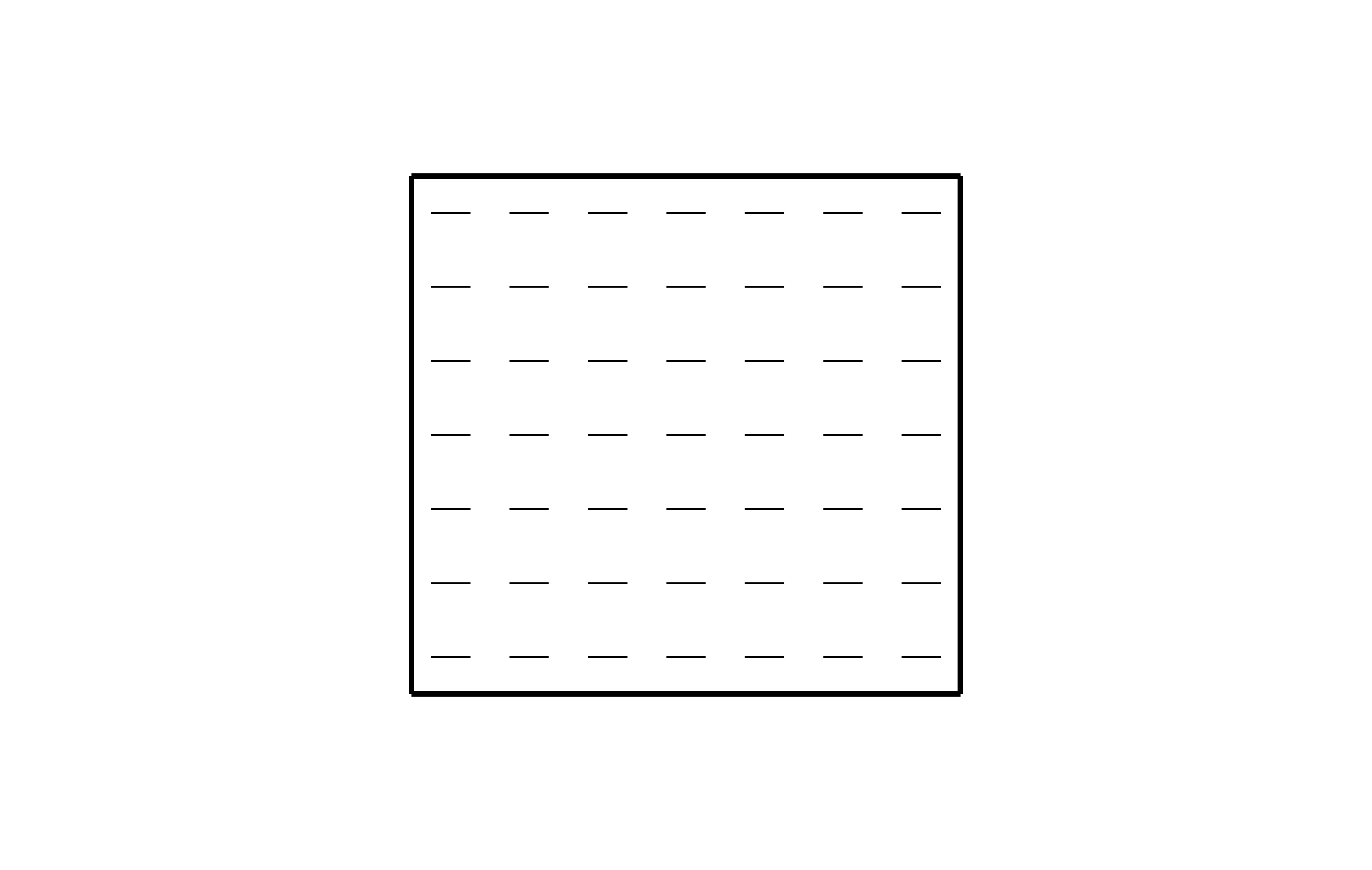}
\caption{The set $\Omega_n$ in dimension $N=2$, for $n=3$.}
\end{figure}
\begin{lm}
\label{lm:bas}
With the notation above, for $1<p<\infty$ and $s<1/p$ we have
\begin{equation}
\label{bottom}
\lambda^1_{p}(\widetilde\Omega_n)\ge C=C(N,p,F)>0,\qquad \mbox{ for every }n\in\mathbb{N},
\end{equation}
and
\begin{equation}
\label{up}
\lim_{n\to\infty} \lambda^s_{p}(\widetilde\Omega_n)=0.
\end{equation}
In particular, the new sequence of rescaled sets $\{\Omega_n\}_{n\in\mathbb{N}}\subset\mathbb{R}^N$ defined by
\[
\Omega_n=|\widetilde\Omega_n|^{-\frac{1}{N}}\,\widetilde\Omega_n=\left[-\frac{1}{2},\frac{1}{2}\right]^N\setminus \bigcup_{z\in\mathbb{Z}^N_n} \frac{(F+z)}{2\,n+1},
\]
is such that
\[
\mathrm{diam}(\Omega_n)=\sqrt{N}, \mbox{ for every }n\in\mathbb{N}\qquad \mbox{ and }\qquad\lim_{n\to\infty}\frac{\left(\lambda^1_{p}(\Omega_n)\right)^s}{\lambda^s_{p}(\Omega_n)}=+\infty.
\]
\end{lm}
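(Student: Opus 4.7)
The plan is to establish \eqref{bottom} and \eqref{up} separately and then derive the last claim via the scaling laws $\lambda^1_p(\tau\Omega)=\tau^{-p}\lambda^1_p(\Omega)$ and $\lambda^s_p(\tau\Omega)=\tau^{-sp}\lambda^s_p(\Omega)$. For \eqref{bottom}, the key is to exploit the cellular periodic structure of $\widetilde\Omega_n$. Introduce the trace-type Poincar\'e constant
\[
\mu:=\inf\left\{\int_Q |\nabla v|^p\,dx\, :\, v\in W^{1,p}(Q),\ v|_F=0\text{ in the trace sense},\ \int_Q |v|^p\,dx=1\right\},
\]
which is strictly positive by a standard contradiction argument combining Rellich compactness, the continuity of the trace $W^{1,p}(Q)\to L^p(F)$ (valid for $p>1$, since $F$ is an $(N-1)$-dimensional flat face of positive $\mathcal{H}^{N-1}$-measure), and the fact that only constants have vanishing gradient. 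For any $u\in C^\infty_0(\widetilde\Omega_n)$, its restriction to each cell $Q+z$, $z\in\mathbb{Z}^N_n$, lies in $W^{1,p}(Q+z)$ and vanishes on $F+z$; applying the per-cell inequality and summing over $z$ yields $\int_{\widetilde\Omega_n}|u|^p\,dx\le \mu^{-1}\int_{\widetilde\Omega_n}|\nabla u|^p\,dx$, hence $\lambda^1_p(\widetilde\Omega_n)\ge\mu$.

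For the upper estimate \eqref{up}, the construction combines a large-scale bump with a small-scale cutoff around the cracks. Fix $\phi\in C^\infty_0((-1/2,1/2)^N)$ with $\phi\equiv 1$ on $(-1/4,1/4)^N$ and $0\le\phi\le 1$, and set $\phi_n(x):=\phi(x/n)$. For $\varepsilon\in(0,1/4)$, choose a $\mathbb{Z}^N$-periodic cutoff $\chi_\varepsilon\in C^\infty(\mathbb{R}^N)$ with $0\le\chi_\varepsilon\le 1$, equal to $1$ on the $\varepsilon$-neighborhood of $\bigcup_{z\in\mathbb{Z}^N}(F+z)$, equal to $0$ outside the $2\varepsilon$-neighborhood, and satisfying $|\nabla\chi_\varepsilon|\le C/\varepsilon$. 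The test function
\[
u_n:=\phi_n\,(1-\chi_{\varepsilon_n})
\]
then belongs to $C^\infty_0(\widetilde\Omega_n)$, and for $\varepsilon_n$ sufficiently small one has $\|u_n\|_{L^p}^p\ge c\,n^N$, since $u_n\equiv 1$ on $(-n/4,n/4)^N$ minus an $O(n^N\varepsilon_n)$-measure tubular neighborhood of the cracks.

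The heart of the argument is the estimate of $[u_n]_{W^{s,p}(\mathbb{R}^N)}^p$. Writing $u_n=\phi_n-\phi_n\chi_{\varepsilon_n}$ and applying the quasi-triangle inequality,
\[
[u_n]_{W^{s,p}(\mathbb{R}^N)}^p\le 2^{p-1}\Big([\phi_n]_{W^{s,p}(\mathbb{R}^N)}^p+[\phi_n\chi_{\varepsilon_n}]_{W^{s,p}(\mathbb{R}^N)}^p\Big).
\]
Scaling yields $[\phi_n]_{W^{s,p}(\mathbb{R}^N)}^p=n^{N-sp}\,[\phi]_{W^{s,p}(\mathbb{R}^N)}^p$. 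For the second piece, the explicit size and derivatives of $\phi_n\chi_{\varepsilon_n}$ on its support (an $O(n^N)$ collection of tubes of radius $2\varepsilon_n$ around the cracks) give
\[
\|\phi_n\chi_{\varepsilon_n}\|_{L^p}^p\le C\,n^N\,\varepsilon_n,\qquad \|\nabla(\phi_n\chi_{\varepsilon_n})\|_{L^p}^p\le C\,n^N\,\varepsilon_n^{1-p},
\]
and plugging these into the interpolation inequality \eqref{mah!} produces
\[
[\phi_n\chi_{\varepsilon_n}]_{W^{s,p}(\mathbb{R}^N)}^p\le C\,\bigl(n^N\varepsilon_n\bigr)^{1-s}\,\bigl(n^N\varepsilon_n^{1-p}\bigr)^{s}=C\,n^N\,\varepsilon_n^{\,1-sp}.
\]
Since $sp<1$ the exponent $1-sp$ is strictly positive, and choosing for instance $\varepsilon_n=1/n$ gives
\[
\lambda^s_p(\widetilde\Omega_n)\le \frac{[u_n]_{W^{s,p}(\mathbb{R}^N)}^p}{\|u_n\|_{L^p}^p}\le C\,\bigl(n^{-sp}+\varepsilon_n^{\,1-sp}\bigr)\to 0.
\]
The main obstacle lies precisely in this balancing step: the microscopic cutoff $\chi_\varepsilon$ has a singular gradient of order $1/\varepsilon$, and only the hypothesis $sp<1$ allows \eqref{mah!} to absorb it with a surplus that vanishes as $\varepsilon\to 0$. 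Finally, the last assertion follows by rescaling: since $|\widetilde\Omega_n|=(2n+1)^N$, one has $\Omega_n=(2n+1)^{-1}\widetilde\Omega_n$ and $\mathrm{diam}(\Omega_n)=\sqrt{N}$, and the scaling laws give
\[
\frac{\bigl(\lambda^1_p(\Omega_n)\bigr)^s}{\lambda^s_p(\Omega_n)}=\frac{\bigl(\lambda^1_p(\widetilde\Omega_n)\bigr)^s}{\lambda^s_p(\widetilde\Omega_n)}\ge\frac{\mu^s}{\lambda^s_p(\widetilde\Omega_n)}\to+\infty.
\]
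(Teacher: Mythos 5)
Your proof of \eqref{bottom} is the same idea as the paper's: a per-cell trace-Poincar\'e inequality on $Q$ summed over the translates $Q+z$ (the paper derives positivity of $\mu_p(Q;F)$ from Maz'ya's capacitary Poincar\'e inequality and $\mathrm{cap}_p(F;Q)>0$, while you derive it via Rellich compactness and trace theory---both are standard and equivalent in spirit).

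Your proof of \eqref{up}, however, takes a genuinely different route. The paper does not build test functions at all: it invokes Remark \ref{rem:flat} and Proposition \ref{prop:submodular} to show that the union of cracks $\bigcup_{z\in\mathbb{Z}^N_n}(F+z)$ has zero $(s,p)$-capacity (precisely because $sp<1$), and then applies Proposition \ref{prop:capdim} to conclude that this set is invisible to $\lambda^s_p$, giving the exact identity
\[
\lambda^s_p(\widetilde\Omega_n)=\lambda^s_p\!\left(\left[-n-\tfrac12,n+\tfrac12\right]^N\right)=(2n+1)^{-sp}\,\lambda^s_p(Q)\longrightarrow 0.
\]
You instead construct an explicit competitor $u_n=\phi_n(1-\chi_{\varepsilon_n})$, factor the Gagliardo seminorm through a large-scale bump and a microscopic collar around the cracks, and control the collar via the interpolation inequality \eqref{mah!}, where the hypothesis $sp<1$ shows up as the sign of the exponent $1-sp$ in $\varepsilon_n^{1-sp}$. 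Your computation is correct: the $L^p$ and gradient bounds for $\phi_n\chi_{\varepsilon_n}$ are right, and taking $\varepsilon_n\to0$ yields $\lambda^s_p(\widetilde\Omega_n)\le C(n^{-sp}+\varepsilon_n^{1-sp})\to 0$. What each approach buys: the paper's argument is shorter once the capacity machinery of Section \ref{sec:capacities} is in place, and in fact yields the sharp rate $(2n+1)^{-sp}$; your argument is more self-contained and concrete (it needs only \eqref{mah!}) and makes the role of $sp<1$ completely transparent as the threshold at which a thin collar around a codimension-one crack becomes free in the fractional seminorm. Both are valid. One minor thing worth stating explicitly in your write-up is that for $\varepsilon$ small enough (say $\varepsilon<1/8$) the $2\varepsilon$-neighborhoods of distinct cracks are pairwise disjoint, which is what permits the smooth $\mathbb{Z}^N$-periodic cutoff $\chi_\varepsilon$ to exist with the stated properties.
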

\begin{proof}
We divide the proof in two parts, for ease of readability. Of course, it is enough to prove \eqref{bottom} and \eqref{up}. Indeed, the last statement is a straightforward consequence of these facts and of the scaling properties of the diameter and of the Poincar\'e constants.
\vskip.2cm\noindent
{\bf Proof of \eqref{bottom}}. For $1<p<\infty$ we define
\[
\mu_p(Q;F)=\min_{u\in W^{1,p}(Q)\setminus\{0\}}\left\{\frac{\displaystyle\int_{Q}|\nabla u|^p\,dx}{\displaystyle \int_{Q}|u|^p\,dx}\, :\, u=0 \mbox{ on } F\right\}.
\]
We first observe that $F$ is a compact set with positive $(N-1)-$dimensional Hausdorff measure, thus by \cite[Theorem 4, page 156]{EG} we have 
\[
\mathrm{cap}_p(F;Q)=\inf_{u\in C^\infty_0(Q)}\left\{\int_Q |\nabla u|^p\,dx\, :\, u\ge 1_F\right\}>0,\qquad \mbox{ for every } 1<p<\infty. 
\]
We can thus infer existence of a constant  $C=C(N,p,F)>0$ such that
\[
\frac{1}{C}\,\int_{Q} |u|^p\,dx\le \int_Q |\nabla u|^p\,dx,\qquad \mbox{ for every }u\in W^{1,p}(Q)\mbox{ such that } u=0 \mbox{ on } F,
\]
see \cite[Theorem 10.1.2]{Ma}.
This shows that $\mu_p(Q;F)>0$.
\par
For every $\varepsilon>0$, we consider $u_{\varepsilon}\in C_0^\infty(E)\setminus\{0\}$ such that
\[
\lambda^1_{p}(E)+\varepsilon> \frac{\displaystyle\int_{E}|\nabla u_\varepsilon|^p\,dx}{\displaystyle \int_{E}|u_\varepsilon|^p\,dx}.
\]
We now observe that for every $z\in\mathbb{Z}^N$, there holds
\[
\int_{Q+z} |\nabla u_\varepsilon|^p\,dx\ge \mu(Q,F)\,\int_{Q+z} |u_\varepsilon|^p\,dx,
\]
thanks to the fact that $u_n$ vanishes on (the relevant translated copy of) $F$ and to the fact that $\mu_p(Q,F)=\mu_p(Q+z,F+z)$. By using this information, we get
\[
\begin{split}
\int_{E}|\nabla u_\varepsilon|^p\,dx&=\sum_{z\in\mathbb{Z}^N} \int_{Q+z} |\nabla u_\varepsilon|^p\,dx\\
&\ge \mu_p(Q,F)\,\sum_{z\in\mathbb{Z}^N}\int_{Q+z} |u_\varepsilon|^p\,dx=\mu_p(Q,F)\, \int_{E} |u_\varepsilon|^p\,dx.
\end{split}
\]
By recalling the choice of $u_\varepsilon$, we then get
\[
\lambda^1_{p}(E)+\varepsilon\ge \mu_p(Q;F).
\]
Thanks to the arbitrariness of $\varepsilon>0$ and to the fact that $\widetilde\Omega_n\subset E$, this finally gives
\begin{equation}
\label{E}
\lambda^1_{p}(\widetilde\Omega_n)\ge \lambda^1_{p}(E)\ge \mu_p(Q;F),\qquad \mbox{ for every }n\in\mathbb{N},
\end{equation}
as desired.
\vskip.2cm\noindent
{\bf Proof of \eqref{up}}. We recall that
\[
\widetilde\Omega_n=\bigcup_{z\in\mathbb{Z}^N_n} \Big(\Omega+z\Big)=\left[-n-\frac{1}{2},n+\frac{1}{2}\right]^N\setminus \bigcup_{z\in\mathbb{Z}^N_n} (F+z),
\]
and that each $(N-1)-$dimensional set $F+z$ has null $(s,p)-$capacity, thanks to Remark \ref{rem:flat}. By using Proposition \ref{prop:submodular}, we also obtain
\[
\mathrm{cap}_{s,p}\left(\bigcup_{z\in\mathbb{Z}^N_n} (F+z)\right)=0.
\]
Then by Proposition \ref{prop:capdim}, we get
\[
\lambda^s_p(\widetilde\Omega_n)=\lambda^s_p\left(\left[-n-\frac{1}{2},n+\frac{1}{2}\right]^N\right)=(2\,n+1)^{-s\,p}\,\lambda^{s}_p(Q).
\]
This is turn gives the desired conclusion \eqref{up}.
\end{proof}

\section{One-dimensional Hardy inequality}

We used the following general form of the one-dimensional Hardy inequality (the classical case corresponds to $\alpha=p-1$ below). This can be found for example in \cite{Ma}. For the sake of completeness, we give a sketch of a proof based on {\it Picone's inequality}.\footnote{For $u,v$ differentiable functions with $v\ge 0$ and $u>0$, we have the pointwise inequality
\[
|u'|^{p-2}\,u'\,\left(\frac{v^p}{u^{p-1}}\right)'\le |v'|^p.
\]}
\begin{lm}
\label{lm:hardy1D}
Let $1<p<\infty$ and $\alpha>0$. For every $f\in C^\infty_0((0,T])$ we have
\begin{equation}
\label{hardy}
\left(\frac{\alpha}{p}\right)^p\,\int_0^T \frac{|f(t)|^p}{t^\alpha}\,\frac{dt}{t}\le \int_0^T \frac{|f'(t)|^p}{t^\alpha}\,t^p\,\frac{dt}{t}.
\end{equation}
\end{lm}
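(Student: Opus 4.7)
The plan is to reduce the claim to Picone's inequality by a careful choice of comparison function and integration weight. First, I would apply Picone with $v = |f|$ (which is Lipschitz with $|v'| = |f'|$ almost everywhere, since $f$ is smooth) and the comparison function $u(t) = t^{\alpha/p}$, which is positive on $(0,T]$. Picone gives the pointwise inequality
\[
|u'|^{p-2}\,u'\,\left(\frac{v^p}{u^{p-1}}\right)' \le |v'|^p \qquad \text{a.e. on } (0,T).
\]
I would then multiply both sides by the weight $\rho(t) = t^{p-\alpha-1}$ and integrate over $(0,T)$.

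On the left-hand side, the natural step is integration by parts:
\[
\int_0^T \rho\,|u'|^{p-2}u'\,\left(\frac{v^p}{u^{p-1}}\right)' dt = \left[\rho\,|u'|^{p-2}u'\,\frac{v^p}{u^{p-1}}\right]_0^T - \int_0^T \frac{v^p}{u^{p-1}}\,\Big(\rho\,|u'|^{p-2}u'\Big)'\,dt.
\]
Since $f\in C^\infty_0((0,T])$, the function $v^p$ vanishes identically in a neighborhood of $0$, so the boundary contribution at $t=0$ is zero. A direct computation shows that at $t=T$ the boundary term equals $(\alpha/p)^{p-1}\,T^{-\alpha}\,|f(T)|^p$, which is non-negative and may therefore be discarded after moving it to the appropriate side.

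The key algebraic identity, which I see as the main computational step, is that with $u(t) = t^{\alpha/p}$ and $\rho(t) = t^{p-\alpha-1}$ one has
\[
\frac{\bigl(\rho\,|u'|^{p-2}u'\bigr)'}{u^{p-1}} = -\left(\frac{\alpha}{p}\right)^{\!p} t^{-\alpha-1}.
\]
Substituting this into the integration-by-parts formula and combining with Picone's pointwise bound gives
\[
\left(\frac{\alpha}{p}\right)^{\!p}\int_0^T \frac{|f|^p}{t^{\alpha+1}}\,dt \le \int_0^T |f'|^p\,t^{p-\alpha-1}\,dt,
\]
which is precisely \eqref{hardy} after rewriting both sides in terms of the singular measure $dt/t$. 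The only (mild) obstacle is verifying the identity above and justifying that $\alpha/p$ is the correct exponent: a brief one-variable optimization of the coefficient $\beta^{p-1}\bigl(\alpha - \beta(p-1)\bigr)$ over $\beta>0$ singles out $\beta = \alpha/p$ as the unique maximizer, yielding exactly the sharp constant $(\alpha/p)^p$.
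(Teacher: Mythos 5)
Your proof is correct and follows essentially the same route as the paper's: both rest on Picone's inequality applied to the power comparison function $\varphi(t)=t^{\beta}$ with the weight $t^{p-\alpha-1}$, with the $K$-functional of the argument being the weak formulation / integration by parts. The paper keeps $\beta$ free, regularizes the test function by an $\varepsilon$ before passing to the limit via Fatou, and optimizes over $\beta$ at the very end, whereas you commit directly to $\beta=\alpha/p$ and integrate by parts, discarding the (nonnegative) boundary term at $t=T$ — cosmetic variations of one and the same argument.
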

\begin{proof}
We take $0<\beta<\alpha/(p-1)$ and consider the function $\varphi(t)=t^\beta$. Observe that this solves
\[
\begin{split}
-\big(|\varphi'(t)|^{p-2}\,\varphi'(t)\,t^{p-\alpha-1}\big)'&=\beta^{p-1}\,(\alpha-\beta\,(p-1))\,t^{\beta\,(p-1)-\alpha-1}\\
&=\beta^{p-1}\,(\alpha-\beta\,(p-1))\,t^{-\alpha-1}\,\varphi(t)^{p-1}.
\end{split}
\]
Thus, for every $\psi\in C_0^\infty((0,T])$ we have the weak formulation
\[
\beta^{p-1}\,(\alpha-\beta\,(p-1))\,\int_0^T \frac{\varphi^{p-1}}{t^\alpha}\,\psi\,\frac{dt}{t}=\int_0^T \frac{|\varphi'|^{p-2}\,\varphi'}{t^\alpha}\,\psi'\,t^p\,\frac{dt}{t}.
\]
We take $\varepsilon>0$ and $f\in C^\infty_0((0,T])$ non-negative, we insert the test function 
\[
\psi=\frac{f^p}{(\varepsilon+\varphi)^{p-1}},
\]
in the previous integral identity. By using Picone's inequality, we then obtain
\[
\begin{split}
\beta^{p-1}\,(\alpha-\beta\,(p-1))\,\int_0^T \frac{\varphi^{p-1}}{(\varepsilon+\varphi)^{p-1}}\,\frac{f^p}{t^\alpha}\,\frac{dt}{t}&=\int_0^T \frac{|\varphi'|^{p-2}\,\varphi'}{t^\alpha}\,\left(\frac{f^p}{(\varepsilon+\varphi)^{p-1}}\right)'\,t^p\,\frac{dt}{t}\\
&\le \int_0^T \frac{|f'(t)|^p}{t^\alpha}\,t^p\,\frac{dt}{t}.
\end{split}
\]
If we take the limit as $\varepsilon$ goes to $0$, by Fatou's Lemma we get
\[
\beta^{p-1}\,(\alpha-\beta\,(p-1))\,\int_0^T \frac{f^p}{t^\alpha}\,\frac{dt}{t}\le \int_0^T \frac{|f'(t)|^p}{t^\alpha}\,t^p\,\frac{dt}{t}.
\]
The previous inequality holds true for every $0<\beta<\alpha/(p-1)$ and $\beta^{p-1}\,(\alpha-\beta\,(p-1))$ is maximal for $\beta=\alpha/p$. This concludes the proof.
\end{proof}

\section{A geometric lemma}

When comparing the norms of $\mathcal{X}^{s,p}_0(\Omega)$ and $\mathcal{D}^{s,p}_0(\Omega)$ for a convex set, we used the following geometric result. We recall that 
\[
R_\Omega=\sup_{x\in\Omega} \mathrm{dist}(x,\partial\Omega),
\]
is the {\it inradius of $\Omega$}, i.e. the radius of the largest ball inscribed in $\Omega$.
\begin{lm}
\label{lm:convex}
Let $\Omega\subset\mathbb{R}^N$ be an open convex set such that $R_\Omega<+\infty$. Let $x_0\in\Omega$ be a point such that
\[
\mathrm{dist}(x_0,\partial\Omega)=R_\Omega. 
\]
Then for every $0<t<1$ we have
\[
\mathrm{dist}\big(x_0+t\,(\Omega-x_0),\partial\Omega\big)\ge (1-t)\,R_\Omega.
\]
\end{lm}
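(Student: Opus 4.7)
\medskip

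The plan is to use convexity of $\Omega$ together with the defining property of $x_0$, namely that $B_{R_\Omega}(x_0) \subset \Omega$. The key geometric observation is that any point of the form $y = x_0 + t(x-x_0)$ with $x \in \Omega$ can be perturbed by a vector $w$ with $|w| < (1-t)R_\Omega$ and still remain in $\Omega$; once this is shown, the distance estimate follows immediately.

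Concretely, I would fix $0 < t < 1$ and take an arbitrary point of the form $y = x_0 + t(x - x_0) = (1-t)x_0 + tx$ with $x \in \Omega$. To prove $\mathrm{dist}(y, \partial\Omega) \geq (1-t)R_\Omega$, it suffices to show $B_{(1-t)R_\Omega}(y) \subset \Omega$. Pick any $w \in \mathbb{R}^N$ with $|w| < (1-t)R_\Omega$ and write
\[
y + w = (1-t)\left(x_0 + \frac{w}{1-t}\right) + t\,x.
\]
Since $|w/(1-t)| < R_\Omega$, the point $x_0 + w/(1-t)$ lies in $B_{R_\Omega}(x_0) \subset \Omega$. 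Also $x \in \Omega$. Convexity of $\Omega$ then yields $y + w \in \Omega$, as required.

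Since $y$ was an arbitrary element of $x_0 + t(\Omega - x_0)$, the bound $\mathrm{dist}(y,\partial\Omega) \geq (1-t)R_\Omega$ holds uniformly, giving the claim. There is no real obstacle here: the only delicate point is writing the convex combination correctly so that the coefficient $(1-t)$ multiplies the rescaled perturbation $w/(1-t)$ to produce $w$, and the other coefficient $t$ multiplies the point $x \in \Omega$. Everything else is immediate from the definitions.
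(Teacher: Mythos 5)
Your proof is correct, and it takes a genuinely different (and cleaner) route than the paper. You directly show that for each $y=(1-t)x_0+t\,x$ with $x\in\Omega$ the ball $B_{(1-t)R_\Omega}(y)$ sits inside $\Omega$, by writing $y+w=(1-t)\bigl(x_0+\tfrac{w}{1-t}\bigr)+t\,x$ and using convexity: this is a one-line convex-combination argument, with no case distinctions. The paper instead reduces (after translating $x_0$ to the origin) to boundary points $t\,z$ with $z\in\partial\Omega$, introduces the cone $C_z=\mathrm{conv}(B_{R_\Omega}(0)\cup\{z\})\subset\Omega$, and estimates $\mathrm{dist}(t\,z,\partial C_z)$ by splitting into the cases $|z|=R_\Omega$ and $|z|>R_\Omega$ and invoking similar-triangle considerations. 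Both arguments are elementary and give the same constant $(1-t)R_\Omega$; yours avoids the cone construction, the reduction to boundary points, and the case analysis entirely, and it applies verbatim whether or not $\Omega$ is bounded (the paper's reduction to $\partial(t\Omega)$ is also fine in the unbounded case, but requires a moment's thought). The only thing one might prefer about the paper's version is that the picture with the inscribed cone makes the lower bound visually transparent.
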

\begin{proof}
Without loss of generality, we can assume that $0\in \Omega$ and that $x_0=0$. Clearly, it is sufficient to prove that
\[
\mathrm{dist}\big(\partial(t\,\Omega),\partial\Omega\big)\ge (1-t)\,R_\Omega.
\]
Every point of $\partial(t\,\Omega)$ is of the form $t\,z$, with $z\in\partial\Omega$. We now take the cone $C_z$, obtained as the convex envelope of $B_{R_\Omega}(0)$ and the point $z$. 
By convexity of $\Omega$, we have of course $C_z\subset \Omega$.  We thus obtain
\begin{equation}
\label{dist}
\mathrm{dist}(t\,z,\partial\Omega)\ge \mathrm{dist}(t\,z,\partial C_z).
\end{equation}
We now distinguish two cases:
\begin{itemize}
\item[({\it i)}] $|z|=R_\Omega$;
\vskip.2cm
\item[{\it (ii)}] $|z|>R_\Omega$.
\end{itemize}
When alternative {\it i)} occurs, then $C_z=B_{R_\Omega}(0)$ and thus
\[
\mathrm{dist}(t\,z,\partial C_z)=\mathrm{dist}(t\,z,B_{R_\Omega}(0))=|t\,z-z|=(1-t)\,|z|=(1-t)\,R_\Omega.
\]
By using this in \eqref{dist}, we get the desired estimate.
\par
If on the contrary we are in case {\it ii)}, then by elementary geometric considerations we have 
\[
\frac{\mathrm{dist}(t\,z,\partial C_z)}{|t\,z-z|}=\frac{R_\Omega}{|z|},
\]
see Figure \ref{fig:convex}.
This gives again the desired conclusion.
\end{proof}
\begin{figure}
\includegraphics[scale=.4]{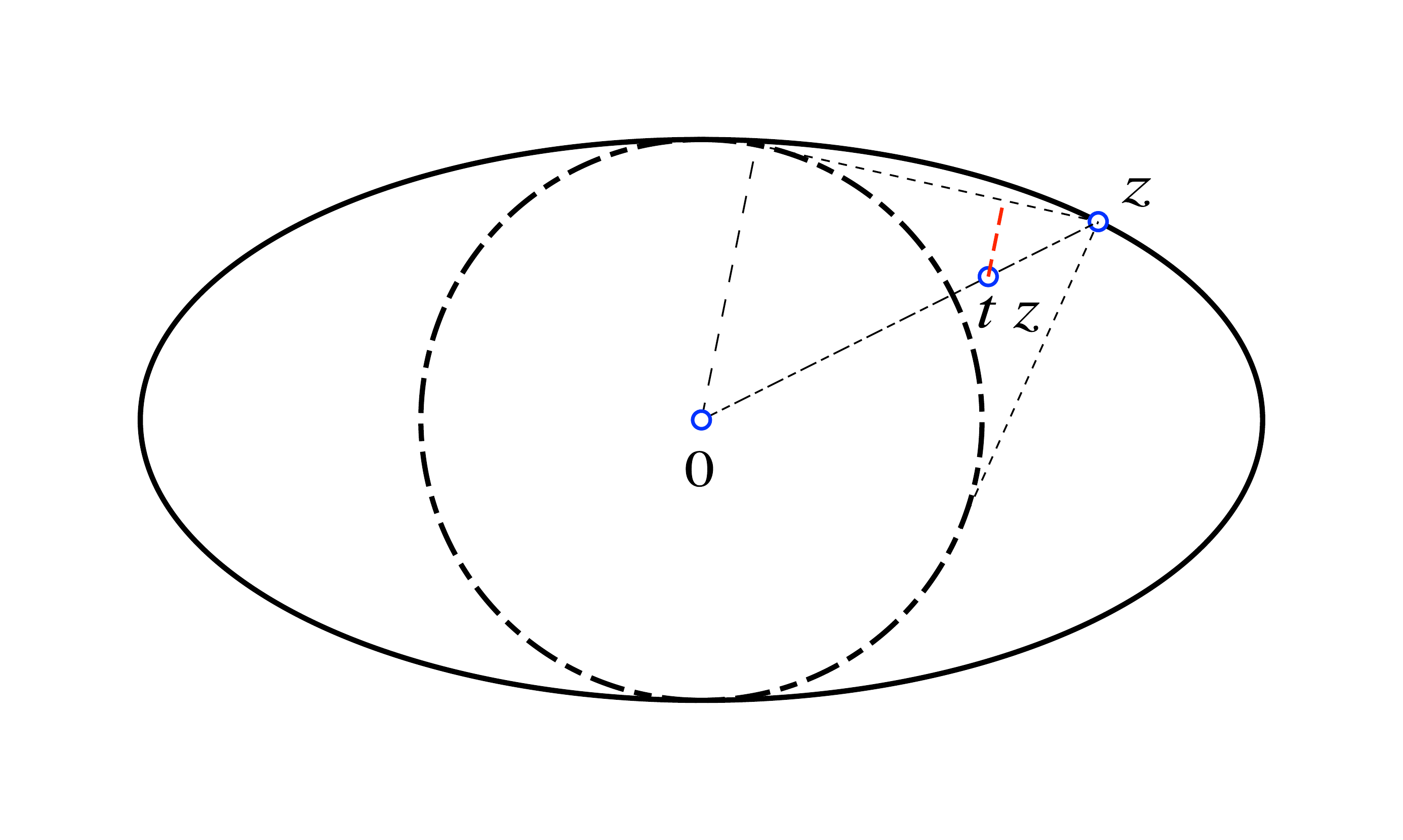}
\caption{The case {\it (ii)} in the proof of Lemma \ref{lm:convex}. Colored in red, the distance of $t\,z$ from $\partial C_z$.}
\label{fig:convex}
\end{figure}

\end{document}